\def\a{\alpha}
\def\b{\beta}
\def\d{\delta}
\def\g{\gamma}
\renewcommand{\to}{\longrightarrow}
\newcommand{\ov}{\overrightarrow}
\def\co{\colon\thinspace}
\newcommand{\Um}{\boldsymbol{\mu}}
\newcommand{\Ut}{\boldsymbol{\tau}}
\newcommand{\C}{\mathbb{C}}
\newcommand{\Z}{\mathbb{Z}}
\newcommand{\Q}{\mathbb{Q}}
\newcommand{\R}{\mathbb{R}}
\newcommand{\PGL}{\mathrm{PGL}}
\newcommand{\SLtwoC}{\mathrm{SL}(2,\C)}
\newcommand{\HH}{{\mathbb H}^2}
\newcommand{\X}{\mathfrak{X}}
\newcommand{\scc}{\mathscr{S}}
\DeclareMathOperator{\tr}{\mathrm{tr}}
\newtheorem{Theorem}{Theorem}[section]
\newtheorem{Lemma}[Theorem]{Lemma}
\newtheorem{Proposition}[Theorem]{Proposition}
\newtheorem{introthm}{Theorem}
\newtheorem{Definition}[Theorem]{Definition}
\newtheorem{Remark}[Theorem]{Remark}
\newtheorem{Example}[Theorem]{Example}
\title{Trace Systoles and Sink Constants}
\author{Fr\'{e}d\'{e}ric Palesi}
\address{Aix Marseille Universit\'{e}, CNRS, I2M, Marseille, France}
\email{frederic.palesi@univ-amu.fr}
\urladdr{www.i2m.univ-amu.fr/perso/frederic.palesi/}
\begin{document}

\begin{abstract}

Let $\Sigma$ be a surface with $\chi (\Sigma) < 0$, and a representation $\rho $ from the fundamental group $\pi_1 (\Sigma)$ into $ \rm{SL} (2 , \C)$. We define the \emph{trace systole} of $\rho$, denoted $\mathrm{tys} (\rho)$ as folows :
$$\mathrm{tys} (\rho) = \inf \left\{ | \tr (\rho (\gamma)) | \ , \ \gamma \in \pi_1 (S)  \mbox{ essential simple closed curve} \right\}$$
When $\Sigma$ is endowed with an hyperbolic structure, the trace systole of the holonomy representation is naturally related to the usual systolic length of the hyperbolic surface, which is one of the motivation for this study.
The function $\mathrm{tys}$ is bounded on relative character varieties of $\Sigma$, and in this article we compute explicitly the optimal bounds for the one-holed torus, the four-holed sphere and the non-orientable surface of genus $3$. The proofs rely on the correspondance between representations of these surface groups and so-called Markoff maps which were introduced by Bowditch. From this, we infer various consequences on the optimal systolic inequalities of certain hyperbolic manifolds and also on non-Fuchsian representations for these surfaces.

\end{abstract}

\maketitle

\section{Introduction}

Let $\Sigma$ be a surface of finite type with $\chi (\Sigma) < 0$, and denote by $\widehat{\Omega}$ the set of homotopy classes of unoriented essential simple closed curves on $\Sigma$. This set can be seen as a subset of $\pi_1 (\Sigma) / (x \sim x^{-1}) $. Motivated by the notion of systole of an hyperbolic surface, we define the \emph{trace systole} $\rm{tys} (\rho)$ of a representation $\rho \in  \mathrm{Hom} (\pi_1 (S) , \SLtwoC ) $ by :
$$ \rm{tys} (\rho) = \inf \left\{ | \tr (\rho (\gamma)) | \ , \ \gamma \in \widehat{\Omega}  \right\}.$$
As the trace of an element is invariant by conjugation, this map descends to a map, still denoted $\mathrm{tys}$, on the character variety $\X (\Sigma) = \mathrm{Hom} (\pi_1 (\Sigma) , \SLtwoC ) / \SLtwoC$, which is the quotient of the space of representations by the conjugation action of $G$.

When $\Sigma$ is a closed surface, classical results on systoles show that the function $\rm{tys}$ is bounded on $\X (\Sigma)$ and attains its maximum. So we can define :
$$\rm{Tys} (\Sigma) = \max \big\{ \mathrm{tys} (\rho) , [\rho] \in \X (\Sigma) \big\}$$

When $\Sigma$ has boundary components, the fundamental group $\pi_1 (\Sigma)$ is a free group, and $\rm{tys}$ is usually an unbounded function on the whole character variety. In that case, one has to consider relative character varieties, with prescribed traces on each boundary. More precisely, if $\Sigma$ has $p$ boundary components represented by $c_1 , \dots, c_p \in \pi_1 (\Sigma)$, and $\mathfrak{B} = (b_1,\dots, b_p) \in \C^p$, we can consider the $\mathfrak{B}$-relative character variety as
$$\X_\mathfrak{B} (\Sigma) = \big\{ [\rho] \in \X (\Sigma) \ \big| \ \forall i \in \{ 1 , \dots , p\} , \tr (\rho (c_i)) = b_i \big\}$$
Therefore, we define similarly $\rm{Tys}_\mathfrak{B} (\Sigma)$ as the maximum of the restriction of the map $\rm{tys}$ on $\X_\mathfrak{B} (\Sigma)$. 

The main purpose of this article is to find the explicit values of $\rm{Tys} (\Sigma)$ and $\rm{Tys}_{\mathfrak{B}} (\Sigma)$, when the surface $\Sigma$ is either the one-holed torus, the four-holed sphere or the non-orientable surface of genus $3$. To study the trace systole of a representation $\rho$ in these specific cases, we will simply study the map $\phi_\rho : \widehat \Omega \rightarrow \C$, defined by $\phi_\rho (X) = \tr (\rho (X))$, and we will use a particular combinatorial description of $\widehat \Omega$. More precisely, for each surface studied, the curve complex will correspond to the Farey graph on $\Q \cup \{ \infty \}$, or will be constructed naturally from it. One can then consider the dual graph $\mathcal{T}$ to this complex which is an infinite trivalent tree embedded in the disk, and the set $\Omega$ of complimentary regions is in direct correspondance with $\widehat{\Omega}$. In that setting, the vertices of $\mathcal{T}$ correspond to triples of curves with minimal intersection. 

So one can consider the map $\phi : \Omega \rightarrow \C$ independently of an underlying representation. Indeed this map will satisfy natural conditions around vertices and edges of $\mathcal{T}$ coming from the trace identities in $\mathrm{SL} (2 , \C)$, and depending on a parameter $\Um = (\lambda_1 , \lambda_2 , \lambda_3 , s) \in \C^4$. These conditions are as follows :
\begin{enumerate}
	\item If $X_1, X_2, X_3 \in \Omega$ are three regions meeting at a vertex $v \in V(\mathcal{T})$, then
$$	\phi(X_1)^2 + \phi(X_2)^2 + \phi(X_3)^2 - \phi(X_1)\phi(X_2) \phi (X_3) + \lambda_1 \phi (X_1) + \lambda_2 \phi(X_2) + \lambda_3 \phi (X_3) = s$$
	\item If $X_j, X_k \in \Omega$ intersect along edge $e_i \in E (\mathcal{T})$ and $X_i, X_i' \in \Omega$ are the two regions at the ends of $e_i$,  then
	$$\phi(X_i) + \phi(X_i') = \phi(X_j) \phi(X_k) - \lambda_i $$
\end{enumerate}
and are called respectively the vertex equation and the edge equation. A map satisfying these compatibility conditions at all vertices and edges will be called a \emph{(generalized) Markoff map}. 

The study of Markoff maps started with Bowditch \cite{bow_mar} who introduced them to give an alternative proof of McShane's identity and study quasifuchsian representations of the one-punctured torus. They were later studied by Tan-Wong-Zhang \cite{tan_gen} for any boundary conditions on the one-holed torus, and by Maloni-Palesi-Tan \cite{mal_ont} in the four holed sphere case. Similar objects have also been introduced  in the case of the three-holed projective plane by Huang-Norbury \cite{hua_sim} and Maloni-Palesi \cite{mal_ont2}. In all these situations, Markoff maps provide a method to construct open domain of discontinuity for  the mapping class group action and study the length spectrum of hyperbolic surfaces. 


In this article, one of the motivation is to use these maps to give optimal systolic inequalities as follows. For a Markoff map $\phi$, one can define a natural orientation of the edges of the tree $\mathcal{T}$ from the values of $\phi$. Namely, the arrow on the edge joining two complimentary regions of $X, X' \in \Omega$ is oriented from $X$ to $X'$ when $| \phi (X) | > |\phi (X')|$. This allows one to describe an elementary \emph{trace reduction algorithm}, starting from a given vertex of the tree $\mathcal{T}$ and following the oriented edges, as this will always reduce the values of $\phi$ in the three regions around the considered vertex. One can hope that reducing the trace will eventually end up at the minimum of the map $\phi$, which will be related to the trace systole of the underlying representation. 

In general, a path in the graph that follows oriented edges will end at a \emph{sink} of the Markoff map, which is a vertex of $\mathcal{T}$ such that all three incident edges point towards that vertex. Using an elementary study of the inequalities satisfied around a sink, one can show that the minimal value of the Markoff map on the regions adjacent to that vertex is bounded above by a constant depending only on $\Um$. However, the optimal value of this constant, that we will call the \emph{sink constant} denoted $M (\Um)$, was unknown except for the $\Um = (0,0,0,0)$ case that was determined by Bowditch. The question of the value in more general cases was first asked by Tan-Wong-Zhang (\cite{tan_gen}) but remained entirely open since then. The first main result of this article is the exact value of $M (\Um)$ in the case where $\lambda_1 = \lambda_2 = \lambda_3  = 0$.

\begin{introthm}\label{introthm:sink1}
	Let $\mu \in \C$ and $\Um = (0, 0, 0, \mu)$. Then $M(\Um) = | t_\mu |$ where $t_\mu$ is a dominant root of the polynomial equation $X^3 - 3X^2 + \mu = 0$.
\end{introthm}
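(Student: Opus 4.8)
The plan is to recast the statement as a constrained extremal problem for the three region-values meeting at a vertex, to prove $M(\Um)\ge|t_\mu|$ by exhibiting an explicit symmetric Markoff map, and to prove the matching inequality $M(\Um)\le|t_\mu|$ by a compactness argument followed by a Lagrange-multiplier analysis on the resulting boundary.

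First I record the local picture. For $\Um=(0,0,0,\mu)$ the relation around a vertex with adjacent region-values $(x,y,z)$ is $x^2+y^2+z^2-xyz=\mu$, and the flip relation across an edge is $x+x'=yz$. Two consequences: the constant triple $x=y=z=t$ satisfies the vertex relation exactly when $3t^2-t^3=\mu$, i.e.\ when $t$ is a root of $X^3-3X^2+\mu$; and, since $\Sigma$ is a tree and the flip relation propagates the vertex relation automatically, any triple $(x,y,z)$ obeying the vertex relation is the restriction of a genuine Markoff map with parameter $\Um$. As a vertex is a sink exactly when $|x|\le|yz-x|$, $|y|\le|zx-y|$ and $|z|\le|xy-z|$, the number $M(\Um)$ is the supremum of $\min(|x|,|y|,|z|)$ over all triples satisfying the vertex relation together with these three inequalities.

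For the lower bound I take the Markoff map determined by $x=y=z=t_\mu$: it has $\min(|x|,|y|,|z|)=|t_\mu|$, and its central vertex is a sink as soon as $|t_\mu-1|\ge1$. I would establish this last fact (together with the estimate $|t_\mu|\ge2$, needed below) as a lemma on the dominant root of $X^3-3X^2+\mu$, using the elementary symmetric relations $\sum t_i=3$ and $\sum_{i<j}t_it_j=0$: these give $t_2t_3=t_1^2-3t_1=t_1(t_1-3)$, and if the dominant root $t_1$ were too close to $1$ (or of modulus $<2$) then $|t_1-3|$ would be large enough relative to $|t_1|$ to force $\max(|t_2|,|t_3|)>|t_1|$, a contradiction. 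This yields $M(\Um)\ge|t_\mu|$; the borderline case $|t_\mu-1|=1$, which occurs for $\mu=4$, is handled by approaching the bound through nearby non-degenerate sinks.

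The upper bound is the core of the argument. Given a sink $(x,y,z)$ put $a=yz/x$, $b=zx/y$, $c=xy/z$; the three sink inequalities become precisely $|a-1|\ge1$, $|b-1|\ge1$, $|c-1|\ge1$, a short computation rewrites the vertex relation as $(a-1)+(b-1)+(c-1)-(a-1)(b-1)(c-1)=\mu-2$, and one has $x^2=bc$, $y^2=ca$, $z^2=ab$. So the task is to show $\min(|ab|,|bc|,|ca|)\le|t_\mu|^2$ under these constraints, the symmetric configuration $a=b=c=t_\mu$ showing this cannot be improved. Assume the contrary and order $|a|\le|b|\le|c|$, so that $|ab|>|t_\mu|^2$; solve the relation for $c=(\mu-ab)/(a+b-ab)$. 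The first step is to rule out escape to infinity: if $|b|\to\infty$ then $|a|$ stays bounded (otherwise $|(a-1)(b-1)(c-1)|$ would dominate $|(a-1)+(b-1)+(c-1)|$, against the relation) and $c\to a/(a-1)$, whence $|c|\le|a|\le|b|$, contradicting $|c|\ge|b|$; the other unbounded directions are excluded by the same combination of the algebraic relation with $|a-1|,|b-1|,|c-1|\ge1$, which in those cases pins $|ab|$ to a value $\le4\le|t_\mu|^2$. Hence every potential counterexample lies in a fixed bounded region; on its closure $|ab|$ attains a maximum exceeding $|t_\mu|^2$, and since $|ab|=|a|\,|b|$ is subharmonic in each variable separately this maximum is attained where one of $|a-1|,|b-1|,|c-1|$ equals $1$. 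On each of these boundary pieces I set up the Lagrange conditions, using $\partial c/\partial a=-(b^2-\mu b+\mu)/(a+b-ab)^2$ and its $a\leftrightarrow b$ analogue; together with the symmetry of the problem in $a$ and $b$ these force the extremal configuration to be symmetric, so that $|ab|=|c|^2$ with $c$ a root of $X^3-3X^2+\mu$ satisfying $|c-1|\ge1$, whence $|ab|\le|t_\mu|^2$ — the desired contradiction, so $M(\Um)=|t_\mu|$. The step I expect to be the main obstacle is exactly this last boundary analysis: the inequality constraints can be active at the extremum, and excluding asymmetric boundary critical points — i.e.\ showing that the symmetric Markoff map really is extremal among all sinks — is where the genuine difficulty lies.
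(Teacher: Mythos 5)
Your reduction is essentially the paper's, written in reciprocal coordinates: the paper sets $p=x/(yz)$, $q=y/(zx)$, $r=z/(xy)$, turns the sink conditions into $\Re(p),\Re(q),\Re(r)\le\tfrac12$ and the vertex relation into $p+q+r-1=\mu pqr$, and minimizes $|pq|$; your $a=1/p$, $b=1/q$, $c=1/r$ with $|a-1|\ge1$, $ab+bc+ca-abc=\mu$ is the same extremal problem ($\Re(p)\le\tfrac12$ is exactly $|1/p-1|\ge1$). The sharpness example $(t_\mu,t_\mu,t_\mu)$ also matches; note only that your symmetric-function sketch of $|t_\mu|\ge2$ is weaker than what is needed (your mechanism via $|t_1-3|>|t_1|$ requires $\Re(t_1)<\tfrac32$), whereas the paper proves the stronger statement $\Re(t_\mu)\ge2$, which yields both $|t_\mu|\ge2$ and $|t_\mu-1|\ge1$ at once.

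The genuine gap is in your localization of the extremum for the upper bound. Maximum modulus (or separate subharmonicity) only forces the maximizer of $|ab|$ to the topological boundary of the \emph{ordered} domain, and that boundary contains the pieces $|a|=|b|$ and $|b|=|c|$ coming from the ordering $|a|\le|b|\le|c|$, not only the pieces $|a-1|=1$, $|b-1|=1$, $|c-1|=1$. You cannot drop the ordering to avoid this: without it $|ab|$ is unbounded on the constraint set (for $\mu=0$ take $c=2$, $a=R\to\infty$, $b=2+4/(R-2)$; all three constraints $|\cdot-1|\ge1$ hold and $|ab|\to\infty$). Worse, the true maximizer $a=b=c=t_\mu$ satisfies $|t_\mu-1|>1$ strictly whenever $\mu\ne4$ (since $\Re(t_\mu)>2$), so it lies on the ordering boundary and in the \emph{interior} of the three constraints $|\cdot-1|\ge1$. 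Your assertion that the maximum is attained where one of $|a-1|,|b-1|,|c-1|$ equals $1$ is therefore false, and the Lagrange analysis carried out on those pieces cannot reach the actual extremum or produce the contradiction. The paper handles exactly this point differently: a perturbation argument (if $|q|>|r|$ strictly at a minimizer of $|pq|$, rescale $p$ and $q$ by $1-\varepsilon$ and adjust $r$ to stay on the variety, strictly decreasing $|pq|$) forces $|q|=|r|$ at the extremum; a M\"obius-image-of-circles argument then upgrades this to $q=r$; only after this reduction to one variable do the half-plane constraints $\Re(\cdot)=\tfrac12$ enter as the boundary to be analyzed. Some substitute for that step --- dealing with the active \emph{ordering} constraints --- is what your proposal is missing.
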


As Markoff maps in this case are in correspondance with representations of the one-holed torus surface group, one can then directly rely the sink constant $M(\Um)$ with the trace systole $\rm{Tys}_{\mathfrak{B}} (\Sigma_{1,1})$ for the one holed torus with some boundary data $\mathfrak{B}$. With some small modifications, one can also use this result to determine the trace systole for the non-orientable surface of genus $3$. So using the computed value of the sink constant above, one can obtain :

\begin{introthm}\label{introthm:sys1}
	\begin{enumerate} 
		\item Let $\Sigma_{1,1}$ be a one-holed torus and $k \in \C \setminus \{ 2 \}$. We have 
$$\mathrm{Tys}_k ( \Sigma_{1,1}  ) = | t_{k+2} |$$
		\item Let $N_3$ be the closed non-orientable surface of genus $3$.
		$$\mathrm{Tys} (N_3) = \sqrt{3 + \sqrt{17}} $$
	\end{enumerate}
\end{introthm}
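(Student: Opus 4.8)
For part~(1) the strategy is to reinterpret the problem in terms of generalized Markoff maps and then invoke Theorem~\ref{introthm:sink1}. Write $\pi_1(\Sigma_{1,1}) = \langle X, Y\rangle$ with boundary $[X,Y]$. The classical trace parametrization of the $\mathrm{SL}(2,\C)$-character variety of the one-holed torus identifies $[\rho]$ with $(x,y,z) := (\tr\rho(X),\tr\rho(Y),\tr\rho(XY))\in\C^3$, and under it $\tr\rho([X,Y]) = x^2+y^2+z^2-xyz-2$, so $\X_k(\Sigma_{1,1})$ is the hypersurface $x^2+y^2+z^2-xyz = k+2$. The essential simple closed curves of $\Sigma_{1,1}$ correspond to slopes in $\Q\cup\{\infty\}$, i.e.\ to the regions $\Omega$ of the Farey tree $\Sigma$, and their traces are generated from $(x,y,z)$ by the Vieta moves $x\mapsto yz-x$; this is exactly a Markoff map $\phi_\rho\colon\Omega\to\C$ with parameter $\Um = (0,0,0,k+2)$, and $\mathrm{tys}(\rho) = \inf_\Omega|\phi_\rho|$. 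Hence $\mathrm{Tys}_k(\Sigma_{1,1})$ equals the supremum of $\inf_\Omega|\phi|$ over all Markoff maps $\phi$ with this parameter.

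The upper bound $\mathrm{Tys}_k(\Sigma_{1,1})\le|t_{k+2}|$ then follows from the structure theory of Markoff maps (Bowditch \cite{bow_mar}, Tan--Wong--Zhang \cite{tan_gen}): orienting each edge of $\Sigma$ in the direction of decreasing $|\phi|$, whenever the resulting directed tree has a sink $v$ one has, directly from the definition of the sink constant and Theorem~\ref{introthm:sink1}, $\inf_\Omega|\phi|\le\min_{\omega\ni v}|\phi(\omega)|\le M(\Um)=|t_{k+2}|$; the non-generic maps with no sink are disposed of by the classification in \cite{tan_gen}, the excluded value $k=2$ being precisely the reducible locus on which the argument degenerates. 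For the matching lower bound I would use the extremal map produced in the proof of Theorem~\ref{introthm:sink1}: the $\Um$-symmetric Markoff map $\phi^*$ having a vertex $v^*$ at which the three adjacent regions all carry the value $t_{k+2}$ --- note that the symmetric vertex relation $3t^2-t^3=k+2$ is $X^3-3X^2+(k+2)=0$ and that one takes the dominant root. It must be checked that $v^*$ is a sink of $\phi^*$ and that $|\phi^*|$ increases strictly along every directed ray issuing from $v^*$, so that $\inf_\Omega|\phi^*|=|t_{k+2}|$ is attained exactly on the three regions around $v^*$; this is where the Fibonacci-type growth estimates underlying Theorem~\ref{introthm:sink1} are used. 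Since $k\neq 2$ the map $\phi^*$ is irreducible and its preimage $\rho^*\in\X_k(\Sigma_{1,1})$ satisfies $\mathrm{tys}(\rho^*)=|t_{k+2}|$, proving~(1).

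For part~(2) I would begin with the topology: $N_3$ is a one-holed torus glued to a cross-cap along a curve $c$, so $\pi_1(N_3) = \langle X,Y,Z\mid [X,Y]=Z^2\rangle$, a representation $\rho$ restricts to a free-group representation of $\langle X,Y\rangle$ with $\tr\rho([X,Y])=w^2-2$ where $w=\tr\rho(Z)$, and --- $N_3$ being closed --- the Markoff parameter $\mu = w^2$ is now a non-constant function on $\X(N_3)$ rather than a fixed boundary datum. Using the description of the essential simple closed curves of $N_3$, their trace functions organize into finitely many families indexed by the combinatorics of (a modification of) the Farey tree: two-sided curves contained in the one-holed-torus subsurfaces, governed by Markoff maps with $\Um$ of the form $(0,0,0,w_\zeta^2)$ where $w_\zeta$ runs over the traces of the one-sided curves $\zeta$; the one-sided curves themselves, governed by a Markoff-type recursion; the two-sided curves $\zeta^2$ bounding Möbius bands, with trace $w_\zeta^2-2$; and curves crossing the cross-cap. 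Combining Theorem~\ref{introthm:sink1} for the Markoff families with elementary estimates for the remaining ``exceptional'' ones yields an explicit upper bound $\mathrm{tys}(\rho)\le B(\rho)$, with $B$ a concrete function of finitely many trace coordinates.

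It then remains to maximize $B$ over $\X(N_3)$. Since $\mathrm{tys}$ is known (as noted in the introduction) to be bounded and to attain its maximum on $\X(N_3)$, it suffices to compute $\max_\rho B(\rho)$ and to exhibit a representation realizing it. The maximum occurs at a highly symmetric representation for which several of the competing bounds in $B$ coincide; equating them and eliminating variables reduces --- after setting $u = \mathrm{Tys}(N_3)^2$ --- to the quadratic $u^2-6u-8=0$, whose relevant root is $u = 3+\sqrt{17}$, and one concludes by writing down the extremal representation and checking $\mathrm{tys}=\sqrt{3+\sqrt{17}}$. I expect the real work to be in part~(2): assembling the complete list of simple closed curves of $N_3$ with their trace functions, establishing the upper bound \emph{uniformly} over a non-compact character variety on which the Markoff parameter $\mu$ is no longer fixed, and correctly identifying which of the competing bounds are active at the maximum. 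Part~(1), by contrast, is essentially a translation of Theorem~\ref{introthm:sink1} through the trace coordinates.
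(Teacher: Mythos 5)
Part (1) of your proposal is essentially the paper's argument: translate $\X_k(\Sigma_{1,1})$ into $(0,0,0,k+2)$-Markoff maps, run the trace-reduction flow, apply the sink constant at a sink and the Tan--Wong--Zhang lemma on infinite descending rays (which needs $\mu=k+2\neq 4$, i.e.\ $k\neq 2$, and gives a region with $|\phi|<2\leq|t_{k+2}|$), and realize the bound with the symmetric triple $(t_{k+2},t_{k+2},t_{k+2})$. You correctly flag that one must still check that $|\phi^*|$ grows along every ray leaving the symmetric sink so that the infimum over \emph{all} regions is $|t_{k+2}|$; the paper is equally terse on this point, so this is fine.

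Part (2) is where your proposal has a genuine gap. Your plan --- enumerate the curve types of $N_3$, bound each family separately, and maximize a function $B(\rho)$ of finitely many trace coordinates --- lands on the right quadratic $u^2-6u-8=0$, but it omits the mechanism that actually produces the bound and, in particular, the square root. The paper's argument uses \emph{only} the one-sided curves: with character $(a,b,c,d)$, $d=\tr\rho(\a\b\g)$ the trace of the unique orientable one-sided curve $\delta$, the rescaled traces $\bigl(\frac{cd}{2},\frac{ad}{2},\frac{bd}{2}\bigr)$ form a single $(0,0,0,d^2)$-Markoff map $\phi$, so every region satisfies $\phi(X)=\frac{d}{2}\tr\rho(\gamma)$ for a one-sided curve $\gamma$. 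The dichotomy is then on $|d|$ alone: if $|d|\leq\sqrt{3+\sqrt{17}}$ the curve $\delta$ itself is short; if $|d|>\sqrt{3+\sqrt{17}}$, Theorem~\ref{introthm:sink1} gives a region with $|\phi(X)|\leq|t_{d^2}|\leq|t_{-|d|^2}|$ (one needs the monotonicity fact that $|t_\mu|$ over $|\mu|=r$ is maximized at $\mu=-r$, which your sketch does not mention), and dividing by $|d|/2$ gives a one-sided curve of trace at most $2|t_{-|d|^2}|/|d|<\sqrt{3+\sqrt{17}}$. Balancing $|d|$ against $2|t_{-|d|^2}|/|d|$ is exactly what produces $u^2-6u-8=0$ with $u=|d|^2$. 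Without the rescaling identity $\phi(X)=\frac{d}{2}\tr\rho(\gamma)$ and the fact that the Markoff parameter is $d^2$ (so that the bound must be divided by $|d|/2$, turning the cubic root $t_{-u}$ into $\sqrt{u}$), your ``concrete function $B$'' is never actually constructed, and the two-sided and exceptional families you list are never shown to be non-binding. As written, part (2) is a plausible outline rather than a proof.
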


For general $\Um \in \C^4$, the constant $M (\Um)$ appears to be much more difficult to determine. Nevertheless we can still obtain similar results when $\Um \in U= [0,+\infty[^3 \times ]- \infty, 4] \subset \R_+^4 $ and one restricts to Markoff maps with real positive image. These correspond to representations of the four-holed sphere $\Sigma_{0,4}$ where all four boundary traces are real and positive. In that case, we obtain : 

\begin{introthm}\label{introthm:sink2}
	Let $\Um = (\lambda_1 , \lambda_2, \lambda_3 , \mu) \in U$.  We denote by $T_{\Um}$ the largest positive real root of $$X^3 - 3X^2 - (\lambda_1 + \lambda_2+\lambda_3)X + \mu=0$$
	When restricted to Markoff maps with real positive image, we have : 
	$$M( \Um )= T_{\Um} $$
\end{introthm}


When $S$ is an hyperbolic surface which is homeomorphic to $\Sigma$ then it gives rise to an holonomy representation $\rho_S : \pi_1 (\Sigma) \rightarrow \PGL (2 , \R)$ which is well-defined up to conjugacy. The length $l_S (\gamma)$ of a closed geodesic $\gamma$ on the hyperbolic surface $S$ is related to the trace of its image by the holonomy representation using the formula
$$l_S (\gamma) = \left\{ \begin{array}{ll} 2 \cosh^{-1} \left( \frac{\tr (\rho_S (\gamma))}{2} \right) & \mbox{ if } \gamma \mbox{ is 2-sided} \\
				2 \sinh^{-1} \left( \frac{\tr (\rho_S (\gamma))}{2} \right) & \mbox{ if } \gamma \mbox{ is 1-sided}
				\end{array}\right. $$
				
This gives a natural relation between the value of the trace systole of a Fuchsian representation and the usual systole $\mathrm{sys}(S)$ of the corresponding hyperbolic surface. This relation can be generalised for hyperbolic $3$-manifolds coming from quasi-Fuchsian representations of these surfaces, and also for incomplete hyperbolic structures on singular surfaces with conical singularities. Using this correspondance, one can obtain several systolic inequalities that we can sum up as follows :

\begin{introthm}\label{introthm:sys2}\

	\begin{enumerate}
		\item If $S$ is an hyperbolic one-holed torus, with geodesic boundary of length $l$, then
			$$\cosh \left( \frac{ sys(S) }{2} \right) \leq \cosh \left( \frac{l}{6} \right) + \frac 12 $$
		\item If $S$ is a quasi-Fuchsian structure of a once-punctured torus (with a cusp), then 
			$$\cosh \left( \frac{ sys(S) }{2} \right) \leq \frac 32 $$
		\item If $S$ is a singular hyperbolic structure on a torus with a conical singularity of angle $\theta$, then 
			$$\cosh \left( \frac{ sys(S) }{2} \right) \leq \cos \left( \frac{\theta}{6} \right) + \frac 12$$
		\item If $S$ is an hyperbolic four-holed sphere, with geodesic boundaries of length $l_i = 2 \cosh^{-1} \left( \frac{a_i}{2} \right)$, then 
			$$ \cosh \left( \frac{sys(S)}{2} \right) \leq \frac{T_\mu}{2}$$
			with $\Um = (a_1a_2 +a_3a_4 , a_1a_4 + a_2 a_3 , a_1 a_3 + a_2 a_4 , 4-a_1^2-a_2^2-a_3^2-a_4^2 - a_1 a_2 a_3 a_4)$.
		\item If $S$ is a quasi-Fuchsian hyperbolic structure of a four-punctured sphere, then :
			$$\cosh \left( \frac{sys(S)}{2} \right) \leq  \frac{7}{2} $$
		\item If $S$ is a quasi-Fuchsian structure on $N_3$, then
			$$\cosh (sys(S) ) \leq \frac{5+\sqrt{17}}{2}$$
	\end{enumerate}
	Moreover, all the inequalities above are optimal.
\end{introthm}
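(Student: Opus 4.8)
The plan is to deduce all five inequalities from the trace--systole computations already in hand (Theorems~\ref{introthm:sink1}--\ref{introthm:sink2}), by converting trace bounds into length bounds through the length--trace dictionary recalled just above. Concretely: if $X$ is a hyperbolic surface with holonomy $\rho_X$, then an essential simple closed geodesic $\gamma$ satisfies $\cosh(l_X(\gamma)/2)=\tfrac{1}{2}|\tr\rho_X(\gamma)|$ when $\gamma$ is $2$-sided and $\cosh(l_X(\gamma))=1+\tfrac{1}{2}|\tr\rho_X(\gamma)|^2$ when it is $1$-sided; since $\cosh$ is increasing and the systole is realised by an essential simple closed geodesic, this gives $\cosh(\mathrm{sys}(X)/2)=\tfrac{1}{2}\,\mathrm{tys}(\rho_X)$ (systolic curve $2$-sided) or $\cosh(\mathrm{sys}(X))=1+\tfrac{1}{2}\,\mathrm{tys}(\rho_X)^2$ (systolic curve $1$-sided). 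It then remains, in each case, to recognise the relevant relative character variety and to put the algebraic answer into closed form.

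Parts (1) and (3) concern hyperbolic structures on the orientable surface $\Sigma_{1,1}$, so the systolic curve is $2$-sided. An hyperbolic one-holed torus with geodesic boundary of length $l$ (resp.\ a hyperbolic torus with one cone point of angle $\theta$) has holonomy a Fuchsian representation of $\Sigma_{1,1}$ with boundary trace $k=-2\cosh(l/2)$ (resp.\ $k=-2\cos(\theta/2)$), the sign being forced by $\tr\rho([A,B])=x^2+y^2+z^2-xyz-2<-2$ on the Teichm\"uller component (with $x=\tr\rho(A)$, $y=\tr\rho(B)$, $z=\tr\rho(AB)$). Theorem~\ref{introthm:sys1}(1) then gives $\mathrm{tys}(\rho_X)\le|t_{k+2}|$; substituting $X=2\cosh(l/6)+1$ (resp.\ $X=2\cos(\theta/6)+1$) into $X^3-3X^2+(k+2)$ and using the triple-angle identity $4\cosh^3u-3\cosh u=\cosh(3u)$ (resp.\ $4\cos^3u-3\cos u=\cos(3u)$) with $u=l/6$ (resp.\ $u=\theta/6$) shows this value is a root, and since the other two roots have sum $3-X$ and product $(k+2)/X$ one checks it is the dominant one; dividing by $2$ gives the stated bounds (using also the standard fact that in the relevant range the systole is realised by a simple closed geodesic disjoint from the boundary or the cone point).

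For the quasi-Fuchsian statements (2), (4) and (5) one cannot read a geodesic length off $|\tr|$ directly, because the rotational part of the complex translation length is not controlled by the modulus of the trace; the device is instead to pass to the convex core $C(X)$. Each component of $\partial C(X)$, with its intrinsic metric, is a complete hyperbolic surface homeomorphic to $S$ whose holonomy is a \emph{Fuchsian} representation carrying the prescribed peripheral traces, and the inclusion $(\partial C(X),d_{\mathrm{intr}})\hookrightarrow X$ is $1$-Lipschitz, so $\mathrm{sys}(X)\le\mathrm{sys}(\partial C(X))$; one then applies the Fuchsian bound to $\partial C(X)$. For the once-punctured torus with a cusp this is Theorem~\ref{introthm:sys1}(1) with $k=-2$, giving $\mathrm{Tys}_{-2}(\Sigma_{1,1})=|t_0|=3$ (the dominant root of $X^2(X-3)$) and $\cosh(\mathrm{sys}(X)/2)\le\tfrac{3}{2}$. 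For the four-punctured sphere with cusps it is Theorem~\ref{introthm:sink2} with boundary data $(2,2,2,2)$ (the parabolic value, up to the standard sign normalisation, which leaves all interior traces unchanged), where the cubic factors as $(X-7)(X+2)^2$, so $T_{\mathfrak B}=7$ and $\cosh(\mathrm{sys}(X)/2)\le\tfrac{7}{2}$. For $N_3$ it is Theorem~\ref{introthm:sys1}(2), which gives $\mathrm{tys}\le\sqrt{3+\sqrt{17}}$.

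The step I expect to require the most care is the last one, namely the $1$-sided bookkeeping and sharpness in part~(5): on a hyperbolic $N_3$ the curve complex carries both $1$-sided and $2$-sided curves with different length formulas, so one must check that the bound $\cosh(\mathrm{sys}(X))\le 1+\tfrac{1}{2}(3+\sqrt{17})=\tfrac{5+\sqrt{17}}{2}$ coming from the $1$-sided formula is the binding one --- concretely that at the extremal structure the trace systole is attained by $1$-sided curves while every $2$-sided simple closed curve has trace $>\sqrt{7+\sqrt{17}}$, so that the systole is $1$-sided there and the inequality is sharp, which is read off the explicit symmetric Markoff map realising $\mathrm{Tys}(N_3)$. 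The remaining points --- the cubic factorisations, dominance of the candidate roots, and the $1$-Lipschitz estimate for the convex-core boundary --- are routine.
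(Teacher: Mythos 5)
Your treatment of parts (1)--(3) follows the paper's own route (boundary trace $k=-2\cosh(l/2)$ or $-2\cos(\theta/2)$, then the triple-angle identity to identify $t_{k+2}=2\cosh(l/6)+1$, resp.\ $2\cos(\theta/6)+1$, as the dominant root), and your factorisation $(X-7)(X+2)^2$ for part (4) is the right one. For the quasi-Fuchsian cases you diverge from the paper: the paper stays at the level of complex traces (for $\Sigma_{0,4}$ it bounds $|2+\tr|=4|\cosh^2(L/4)|\le 9$ at a sink of an auxiliary shifted Markoff map; for $N_3$ it uses $|\tr(\rho(\gamma))|^2=|2\cosh(L)-2|\ge 2\cosh(l)-2$ directly), whereas you pass to the convex core boundary and invoke the Fuchsian bounds there. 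For parts (2) and (4) your convex-core argument is sound and arguably cleaner, since it sidesteps the delicate direction of the inequalities $|\cosh(x+iy)|^2=\cosh^2x-\sin^2y$ versus $|\sinh(x+iy)|^2=\sinh^2x+\sin^2y$.

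There is, however, a genuine gap in part (5). A quasi-Fuchsian representation of $\pi_1(N_3)$ yields a \emph{twisted} $I$-bundle over $N_3$: the two components of the convex hull boundary are interchanged by the elements covering one-sided curves, so $\partial C(X)$ is a single surface homeomorphic to the orientation double cover $\Sigma_2$, not to $N_3$. Your $1$-Lipschitz reduction therefore only yields $\mathrm{sys}(X)\le\mathrm{sys}$ of a closed genus-$2$ hyperbolic surface, i.e.\ $\cosh(\mathrm{sys}(X)/2)\le 1+\sqrt2$, hence $\cosh(\mathrm{sys}(X))\le 5+4\sqrt2$, which is far weaker than $\tfrac{5+\sqrt{17}}{2}$; moreover a systolic geodesic of $X$ covered by a one-sided curve of $N_3$ lifts to $\Sigma_2$ with \emph{doubled} length, so the reduction does not even cleanly compare the two systoles. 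No Fuchsian reduction is needed here: Theorem~\ref{thm:tysN3} is stated for arbitrary representations $\pi_1(N_3)\to\SLtwoC$ and produces a \emph{one-sided} simple closed curve with $|\tr(\rho(\gamma))|^2\le 3+\sqrt{17}$; then $|\tr(\rho(\gamma))|^2=4|\sinh(L_\gamma/2)|^2=4\bigl(\sinh^2(l_\gamma/2)+\sin^2(\theta_\gamma/2)\bigr)\ge 2\cosh(l_\gamma)-2$ gives $\cosh(\mathrm{sys}(X))\le\tfrac{5+\sqrt{17}}{2}$ (the ``$=$'' in the statement should be read as ``$\le$'', with equality attained at the extremal character $(it,it,it,it)$, and your check that the bound from one-sided curves is the binding one is indeed needed there). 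You should replace the convex-core step in (5) by this direct complex-length estimate.
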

				
	The first two results were already known from previous works of Schmutz Schaller \cite{schmutz-systole} and Gendulphe \cite{gen_pay}, but the others appear to be new, at least in this form. In any case, even for (1) and (2), the proofs that we give here use a completely independent approach that does not rely on hyperbolic geometry. One can hope to generalize these results to compute optimal systolic inequalities for other hyperbolic surfaces of small complexity, which is still an open problem in many cases.
	
	\medskip
	
	We can also study properties of non-Fuchsian representations of surface groups using the notion of trace systole. In particular, this is related to a question of Bowditch \cite{bow_mar} : \emph{for a given type-preserving representation of a surface that is not discrete, does there exists a simple closed curve such that $\rho (\gamma)$ is not an hyperbolic element ?} Note that this would imply that the trace systole of such a representation is less than $2$. We prove that the answer is positive for $\Sigma_2$ the surface of genus $2$ and representations with Euler class $\pm 1$.
	
	\begin{introthm}\label{introthm:bow}
		Let $\rho : \pi_1 (\Sigma_2) \rightarrow \mathrm{PSL} (2 , \R)$ be a representation with Euler class $\pm 1$. Then there exists a simple closed curve $\gamma \in \pi_1 (\Sigma_2)$ such that $| \tr (\rho (\gamma)) | \leq 2$. 
	\end{introthm}

	This result was already proven by Marche and Wolff \cite{mar-wol}, using results on domination of non-Fuchsian representations by Fuchsian ones, and the explicit value of the Bers constant. The proof that we give here is completely independant and self-contained using our results on trace systoles of representations and we hope that this approach could be used for more general surfaces, where the answer to Bowditch's question is still unknown.
	
	\medskip

\medskip

\textbf{Plan of the paper.} 

We first recall the necessary background on Markoff maps in Section \ref{s:mar} focusing only on the combinatorial setting. The Section \ref{s:sink} is the core of the paper and is devoted to the definition of the the sink constant of Markoff maps, and the proofs of Theorems \ref{introthm:sink1} and \ref{introthm:sink2} providing explicit values of these constants in various cases. The proofs are technical but elementary and rely on finding the minimum of a function related to the Markoff map, on an explicit domain that is given by the inequalities defining a sink. In Section \ref{s:char} we will give the precise relation between character varieties of surface groups and Markoff maps, in the cases that we are interested in, namely the one-holed torus, the four-holed sphere and the non-orientable surface of genus $3$. This will allow us to give in Section \ref{s:sys} the main results in terms of trace systoles and prove Theorems \ref{introthm:sys1} and also get a geometrical interpretation in terms of the usual systole as described in Theorem \ref{introthm:sys2}. Finally, in Section \ref{s:bow} we will use these results to give an alternative proof of  Theorem \ref{introthm:bow} corresponding to Bowditch's question for the surface of genus $2$.

\section{Generalized Markoff maps}\label{s:mar}

In this section we recall the main definitions and properties of generalized Markoff maps, and refer to previous works \cite{bow_mar,tan_gen,mal_ont}  for more details.

\subsection{Farey triangulation and binary tree.}	\

	Let $\mathcal{F}$ be the Farey triangulation of the hyperbolic plane $\HH$. Recall that the ideal  vertices of $\mathcal{F}$ correspond to $\Q \cup \{ \infty \} \subset \partial \HH$ and that two vertices $\frac pq$, $\frac rs$ of $\mathcal{F}$ are joined by an edge if $|pq - rs| = 1$ (where we assume that $p,q,r,s \in \Z$ and $p\wedge q = r \wedge s = 1$). Let $\mathcal{T}$ be the dual graph to $\mathcal{F}$, where vertices correspond to the triangles of $\mathcal{F}$ and edges come from adjacency of triangles; see Figure \ref{fig:Farey}. We know that $\mathcal{T}$ is a countably infinite simplicial tree properly embedded in the plane all of whose vertices have degree $3$. We note $V(\mathcal{T})$ and $E(\mathcal{T})$ the set of vertices and edges of $\mathcal{T}$ respectively.

A \emph{complementary region} of $\mathcal{T}$ is the closure of a connected component of the complement, and we denote by $\Omega = \Omega (\mathcal{T}) $ the set of complementary regions of $\mathcal{T}$. The regions are in correspondance with vertices of $\mathcal{F}$ and hence they are indexed by elements of $\Q \cup \{ \infty \}$.

\begin{figure}[hbt]
\begin{minipage}[c]{0.45\linewidth}
\centering
\includegraphics[height=7 cm]{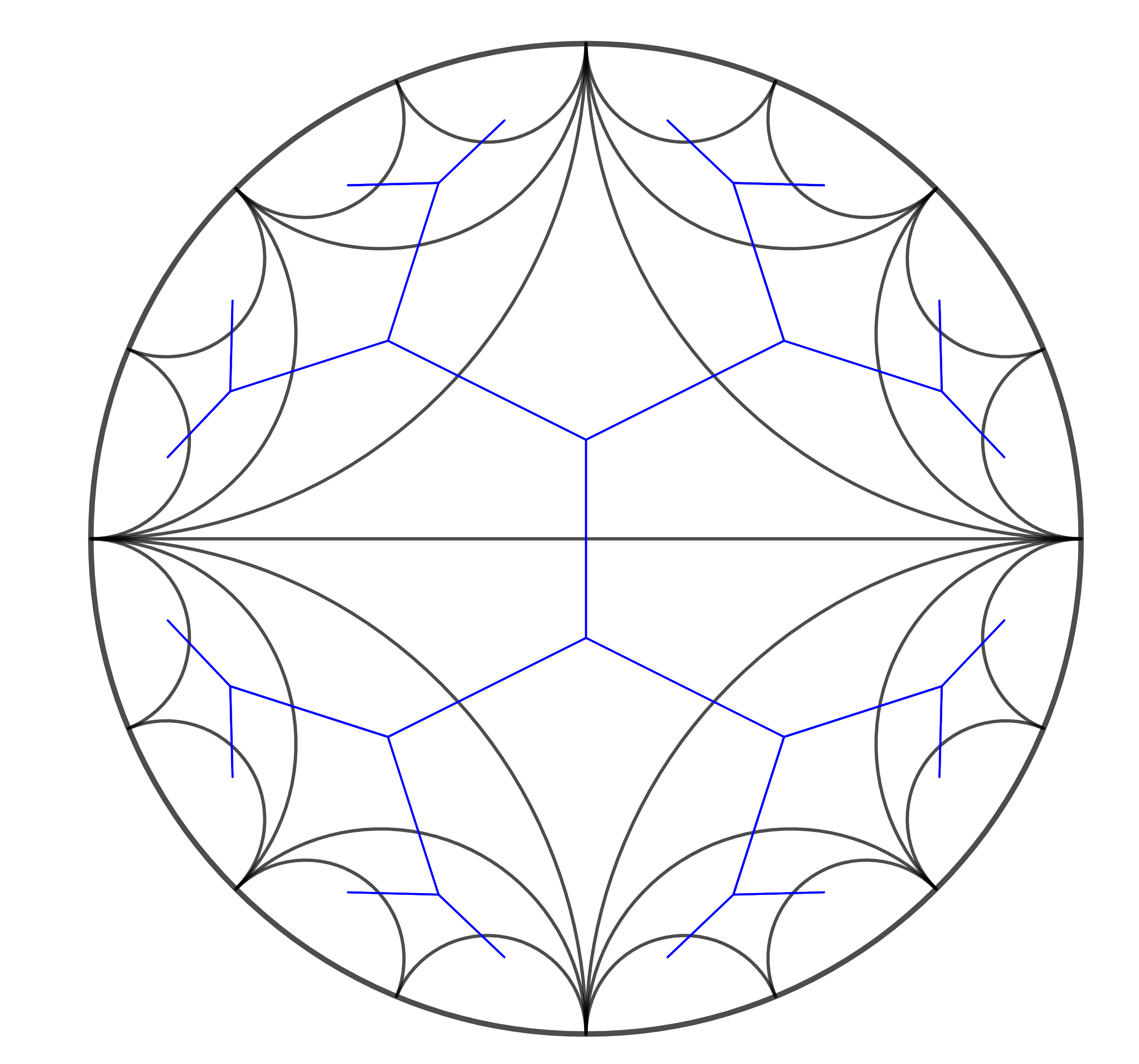}
\caption{The Farey triangulation (in black) and its dual graph $\mathcal{T}$ (in blue).}
\label{fig:Farey}
\end{minipage}
\begin{minipage}[c]{0.45\linewidth}
\centering
\includegraphics[height=7 cm]{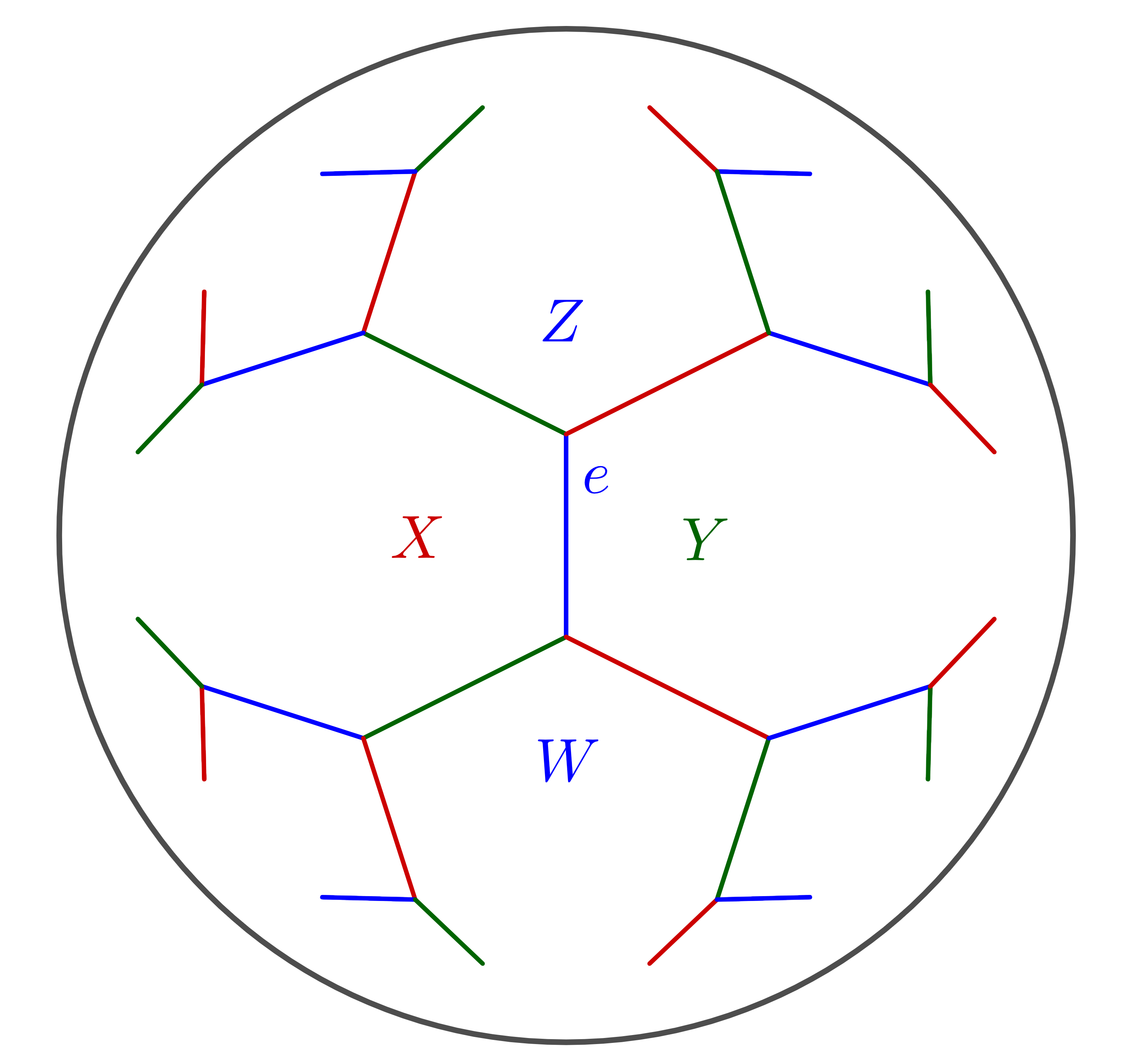}
\caption{A tricoloring of the regions and edges. Here $e\leftrightarrow (X,Y;Z,W)$.}
\label{fig:tricol}
\end{minipage}

\end{figure}

We will use capital letters $X,Y,Z,W, \ldots$ to denote elements of $\Omega$. For $e \in E(\mathcal{T})$, we will note $e\leftrightarrow (X,Y;Z,W)$ to indicate that $e=X \cap Y$ and $e\cap Z$ and $e \cap W$ are the endpoints of $e$; see Figure \ref{fig:tricol}.

We choose a tri-coloring of the regions and edges, namely a map $\mathcal{C} \co \Omega(\mathcal{T}) \cup E(\mathcal{T}) \to \{ 1, 2, 3 \}$ such that for any edge $e\leftrightarrow (X,Y;Z,W)$ we have $\mathcal{C}(e)=\mathcal{C}(Z) = \mathcal{C}(W)$ and such that $\mathcal{C} (e)$, $\mathcal{C}(X)$ , $\mathcal{C}(Y)$ are all different. This implies that the colors of three regions meeting at a vertex are all different, and the same also holds for the three edges meeting at a vertex. In fact, the coloring is completely determined by a coloring of the three regions (or three edges) around any specific vertex, and hence is unique up to a permutation of the set $\{1, 2, 3\}$. We denote by $\Omega_i = \Omega_i (\mathcal{T})$ the set of complementary regions with color $i$, and by $E_i = E_i (\mathcal{T})$ the set of edges with color $i$; see Figure \ref{fig:tricol}.

As a notational convention in the following, when $X,Y,Z$ are complementary regions around a vertex, we will consider that $X \in \Omega_1 $, $Y \in \Omega_2 $ and $Z \in \Omega_3 $, or in general that $X_i \in \Omega_i (\mathcal{T})$ .

\subsection{$\Um$--Markoff triples} \

For a complex quadruple $\Um = (\lambda_1, \lambda_2 , \lambda_3 , s) \in \C^4$, a $\Um$-{\it Markoff triple} is an ordered triple $(x_1,x_2,x_3) \in \C^3$ satisfying the $\Um$--Markoff equation, also called the vertex equation:
\begin{eqnarray}\label{eqn:vertex}
x_1^2 + x_2^2 + x_3^2 - x_1x_2x_3 + \lambda_1 x_1 + \lambda_2 x + \lambda_3 x_3 = s
\end{eqnarray}

\begin{Remark} Note that we slightly changed the convention used in our previous paper \cite{mal_ont} to ensure consistency with the notation of Tan-Wong-Zhang \cite{tan_gen}. To pass from one convention to the other, one simply has to replace $(x_1,x_2,x_3)$ by $(-x_1,-x_2,-x_3)$.

Note that, with this convention, if $(x_1,x_2,x_3)$ is a $\mu$--Markoff triple in the sense of Tan-Wong-Zhang, with $\mu \in \C$, then $(x_1,x_2,x_3)$ is a $\Um$--Markoff triple in our sense, with $\Um = (0,0,0,\mu)$.
\end{Remark}

It is easily verified that, if $(x_1,x_2,x_3)$ is a $\Um$--Markoff triple, so are the triples
\begin{equation}\label{eqn:elemoper}
(x_1,x_2,x_1x_2 - x_3 - \lambda_3), \hspace{0.3cm} (x_1,x_1x_3-x_2 - \lambda_2,x_3) \mbox{   and    } (x_2x_3 - x_1 - \lambda_1,x_2,x_3).
\end{equation}
It is important to note that in general, permutations triples are not $\Um$--Markoff triples, unlike the $\mu$--Markoff triples considered in Tan-Wong-Zhang \cite{tan_gen}. Namely, if $(x_1,x_2,x_3)$ is a $\Um$-Markoff triple, then the triple $(x_{\sigma(1)},x_{\sigma(2)},x_{\sigma(3)})$ with $\sigma \in \mathfrak{S}_3$ has no reason to be a $\Um$--Markoff triple.

\subsection{$\Um$--Markoff map}

\begin{Definition}
A $\Um$-{\it Markoff map} is a function $\phi \co \Omega \to \C$ such that
\begin{itemize}
\item[(i)] for every vertex $v \in V(\mathcal{T})$, the triple $(\phi(X_1), \phi(X_2), \phi(X_3))$ is a $\Um$--Markoff triple, where $X_1,X_2,X_3 \in \Omega$ are the three regions meeting $v$ such that $X_i \in \Omega_i$;
\item[(ii)] For any $i \in \{1,2,3\}$ and for every edge $e \in E_i(\mathcal{T})$ such that $e \leftrightarrow (X_j, X_k ; X_i, X_i')$ we have:
\begin{equation}\label{eqn:edge}
\phi(X_i)+\phi(X_i')=\phi(X_j) \phi(X_k)-\lambda_i.
\end{equation}
\end{itemize}
	We denote by ${\bf \Phi}_{\Um}$ the set of all $\Um$--Markoff maps.
\end{Definition}

One may establish a bijective correspondence between $\Um$--Markoff maps and $\Um$--Markoff triples, by fixing three regions $X_1, X_2, X_3$  which meet at some vertex $v_0$, and considering a map $\phi \mapsto (\phi(X_1), \phi (X_2 ), \phi(X_3))$. 

This process may be inverted by constructing a tree of $\Um$--Markoff triples as Bowditch did in \cite{bow_mar} for Markoff triples and as Tan, Wong and Zhang did in \cite{tan_gen} for the $\mu$--Markoff triples: given a $\Um$-Markoff triple $(x_1,x_2,x_3)$, set $\phi(X_i)=x_i$, and extend over $\Omega$ as dictated by the edge relations \ref{eqn:edge}. That's because if the edge relation \eqref{eqn:edge} is satisfied along all edges, then it suffices that the vertex relation \eqref{eqn:vertex} is satisfied at a single vertex to ensure that it is satisified at all vertices of $\mathcal{T}$.

This way, one obtains an identification of ${\bf \Phi}_{\Um}$ with the algebraic variety in $\C^3$ given by the $\Um$--Markoff equation. In particular, ${\bf \Phi}_{\Um}$ gets an induced topology as a subset of $\C^3$.

\subsection{Arrows assigned by a $\Um$--Markoff map} \

Let $\ov E(\mathcal{T})$ be the set of oriented edges.
Let $\phi \in {\bf \Phi}_{\Um}$. We can assign to each undirected edge, $e \in E (\mathcal{T})$, a particular directed edge, $\ov{e_\phi} \in \ov E (\mathcal{T})$, with underlying edge $e$, in the following way.

Suppose $e \leftrightarrow (X,Y;Z,W)$ and $\phi \in {\bf \Phi}_{\Um}$. If $|z|\geq|w|$, then we associate the element $\ov{e_\phi}$ in $\ov E(\mathcal{T})$, such that the arrow on $e$ points towards $W$; in other words, $\ov{e_\phi} = (X,Y;Z \rightarrow W)$. Reciprocally, if $|z|\leq |w|$, we put an arrow on $e$ pointing towards $Z$, that is, $\ov{e_\phi} = (X,Y;W \rightarrow Z)$. If it happens that $|z|=|w|$ then we consider that there is an arrow in both directions, as this will not affect the arguments in the latter part of this paper. 

For a given map $\phi$, a vertex with all three arrows pointing towards it is called a {\em sink}. Following previous works, we can also consider other types of vertex with respectively one, two or three arrows pointing away from it, and call them respectively a {\em merge}, a {\em fork} and  a \emph{source}, but we will only use the notion of sink in this article.

\section{Sink Constant}\label{s:sink}

This section will be devoted to the proofs of Theorems \ref{introthm:sink1} and \ref{introthm:sink2}. We start by recalling the following result from Maloni-Palesi-Tan (\cite{mal_ont}, Lemma 3.5) which will allow us to define what the sink constant is.

\begin{Lemma}\label{lem:sink}
  For all $\Um \in \C^4$, there exists a constant $m(\Um) \in \R_{>0}$ such that for all $\phi \in {\bf \Phi}_{\Um}$, if three regions $X_1, X_2, X_3$ meet at a sink, then
$$ \min \big\{ |\phi(X_1)| , |\phi (X_2)| , |\phi (X_3) | \big\} \leq m (\Um).$$
\end{Lemma}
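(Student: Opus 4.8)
The plan is to extract the bound entirely from the local data at the sink — the three values $x_i := \phi(X_i)$, the three ``neighbour'' values $x_i'$ of the regions lying across the edges issuing from the sink, and the vertex equation \eqref{eqn:vertex} — without ever exploring the tree $\Sigma$ any further.

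First I would make the sink condition quantitative. For $\{i,j,k\}=\{1,2,3\}$, let $X_i'$ be the fourth region along the edge of the sink separating $X_j$ and $X_k$, so that the edge relation \eqref{eqn:edge} reads $x_i + x_i' = x_j x_k - \l_i$. Multiplying this by $x_i$ and substituting the vertex equation \eqref{eqn:vertex} gives the identity
$$ x_i x_i' = x_j^2 + x_k^2 + \l_j x_j + \l_k x_k - s, $$
where $s$ denotes the last coordinate of $\Um$. Since all three arrows point towards the sink, the arrow rule forces $|x_i| \le |x_i'|$ for each $i$, hence $|x_i|^2 \le |x_i x_i'|$, so that, writing $\Lambda := \max_i |\l_i|$,
$$ |x_i|^2 \le |x_j|^2 + |x_k|^2 + \Lambda(|x_j|+|x_k|) + |s| \qquad (\{i,j,k\}=\{1,2,3\}). $$

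Next I would also use the vertex equation directly: rewriting \eqref{eqn:vertex} as $x_1 x_2 x_3 = x_1^2 + x_2^2 + x_3^2 + \l_1 x_1 + \l_2 x_2 + \l_3 x_3 - s$ and taking moduli gives $|x_1 x_2 x_3| \le 3M^2 + 3\Lambda M + |s|$, where $M := \max_i |x_i|$. Now I would combine the two estimates by a squeeze. Order the moduli $|x_L| \le |x_\mu| \le |x_M| = M$; applying the first estimate to the index realising $M$ bounds the top modulus by the middle one, $M^2 \le 2|x_\mu|^2 + 2\Lambda|x_\mu| + |s|$. After disposing of the trivial cases $|x_\mu| < 1$ or $M < 1$ (in which $|x_L| < 1$ and we are done), both $|x_\mu|$ and $M$ are $\ge 1$, so I may divide $|x_L|\,|x_\mu|\,M \le 3M^2 + 3\Lambda M + |s|$ by $|x_\mu| M$ and insert the bound on $M$; this leaves $|x_L|$ bounded by an explicit function of $\Lambda$ and $|s|$, and one may take, for instance,
$$ m(\Um) = \max\Bigl\{ 1,\ 3\sqrt{2 + 2\Lambda + |s|} + 3\Lambda + |s| \Bigr\}. $$

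I do not expect a genuine obstacle here. The heart of the matter is the two-step squeeze: the sink condition forces the largest of the three moduli to be at most (essentially) $\sqrt 2$ times the middle one, while the vertex equation forces the product of all three to be at most a constant times the square of the largest, and together these pin down the smallest. The only points needing care are the derivation of the product identity $x_i x_i' = x_j^2 + x_k^2 + \l_j x_j + \l_k x_k - s$ (which genuinely uses the vertex equation, not merely the edge relation) and the small-modulus bookkeeping that makes the divisions legitimate; both are routine.
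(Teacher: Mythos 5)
Your argument is correct, and it is worth noting that the paper itself does not prove this lemma at all --- it is imported verbatim from Maloni--Palesi--Tan, so your write-up is a genuine self-contained substitute rather than a reconstruction. The key identity $x_i x_i' = x_j^2 + x_k^2 + \lambda_j x_j + \lambda_k x_k - s$ checks out (multiply the edge relation by $x_i$ and eliminate $x_1x_2x_3$ via the vertex equation), the arrow convention does give $|x_i|\le |x_i'|$ at a sink even with the ``both directions'' tie-breaking, and the two-step squeeze --- $M^2 \le 2|x_\mu|^2 + 2\Lambda|x_\mu| + |s|$ from the sink condition at the maximal index, $|x_L||x_\mu|M \le 3M^2+3\Lambda M+|s|$ from the vertex equation, then divide after excluding the case $|x_\mu|<1$ --- is airtight and yields the explicit constant you state. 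Your route is more elementary than the technique this paper deploys for the \emph{sharp} version of the bound (Theorem 3.4 and Lemma 3.6), which in the case $\lambda_1=\lambda_2=\lambda_3=0$ passes to the reciprocal variables $p = x_1/(x_2x_3)$, etc., rewrites the sink condition as $\Re(p),\Re(q),\Re(r)\le \tfrac12$ together with $p+q+r-1=\mu pqr$, and then solves a genuine optimization problem; that machinery is what pins down the optimal constant $|t_\mu|$, whereas your argument trades optimality for a short, fully general proof valid for arbitrary $\Um\in\C^4$ (indeed you only use the sink condition on the single edge opposite the region of largest modulus). One cosmetic remark: the inequality $|x_i|^2 \le |x_i x_i'|$ silently uses $|x_i|\le|x_i'|$ times $|x_i|$, which is fine, but you should flag that the whole division step is vacuous when some $x_i=0$, in which case the minimum is $0$ and there is nothing to prove --- you do handle this implicitly via the $|x_\mu|<1$ case, so no actual gap.
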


An explicit value for $m (\Um)$ was given in \cite{mal_ont}, but this value was far from being optimal. So, a question that arises naturally is to understand the lowest possible value of this constant $m (\Um)$, which we call the \emph{sink constant} and denote $M (\Um)$. The question of the optimal value was first asked  in the $\Um = (0,0,0, \mu)$ case by Tan-Wong-Zhang in \cite{tan_gen}, who stated that it seemed difficult to determine that value. Previously, the only case known was the simplest one, studied by Bowditch who proved that  $M ((0,0,0,0)) = 3$ (see Lemma 3.2.(2) in \cite{bow_mar}).


\subsection{Case $\lambda_1=\lambda_2=\lambda_3 = 0$}\

Here, we fully answer the question of Tan-Wong-Zhang in the $(0,0,0,\mu)$ case both for general complex valued Markoff maps, and also for real valued Markoff maps where a different bound appears.

	\subsubsection{Complex case}\
	
	The exact value of the sink constant will be directly related to the following implicit function:
	
	\begin{Definition}
		Let $a \in \C$. We denote by $t_a \in \C$ a dominant root of the polynomial equation $X^3 - 3X^2 + a = 0$.	
	\end{Definition}

	We first make a simple observation on the real part of $t_a$, denoted $\Re (t_a)$.

	\begin{Lemma}\label{lem:Retmu}
		For all $a \in \C$, we have $\Re (t_a) \geq 2$. Moreover, $\Re (t_a) = 2$ if and only if $a = 4$.
	\end{Lemma}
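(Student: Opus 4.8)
The plan is to analyze the cubic $p_a(X) = X^3 - 3X^2 + a$ directly, using the substitution that removes the quadratic term. Writing $X = Y + 1$ we get $p_a(X) = Y^3 - 3Y + (a-2)$, so the roots of $p_a$ are the shifts by $1$ of the roots of $Y^3 - 3Y + (a-2) = 0$. This is a depressed cubic of the special form $Y^3 - 3Y = c$ with $c = 2 - a$, which is precisely the shape solved by the substitution $Y = w + 1/w$: indeed $(w+1/w)^3 - 3(w+1/w) = w^3 + 1/w^3$. So the three roots are $Y = w + 1/w$ where $w^3$ runs over the two roots of $W^2 - cW + 1 = 0$, i.e. $w^3 = \frac{c \pm \sqrt{c^2-4}}{2}$; note these two values are reciprocals of each other, consistent with $w \mapsto 1/w$ giving the same $Y$.

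The key claim $\Re(t_a) \geq 2$ is equivalent, after the shift, to $\Re(Y) \geq 1$ for the dominant root $Y = t_a - 1$. First I would pin down what ``dominant root'' means here (presumably the root of largest modulus, with some tie-breaking convention; I would confirm the convention is stated or make it explicit) and argue that the dominant root of $p_a$ corresponds to choosing $w$ with $|w| \geq 1$ maximal, i.e. $w^3 = \frac{c + \sqrt{c^2-4}}{2}$ taken with the branch of largest modulus, and then $w$ the cube root of largest modulus. With $|w| \geq 1$, write $w = re^{i\theta}$ with $r \geq 1$; then $\Re(Y) = \Re(w + 1/w) = (r + 1/r)\cos\theta$. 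This alone does not give $\Re(Y) \geq 1$ (the cosine can be negative), so the real content is that among the three roots the dominant one must have $\cos\theta \geq 0$ and in fact $(r+1/r)\cos\theta \geq 1$; equivalently one shows the dominant root cannot be one of the two ``non-dominant'' branches which are the ones with $\Re(Y) < 1$. A cleaner route: the three roots $Y_1, Y_2, Y_3$ of $Y^3 - 3Y = c$ satisfy $Y_1 + Y_2 + Y_3 = 0$ and $Y_1 Y_2 + Y_1 Y_3 + Y_2 Y_3 = -3$, hence also $Y_1^2 + Y_2^2 + Y_3^2 = 0 - 2(-3) = 6$. Translating back, $t_a = Y_i + 1$ gives $\sum (t_a^{(i)} - 1) = 0$ and $\sum (t_a^{(i)})^2 = 6 + 2\cdot 3 \cdot 1 - \ldots$; I would use these symmetric-function identities to control the real parts of all three roots simultaneously and isolate the dominant one.

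The main obstacle is making the phrase ``dominant root has real part $\geq 2$'' rigorous over all $a \in \C$ at once, including the branch-cut behavior of the square root in $\sqrt{c^2-4}$ and the cube root — i.e. ensuring the ``dominant'' selection is continuous/well-defined and that the inequality is uniform. I expect the equality case $a = 4$, which gives $c = -2$, $c^2 - 4 = 0$, so $w^3 = -1$ is a triple-ish degeneracy: then $Y^3 - 3Y - 2 = (Y+1)^2(Y-2)$, roots $Y = 2, -1, -1$, so $t_4 = 3$ (dominant) with $\Re(t_4) = 3 > 2$ — wait, this forces me to re-examine: the claim is $\Re(t_a) \geq 2$ with equality iff $a = 4$, yet $t_4 = 3$. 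So the equality $\Re(t_a) = 2$ must occur in the limit/boundary sense along some locus, and I need to identify exactly which $a$ give $\Re(t_a) = 2$: setting $t_a = 2 + it$ for real $t$ in $X^3 - 3X^2 + a = 0$ and solving for real $a$ yields a constraint; solving $\Re((2+it)^3 - 3(2+it)^2 + a) = 0$ and $\Im(\ldots) = 0$ simultaneously. The imaginary part is $t(3\cdot 4 - t^2) - 3 \cdot 2 \cdot 2 t = t(12 - t^2 - 12) = -t^3$, which vanishes only at $t = 0$, giving $t_a = 2$, hence $a = -(8) + 12 = 4$. So in fact $\Re(t_a) = 2$ forces $t_a = 2$ forces $a = 4$ — but $a=4$ gives $t_a = 3$, contradiction unless $2$ is a root of $X^3 - 3X^2 + 4$: $8 - 12 + 4 = 0$, yes $2$ is a root! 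So $X^3 - 3X^2 + 4 = (X-2)(X^2 - X - 2) = (X-2)^2(X+1)$ — roots $2, 2, -1$, and the dominant root is $t_4 = 2$. I made an arithmetic slip above; the correct factorization gives $t_4 = 2$, consistent with the Lemma. Thus the real proof strategy is: show $\Re(t_a) = 2 \iff t_a$ is real and equals $2$ (via the ``imaginary part is $-t^3$'' computation on the line $\Re X = 2$), $\iff a = 4$; and for the inequality, show the curve $\{a : \Re(t_a) = 2\}$ degenerates to the single point and use a connectedness/degree argument or a direct estimate to conclude $\Re(t_a) > 2$ elsewhere. I would present the line-$\Re X = 2$ computation as the crisp core of the argument, handle the ``which root is dominant'' bookkeeping via the $w + 1/w$ parametrization, and flag the global uniformity as the point requiring care.
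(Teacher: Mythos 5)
There is a genuine gap: you assemble the right raw material (after your shift, $\sum Y_i = 0$ and $\sum Y_i^2 = 6$, equivalently $t_1+t_2+t_3=3$ and $t_1^2+t_2^2+t_3^2=9$ by Vieta), but you never carry out the step that turns these two identities into the inequality $\Re(t_a)\geq 2$; you only announce that you ``would use these symmetric-function identities to control the real parts.'' The missing ingredients, which are exactly what the paper's proof supplies, are: (i) the pointwise bound $\Re(z^2)\leq(\Re z)^2$, which converts the complex identity $\sum t_i^2=9$ into the real inequality $\sum(\Re t_i)^2\geq 9$; and (ii) the elementary optimization that real numbers $x_1,x_2,x_3$ with $x_1+x_2+x_3=3$ and all $x_i<2$ satisfy $\sum x_i^2<9$ (the maximum $9$ on the relevant slice of $[-2,2]^3$ is attained only at permutations of $(-1,2,2)$). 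Together these force some root to have real part $\geq 2$, with equality only when all three roots are real and equal to $(-1,2,2)$, i.e.\ $a=4$. Your $w+1/w$ parametrization is correctly recognized as insufficient on its own, but nothing is put in its place.

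The fallback you propose for the equality locus and for globalizing the inequality also does not work as stated. The computation $\Im\bigl((2+it)^3-3(2+it)^2+a\bigr)=-t^3$ is valid only for $a\in\R$; for complex $a$ the term $\Im(a)$ can cancel $-t^3$, and indeed every point $2+it$ of the line $\Re X=2$ is a root of $X^3-3X^2+a$ for $a=3(2+it)^2-(2+it)^3$. The real content is that such a root is then not the dominant one, which requires comparing its modulus with the other two roots --- a comparison you never make. Likewise the proposed ``connectedness/degree argument'' is obstructed by the fact that $a\mapsto t_a$ (dominant meaning largest modulus) is discontinuous wherever two roots tie in modulus, a difficulty you flag at the outset and then rely on implicitly. (To be fair, the paper's own argument establishes only that \emph{some} root has real part $\geq 2$ and leaves the identification of that root with the dominant one implicit; but your proposal does not reach even that weaker statement.)
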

	\begin{proof}

		 Let $t_1, t_2, t_3$ be the three roots with multiplicity of the equation $X^3 - 3X^2 + a = 0$. From Vieta's formula, we have that  $t_1 + t_2 + t_3 = 3$ and $t_1^2 + t_2^2+t_3^2 = 9$. 
		 
		\underline{Claim} : If $x \leq y \leq z  \leq 2$ are such that $x + y + z = 3$, then $x^2+y^2+z^2 \leq 9$. Moreover, the last inequality is an equality if and only if $(x, y, z) = (-1 , 2 , 2)$.

	Indeed, as $x = 3 - y - z$ and $x$ is the smallest value, we easily get that $x \leq 1$. And as $y, z \leq 2$, we also have $x \geq -1$. Hence $x \in [-1, 1]$.
	
	 As $2z \geq y+z = 3-x$ we get $z \geq \frac{3-x}{2}$. If we let $x$ be fixed, and try to find the maximum of the function $f_x (z) =x^2 + (3-x-z)^2 + z^2$, on the interval $[\frac{3-x}{2} , 2 ]$ it is a straightforward computation to see that this maximum  is attained when $z=2$ and $\max f_x = 2x^2-2x+5$. The maximal value of the function $x \mapsto \max f_x$ on the interval $[-1, 1]$ is equal to $9$ and is attained only when $x= -1$. And if $x = - 1$ then $y=z = 2 $ which ends the proof of the claim.
	 
	 \medskip
	
		 Now, let $x_i = \Re (t_i)$, and assume by contradiction that $x_i < 2$ for all $i \in \{ 1, 2, 3 \}$. We can apply the previous result to the triple $(x_1, x_2, x_3)$ and we get $x_1^2 + x_2^2+x_3^2 <9$. But as $\Re (z^2) \leq (\Re (z)))^2$, we infer that $\Re (t_1^2 +t_2^2+t_3^2) \leq x_1^2+x_2^2+x_3^2 <9$, which is a contradiction. So at least one of the $t_j$ is such that $\Re (t_j) \geq 2$, which proves the first part of the claim.
		 
		 For the second part of the claim, if $a = 4$ then the solutions of the polynomial equation are $-1$ and $2$. And reciprocally, if $\max (\Re (t_j)) = 2$, then using the claim we know that $(\Re (t_1) , \Re (t_2), \Re (t_3)) = (-1, 2, 2)$ and hence $a = 3 t_1^2 - t_1^3 = 4$. 
		 
	\end{proof}

	We can now prove Theorem \ref{introthm:sink1} that we recall here.

	\begin{Theorem}\label{thm:sink1}
		Let $\mu \in \C$ and $\Um = (0,0,0,\mu)$, then 
		$$M (\Um) = |t_\mu|.$$
	\end{Theorem}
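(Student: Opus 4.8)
The plan is to establish the equality $M(\Um) = |t_\mu|$ by proving two inequalities. For the lower bound $M(\Um) \geq |t_\mu|$, I would exhibit an explicit $\Um$-Markoff map realizing a sink whose minimal adjacent value has modulus arbitrarily close to (or exactly equal to) $|t_\mu|$. The natural candidate is the symmetric solution: set $\phi(X_1) = \phi(X_2) = \phi(X_3) = t$ at a vertex $v_0$, where $t$ satisfies the vertex equation $3t^2 - t^3 = \mu$, i.e. $t^3 - 3t^2 + \mu = 0$. Taking $t = t_\mu$ the dominant root, one checks that $v_0$ is actually a sink for the induced Markoff map: this amounts to verifying, via the edge relation $\phi(X_i') = \phi(X_j)\phi(X_k) - \phi(X_i) = t^2 - t$, that $|t^2 - t| \geq |t|$, which by Lemma \ref{lem:Retmu} follows from $\Re(t_\mu) \geq 2 > 1$ (so $|t-1| \geq 1$). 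Since the three regions adjacent to $v_0$ all carry the value $t_\mu$, this gives $M(\Um) \geq |t_\mu|$.

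For the upper bound $M(\Um) \leq |t_\mu|$, I would argue by contradiction: suppose some $\phi \in {\bf \Phi}_{\Um}$ has a sink $v$ with three adjacent regions $X_1, X_2, X_3$ satisfying $|\phi(X_i)| > |t_\mu|$ for all $i$. Writing $x_i = \phi(X_i)$ and $x_i' = x_jx_k - x_i$ for the value on the region across the opposite edge, the sink condition is precisely $|x_i'| = |x_jx_k - x_i| \geq |x_i|$ for each $i$. The strategy is to combine these three inequalities with the vertex equation $x_1^2 + x_2^2 + x_3^2 - x_1x_2x_3 = \mu$ to force a constraint incompatible with $|x_i| > |t_\mu|$. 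Concretely, $|x_jx_k - x_i| \geq |x_i|$ gives $|x_jx_k| \geq |x_i| \cdot |1 - x_jx_k/x_i^{-1}...|$ — better, squaring, $|x_jx_k|^2 - 2\Re(\bar{x}_i x_j x_k) \geq 0$, so $\Re(\bar{x}_i x_j x_k) \leq \tfrac12 |x_jx_k|^2 \leq \tfrac12 |x_i|\,|x_j|\,|x_k| \cdot \frac{|x_jx_k|}{|x_i|}$... This needs care; the cleaner route is probably to set $u_i = |x_i|$ and use the vertex equation's real part: $\Re(x_1^2+x_2^2+x_3^2) - \Re(x_1x_2x_3) = \Re(\mu)$, controlling $|\mu| = |3t_\mu^2 - t_\mu^3|$ from below in terms of $u_1, u_2, u_3$ and showing this lower bound exceeds $|\mu|$ when all $u_i > |t_\mu|$, with equality forcing the symmetric configuration. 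An alternative and perhaps more robust approach follows Bowditch and Tan–Wong–Zhang: descend along the Markoff tree. If every $|x_i| > |t_\mu|$ at the sink, one shows the function $\phi$ has no vertex where the values can decrease (the sink is a strict local minimum configuration), contradicting properness/the fact that Markoff maps attain small values along the tree — but making this quantitative to get the \emph{optimal} constant $|t_\mu|$ rather than some weaker bound is exactly the new content.

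The main obstacle is the upper bound, and specifically extracting the \emph{sharp} constant. The qualitative statement (Lemma \ref{lem:sink}) is already known; what is hard is that the extremal configuration is the symmetric one and that $|t_\mu|$ is precisely the threshold. I expect the key technical lemma to be a statement of the form: if $(x_1, x_2, x_3)$ is a $\Um$-Markoff triple with $|x_jx_k - x_i| \geq |x_i|$ for all cyclic $(i,j,k)$, then $\min_i |x_i| \leq |t_\mu|$, with equality iff $x_1 = x_2 = x_3 = t_\mu$ (up to the appropriate symmetry). Proving this is essentially an optimization problem on $\C^3$ constrained to the Markoff surface, and I would attack it by using the sink inequalities to parametrize or bound the feasible region, then invoking Lemma \ref{lem:Retmu} to pin down where the vertex equation $x_1^2+x_2^2+x_3^2 - x_1x_2x_3 = \mu$ can hold. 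The delicate point is handling the three complex inequalities simultaneously rather than reducing to a single real-variable extremization; I anticipate that passing to moduli loses too much and that one must track arguments, using the borderline case $\Re(t_\mu) = 2 \iff \mu = 4$ from Lemma \ref{lem:Retmu} as the organizing principle.
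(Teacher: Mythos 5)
Your lower bound is exactly the paper's: the symmetric triple $(t_\mu,t_\mu,t_\mu)$ is a $\Um$-Markoff triple, and the sink condition $|t_\mu^2-t_\mu|\geq|t_\mu|$ follows from $\Re(t_\mu)\geq 2$ (Lemma \ref{lem:Retmu}). That half is fine.

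For the upper bound, however, your proposal has a genuine gap: you correctly identify the key technical statement (a sink triple with all $|x_i|>|t_\mu|$ cannot satisfy the vertex equation, with the symmetric triple as the extremal case), but neither of the two routes you sketch actually reaches it, and you say as much yourself ("this needs care", "making this quantitative\dots is exactly the new content"). Passing to moduli and real parts of the vertex equation, as in your first attempt, does lose too much: the three sink inequalities are conditions on the \emph{arguments} of $\bar{x}_i x_j x_k$ relative to $|x_jx_k|^2$, and they do not combine with $\Re(\mu)$ alone to give the sharp threshold. The tree-descent route only reproduces the qualitative Lemma \ref{lem:sink}. The missing idea in the paper is a change of variables that linearizes everything: set $p=x_1/(x_2x_3)$, $q=x_2/(x_1x_3)$, $r=x_3/(x_1x_2)$. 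Then each sink inequality $|x_i|\leq|x_jx_k-x_i|$ becomes exactly the half-plane condition $\Re(p)\leq\tfrac12$ (resp.\ for $q,r$), the vertex equation becomes $p+q+r-1=\mu pqr$ (since $pqr=1/(x_1x_2x_3)$), and the quantity to bound is $|pq|=1/|x_3|^2$. This converts the problem into minimizing $|pq|$ over a domain cut out by three half-planes and one algebraic relation (Lemma \ref{lem:sink1}), which the paper solves by a perturbation argument forcing $|q|=|r|$, a M\"obius-image argument forcing $q=r$, and a one-real-variable boundary analysis pinning the minimum at $p=q=r=1/t_\mu$. Without this reduction (or an equivalent one), your plan does not yield the optimal constant.
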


The theorem can be seen as a consequence of the following technical lemma :

	\begin{Lemma}\label{lem:sink1}
		The minimum of the function $f(p,q,r) = |pq|$ on the domain
		$$\mathcal{D}_\mu = \left\{ (p,q,r) \in \C^3 \ , \ |p|\geq |q| \geq |r|, \Re (p) , \Re (q), \Re (r) \leq \frac 12, p+q+r-1 = \mu pqr \right\} $$
		occurs for $p=q=r$. 
	\end{Lemma}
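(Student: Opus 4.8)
The plan is to treat the statement as a constrained minimisation problem on the real four--dimensional variety $\mathcal V_\mu=\{(p,q,r)\in\C^3: p+q+r-1=\mu pqr\}$, exploiting crucially that the objective $|pq|$ does not involve $r$. \emph{Existence and the diagonal value.} If $|pq|\le c$ on $\mathcal D_\mu$, then $|r|\le|q|\le\sqrt c$, so $1-\mu qr$ stays close to $1$ and the defining relation gives $p=(1-q-r)/(1-\mu qr)$, which is bounded and tends to $1$ as $c\to0$; since $\Re p\le\tfrac12$ this is impossible for small $c$, so the sublevel sets of $|pq|$ on $\mathcal D_\mu$ are compact, nonempty above some positive threshold, and the minimum is attained. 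The triple $(1/t_\mu,1/t_\mu,1/t_\mu)$ lies in $\mathcal D_\mu$: it solves $3\rho-1=\mu\rho^3$ (equivalently $t_\mu^3-3t_\mu^2+\mu=0$ after $\rho=1/t_\mu$), and $\Re(1/t_\mu)=\Re(t_\mu)/|t_\mu|^2\le 1/\Re(t_\mu)\le\tfrac12$ by Lemma~\ref{lem:Retmu}; there $|pq|=|t_\mu|^{-2}$, so it remains to show $|pq|\ge|t_\mu|^{-2}$ throughout $\mathcal D_\mu$.

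\emph{Reduction to the boundary.} Suppose the minimum occurs with all five inequalities strict. Setting $g=p+q+r-1-\mu pqr$, a critical point of the real function $|pq|^2=p\bar p\,q\bar q$ on the complex hypersurface $\{g=0\}$ satisfies, in Wirtinger calculus, $\partial(|pq|^2)=\lambda\,\partial g$ for some $\lambda\in\C$. The $r$--component gives $0=\lambda(1-\mu pq)$; if $\lambda=0$ then $\bar p|q|^2=\bar q|p|^2=0$, forcing $p=0$ or $q=0$, impossible in $\mathcal D_\mu$; hence $\mu pq=1$. Then $g=0$ becomes $p+q=1$, and the two remaining Lagrange equations (simplified using $\mu qr=r/p$ and $\mu pr=r/q$) force $p=q$, so $p=q=\tfrac12$ and $\mu=4$. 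Thus for $\mu\ne4$ the minimum lies on $\partial\mathcal D_\mu$, the union of the five faces $\{\Re p=\tfrac12\}$, $\{\Re q=\tfrac12\}$, $\{\Re r=\tfrac12\}$, $\{|p|=|q|\}$, $\{|q|=|r|\}$; the value $\mu=4$ (where $t_\mu=2$, $|t_\mu|^{-2}=\tfrac14$) is treated separately, the locus $\{p=q=\tfrac12\}\subset\mathcal D_4$ already realising $|pq|=\tfrac14$.

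\emph{Descent through the strata.} One now runs the same Lagrange analysis face by face and on their intersections --- a finite poset of strata ordered by dimension. On $\{|p|=|q|\}$ the critical equations together with $g=0$ force either $p+q=1$, which with $|p|=|q|$ yields $\Re p=\Re q=\tfrac12$ (a deeper face), or else no interior critical point, so the minimum descends; likewise on $\{|q|=|r|\}$. On $\{\Re p=\tfrac12\}$ one writes $p=\tfrac12+ia$, so $|pq|^2=(\tfrac14+a^2)|q|^2$ is minimised, other things equal, at $a=0$; since $g=0$ couples $a$ to $q$ and $r$, this must be argued through the Lagrange conditions on $\{g=0\}\cap\{\Re p=\tfrac12\}$ rather than by a naive perturbation, and again the minimum moves to a lower stratum. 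Carrying this out on every face and intersection, one checks stratum by stratum that $|pq|\ge|t_\mu|^{-2}$, with equality forced only on the diagonal $\{p=q=r\}$, where minimising $|\rho|^2$ over the roots of $\mu\rho^3-3\rho+1=0$ with $\Re\rho\le\tfrac12$ gives exactly $|t_\mu|^{-2}$, attained at $\rho=1/t_\mu$.

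The principal obstacle is precisely this boundary bookkeeping: one must verify that none of the intermediate strata --- above all the three ``$\Re(\cdot)=\tfrac12$'' loci, where the objective does not obviously decrease under the natural deformation --- yields a value of $|pq|$ below $|t_\mu|^{-2}$, and it is exactly the constraint $\Re(\cdot)\le\tfrac12$ (equivalently the sink condition $|1-p|\ge|p|$ for the Markoff--tree neighbour $1-p$) that prevents $q$ and $r$ from collapsing to $0$. An alternative organisation worth keeping in reserve is to pass to Markoff coordinates $x_i$ with $p=x_1/(x_2x_3)$, $q=x_2/(x_1x_3)$, $r=x_3/(x_1x_2)$, so that $|pq|=|x_3|^{-2}$ and the sink conditions read $|x_jx_k-x_i|\ge|x_i|$; one first shows, in the spirit of the proof of Lemma~\ref{lem:Retmu}, that the extremum sits on $\{|x_1|=|x_2|=|x_3|\}$, and there, writing $x_i=Re^{i\theta_i}$, the problem becomes maximising $R$ subject to $\cos(\theta_j+\theta_k-\theta_i)\le R/2$ and the rephrased $\mu$--Markoff equation, which is extremal at $\theta_1=\theta_2=\theta_3$.
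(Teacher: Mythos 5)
Your proposal is a plan rather than a proof: the step that carries all of the difficulty of the lemma --- the analysis on the boundary strata --- is asserted, not executed. Your interior analysis is sound (the Wirtinger--Lagrange computation correctly shows that an interior critical point forces $\mu pq=1$, then $p+q=1$, then $p=q=\tfrac12$ and $\mu=4$), and your compactness argument for the existence of a minimiser is a genuine addition, since the paper takes attainment for granted. But from that point on you write ``one now runs the same Lagrange analysis face by face,'' ``again the minimum moves to a lower stratum,'' and ``carrying this out on every face and intersection, one checks stratum by stratum that $|pq|\ge|t_\mu|^{-2}$'' --- and you yourself flag that ``the principal obstacle is precisely this boundary bookkeeping.'' That obstacle is the lemma. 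Nothing in the proposal shows that the faces $\{\Re p=\tfrac12\}$, $\{\Re q=\tfrac12\}$, $\{\Re r=\tfrac12\}$ (where, as you note, the objective does not obviously decrease under the natural deformation) cannot carry a value of $|pq|$ below $|t_\mu|^{-2}$, nor that the faces $\{|p|=|q|\}$, $\{|q|=|r|\}$ only push the minimum deeper. A stratified Lagrange analysis on a real $4$--dimensional variety with five real inequality constraints has many strata, and on the $\Re(\cdot)=\tfrac12$ faces the multiplier equations do not visibly close up; there is no reason to believe each stratum resolves cleanly without an explicit computation.

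For comparison, the paper's proof makes each of these reductions concrete and in a different order: it first shows $|q|=|r|$ at the minimum by the explicit scaling perturbation $(p,q)\mapsto((1-\varepsilon)p,(1-\varepsilon)q)$ with $r$ adjusted to preserve the constraint (this strictly decreases $|pq|$ while staying in $\mathcal{D}_\mu$ for small $\varepsilon$); it then fixes $|q|=|r|=R$ and observes that the two partial maps $x\mapsto f(x,y)$ and $y\mapsto f(x,y)$ of $f(x,y)=\frac{1-x-y}{1-\mu xy}$ are M\"obius transformations sending the circle of radius $R$ to one and the same circle, so the boundary of the image of the torus $|x|=|y|=R$ is the diagonal image $\{f(x,x)\}$, forcing $q=r$; finally it reduces to the one--variable function $F(y)=y\frac{1-2y}{1-\mu y^2}$, pushes the minimum to the boundary of its domain by the maximum principle, and on the face $\Re\bigl(\frac{1-2y}{1-\mu y^2}\bigr)=\tfrac12$ solves explicitly for $y_\pm(\tfrac12+i\beta)$ and verifies by direct computation that $|F|>|\tau_\mu|^2$ there unless $\mu=4$. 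It is exactly these three concrete verifications --- in particular the last explicit computation on the $\Re=\tfrac12$ locus --- that your proposal defers, so as it stands the argument has a genuine gap.
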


	\begin{proof}[Proof of Theorem \ref{thm:sink1} using Lemma \ref{lem:sink1}]
		
		Let $\phi \in {\bf \Phi}_{\Um}$, and assume that $X_1,X_2,X_3 \in \Omega$ are three regions meeting at a sink for $\phi$. We denote $x_i = \phi (X_i) $. Then, the directions of the three arrows incident to the sink give the following three inequalities : 
	\begin{equation}\label{eqn:sink}
		|x_1|\leq|x_2x_3-x_1|, \quad |x_2| \leq |x_1x_3 - x_2|, \quad |x_3| \leq |x_1x_2-x_3|
	\end{equation}
	Without loss of generality, we can assume that $|x_1| \geq |x_2| \geq |x_3|$. We now have to prove that $|x_3| \leq |t_\mu|$. We can assume that $x_3 \neq 0$, for otherwise $\min_i \{ |x_i| \} = 0$. 
	
	We make a change of variables to use the Lemma \ref{lem:sink1}. We set $z_1 = \frac{x_1}{x_2x_3}$, $z_2 = \frac{x_2}{x_3x_1}$ and $z_3 = \frac{x_3}{x_1x_2}$. The vertex equation (\ref{eqn:vertex}) becomes 
	\begin{equation}\label{eqn:vertexpqr} z_1+z_2+z_3-1 = \mu z_1z_2z_3 \end{equation}
	These new variables also satisfy $|z_1| \geq |z_2| \geq |z_3|$ and $|z_1z_2| = \frac{1}{|x_3|^2}$. And the inequalities  (\ref{eqn:sink}) now become $\Re (z_1) \leq \frac 12$, $\Re (z_2) \leq \frac 12$, and $\Re (z_3) \leq \frac 12$.
		
		So the triple $(z_1,z_2,z_3)$ is in $\mathcal{D}_\mu$, hence $|z_1z_2|$ is larger than the minimum of  $f(p,q,r)$ on the domain $\mathcal{D}_\mu$. Lemma \ref{lem:sink1} implies that this minimum is obtained when $p=q=r$. This means that this minimum is equal than the smallest root $\tau_\mu$ of the polynomial equation $3X - 1 - \mu X^3 = 0$. A simple change of variable in that equation shows that $\tau_\mu = \frac{1}{t_\mu}$.
	
	So we have $\frac{1}{|x_3|^2} = |z_1 z_2| \geq \frac{1}{|t_\mu|^2}$, and hence  $|x_3| \leq |t_\mu|$ as wanted, so we know that $M( \Um) \leq | t_\mu |$. 
	
	Moreover, the $\Um$-triple $(t_\mu, t_\mu , t_ \mu)$ defines a $\Um$-Markoff map where the initial vertex is a sink. Indeed, as $|t_\mu| \geq 2$ from Lemma \ref{lem:Retmu}, we get that $|t_\mu^2 - t_\mu| \geq ||t_\mu|^2 - |t_\mu|| \geq  |t_\mu|$. Hence $M(\Um) \geq |t_\mu |$, which ends the proof of the Theorem.
	\end{proof}
	
	Before getting to the proof of Lemma \ref{lem:sink1} which is rather technical, we start with the following remark. Let $\mu \neq 4$ and $\tau_\mu$ is the smallest root (in modulus) of the polynomial equation $3X - 1 - \mu X^3 = 0$. As $\tau_\mu = \frac{1}{t_\mu}$ and $\Re (t_\mu) > 2$, we have that $\tau_\mu$ is included in the disk of diameter $[0, \frac 12]$ and hence $\Re (\tau_\mu) \leq \frac 12$. This means that the triple $(\tau_\mu, \tau_\mu, \tau_\mu)$ is in $\mathcal{D}_\mu$, hence we know that the minimum of the function $f(p,q,r) = |pq|$ on the domain $\mathcal{D}_\mu$ is less or equal to $|\tau_\mu|^2$.

	\begin{proof}[Proof of Lemma \ref{lem:sink1}]\	
	Let $(p,q,r) \in \mathcal{D}_\mu$  realizing the minimum of the function $f$ on the domain $\mathcal{D}_\mu$. We will prove the equality $p=q=r = \tau_\mu$ with several intermediate steps.

		\begin{enumerate}
			\item Step 1 : We show that $|q| = |r| < \frac 12$.
			
			The previous remark shows that $|pq| \leq |\tau_\mu|^2$. As $|p| \geq |q|$, we have directly that $|q| \leq |\tau_\mu| < \frac 12$. 

	Assume by contradiction that $|q| > |r|$. Then for $\epsilon >0$, we consider $p' = p'(\varepsilon) = (1-\varepsilon)p$ and $q' = q' (\varepsilon) = (1-\varepsilon)q$. We let $r' = r' (\varepsilon)$ be the unique complex number such that $p'+q'+r' - 1 = \mu p'q'r'$.  By continuity of $r'$ with respect to $\varepsilon$, we get that for $\varepsilon$ small enough, we have $(p',q',r') \in \mathcal{D}$ and $|p'q'|<|pq|$, which contradicts the minimality. Hence $|q| = |r|$ and this ends the proof of this first step.


			\item Step 2 : We show that $q = r$.
			
			 Let $0<R < \frac 12 $ such that $|q| = |r| = R$. From the equation (\ref{eqn:vertexpqr}) we have that $p = \frac{1-q-r}{1- \mu qr}$.
			 
			 Hence we can study the function  $\zeta(x,y) = \dfrac{1-x-y}{1-\mu xy}$ on the set 
			 $$K = \left\{ (x,y) \in\C , |x| = |y| = R \right\} .$$ 
			 
			From step 1, it is clear that the minimum of $|\zeta|$ on the domain $K$ is equal to $|p|$, so we will show that this minimum is attained when $x = y$.
			
			 The image $\zeta(K)$ is a compact set in $\C$. As $|x|,|y| < \frac 12$ we see that this image avoids $0$, hence by the maximum principle, the minimum of $|\zeta|$ occurs on the boundary of $\zeta (K)$. The partial functions $\zeta(x, \cdot)$ and $\zeta(\cdot, y)$ are Möbius maps.  This means that for $x$ fixed, the image of $\zeta(x,\cdot)$ of the circle of radius $R$, is also a circle denoted $\mathcal{C}_x$. A direct computation shows that $\mathcal{C}_x$ is disjoint from the circle of radius $|\tau_\mu|$ centered at $0$, so we have that for all $(x,y) \in K$,  $|f(x,y)| \geq |\tau_\mu| \geq R $.

By symmetry of the two partial functions, the image of the other partial function $ \zeta (\cdot , x) $ is also the same circle $\mathcal{C}_x$. If a point $P$ is on the boundary of $\zeta (K)$, there exists $(x,y)$ in $K$ such that $P$ is in $\mathcal{C}_x$ and $\mathcal{C}_y$. These two circles cannot intersect transversely as otherwise the point $P$ would be in the interior of $\zeta (K)$. Hence the two circles are tangent, and this necessarily implies that the two circles are equal. This implies that the boundary of $\zeta (K)$ is exactly the set $\{ \zeta (x,x) , |x| = R \}$. As the minimum of $|\zeta (x,y)|$ on $K$ is attained on the boundary, it means that it is attained when $x=y$. This proves that $q = r$ and ends this second step.

			\item Step 3 : We show that $|q| = |\tau_\mu|$.
			
			From the previous step, we have that $q = r$ and hence  $pq = \frac{1-q-r}{1-\mu qr} q = \frac{1-2q}{1-\mu q^2}q$. 
			
			So we study the function $F(y) = y \dfrac{1-2y}{1-\mu y^2} = y \zeta (y,y) $ on the domain 
			$$K' = \left\{ y \in \C \ , \  |y| \leq |\tau_\mu| , \Re \left( \frac{1-2y}{1-\mu y^2} \right) \leq \frac 12 \right\}$$

			 From the previous discussion, the minimum of $|F|$ is precisely the minimum of $f$ on $\mathcal{D}$.			 Again, this function $F$ is bounded away from $0$ on $K'$, so by the maximum principle, the minimum of $|F|$ is attained on one of the boundary of the domain $K'$. So we study the minimum of $|F|$ for each of the two conditions defining the boundary.
			 
			 As $F(\tau_\mu) = \tau_\mu^2$ we can already see that the minimum of $|F(y)|$ on the boundary defined by $|y|=|\tau_\mu|$ is less than or equal to $|\tau_\mu|^2$.

			So, it is sufficient to prove the minimum of $|F|$ on the set  
			$$K'' = \left\{ y \in \C \ , \ \Re \left( \frac{1-2y}{1-\mu y^2} \right) = \frac 12 \right\}$$			
is greater than $|\tau_\mu|^2$.

Let $y \in K''$. There exists a real number $\beta$ such that $\frac{1-2y}{1-\mu y^2} = \frac 12 + i \beta$. So we can express $y$ in terms of $\beta$ as follows.
			$$y_\pm (\beta) = \dfrac{1 \pm \sqrt{1 - \mu (\frac 12 + i \beta)+ \mu (\frac 12 + i \beta)^2}}{\mu (\frac 12 + i \beta)} \in K''$$

			So, to get the minimum of $|F|$ on $K''$,  we only have to consider the two functions $\psi_+$ and $\psi_-$ of a single real variable $\beta$ defined by $\psi_\pm (\beta) = |F(y_\pm (\beta))| = \left| \frac 12 + i \beta \right| |y_\pm (\beta ) |$. A straightforward but tedious computation proves that the minimum of both these functions occurs at $\beta=0$ and is equal to
			
			$$\psi_\pm (0) = \left\vert \dfrac{1 \pm \sqrt{1 - \frac{\mu}{4} } }{\mu} \right\vert$$
			
		Finally, using explicit formulas for $\tau_\mu$ as a solution of a cubic equation, one can infer that $\psi_\pm (0)  \geq |\tau_\mu|^2$.
			
		Note that this last inequality is an equality if and only if $\mu = 4$. And as we assumed that $\mu \neq 4$, it implies that the minimum of $|F|$ is only attained on the boundary defined by $|y| = |\tau_\mu|$, and hence $|q| = |\tau_\mu|$.
		
		\item Step 4 : We conclude that $p=q=r = \tau_\mu$.
		
			 We already proved that if $|y| = |\tau_\mu|$, then $|\zeta(y,y)| \geq |\tau_\mu|$, and hence $|F(y)| \geq |\tau_\mu|^2$.  So we see that the minimum of $|F|$ is attained when $y = \tau_\mu$. 
		
		So this means that $q = r = \tau_\mu$, which implies that $p = \tau_\mu$, and conlcudes the proof of the Lemma.

\end{enumerate}
\end{proof}

		\subsubsection{Upper bound for the minimum of a Markoff map}

%

The explicit value of the sink constant gives rise to a more general statement on the minimum value for any Markoff map.  
		
		\begin{Theorem}\label{thm:region} Let $\mu \neq 4$ and $\phi$ be a $(0,0,0,\mu)$-Markoff map. Then there exists  $X \in \Omega$ such that $|\phi (X) | \leq |t_\mu|$.
		\end{Theorem}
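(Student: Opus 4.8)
The plan is to argue by a dichotomy on whether the Markoff map $\phi$ admits a sink. If $\phi$ has a sink, let $X_1,X_2,X_3$ be the three regions meeting that vertex. By Theorem \ref{thm:sink1} the sink constant equals $|t_\mu|$, and in particular (combining Lemma \ref{lem:sink} with the optimality bound obtained in the course of proving Theorem \ref{thm:sink1}) one gets $\min_i |\phi(X_i)| \leq |t_\mu|$. Taking $X$ to be the region realizing this minimum settles this case immediately; note that here $\mu\neq 4$ is not even needed.

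The substantive case is when $\phi$ has no sink, and here I would reduce to the Proposition above (\cite{tan_gen}, Lemma 3.11) by exhibiting an infinite attracting ray. The point is the elementary tree argument: fix any vertex $v_0$; since it is not a sink, some incident edge $e_0$ carries an arrow pointing away from $v_0$, and let $v_1$ be its other endpoint. Inductively, having arrived at $v_{n+1}$ along an edge $e_n$ whose arrow points towards $v_{n+1}$, non-sinkness of $v_{n+1}$ forces one of the two remaining incident edges, say $e_{n+1}$, to point away from $v_{n+1}$, and $e_{n+1}\neq e_n$. Since $\Sigma$ is a tree, a path that never backtracks is embedded, so $(e_n)_{n\in\N}$ is an infinite ray on which the arrow of each $e_n$ is directed towards $e_n\cap e_{n+1}=v_{n+1}$, exactly the hypothesis of the Proposition. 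Applying it (this is where $\mu\neq 4$ enters) yields a region $X$ with $|\phi(X)|<2$. Finally, Lemma \ref{lem:Retmu} gives $|t_\mu|\geq \Re(t_\mu)\geq 2$, so $|\phi(X)|<2\leq|t_\mu|$, which concludes the proof.

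I do not expect a genuine obstacle here: the theorem is essentially bookkeeping once Theorem \ref{thm:sink1} and the cited Proposition are available. The only points requiring a little care are the construction of the attracting ray from the absence of sinks — in particular checking that the path cannot close up or backtrack, which is automatic in a tree — and the treatment of degenerate edges where $|z|=|w|$ and the arrow points both ways; such an edge still "points towards" each endpoint, so a vertex with no strictly-outgoing edge is a sink, and the inductive step above remains valid. These are minor verifications rather than real difficulties.
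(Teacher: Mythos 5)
Your proof is correct and follows essentially the same route as the paper: a dichotomy between the existence of a sink (handled by Theorem \ref{thm:sink1}) and the existence of an infinite descending ray (handled by the cited Proposition together with $|t_\mu|\geq 2$ from Lemma \ref{lem:Retmu}); the paper phrases this as following a maximal flow from a base vertex, but the two branches and the ingredients are identical. Your extra care about non-backtracking in the tree and about double arrows at edges with $|z|=|w|$ is a welcome tightening of details the paper leaves implicit.
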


		\begin{proof}
			Start from any vertex $v_0 \in \mathcal{T}$. If this vertex is not a sink, there exist another vertex $v_1$ adjacent to $v_0$ such that the edge from $v_0$ to $v_1$ is oriented towards $v_1$ with the orientation given by $\phi$. Continuing this process allows us to construct a sequence $(v_n)$ of vertices with the property that $v_{n+1}$ is a vertex adjacent to $v_n$ and the orientation given by $\phi$ of the edge from $v_n$ to $v_{n+1}$ is oriented towards $v_{n+1}$. We assume that we have a maximal sequence, so there are only two behaviors that can occur :
			\begin{itemize}
				\item The sequence terminates at some vertex, and cannot be continued. In that case, this last vertex is necessarily a sink. So using Theorem \ref{thm:sink1}, we have that one of the regions $X$ around that terminal sink is such that $|\phi (X) | \leq |t_\mu|$
				\item The sequence is infinite, in which case we have a so-called \emph{escaping ray}, and hence we can apply the result (see \cite{tan_gen}, Lemma 3.11) stating that along such an infinite oriented path, there exists at least one region $X \in \Omega$ such that $|\phi (X) | <2$, and hence $|\phi (X)| \leq |t_\mu |$.
					
			\end{itemize}		
		\end{proof}

	\begin{Remark}
	
		In the case $\mu = 4$, the Theorem no longer applies. Indeed, $t_4 = 2$,  but there are $(0,0,0,4)$-Markoff maps such that for all regions $X$ we have $|\phi (X) | > 2$. 
		
		Note that if $\phi$ is such a Markoff map, then no vertex is a sink for $\phi$, and we have $\inf \{ |\phi (X) | , X \in \Omega \}  = 2 = t_4$.
		
	\end{Remark}

		\subsubsection{Real Case}

When $\Um \in \R^4$, one can consider real $\Um$-Markoff maps $\phi : \Omega  \longrightarrow \R$, and we denote by ${\bf \Phi}_{\Um}^\R$ the set of such maps. 

When $\mu \in \R$, we have precise information on the real roots of the polynomial equation $X^3 - 3X^2 + \mu = 0$. The discriminant of such an equation is 
$$\Delta  = 108\mu - 27 \mu^2 = 27 \mu (4-\mu)$$
So we can easily distinguish 5 possible cases depending on the value of $\mu$.

\begin{Proposition}\label{prop:realroots} The real roots of $X^3 - 3X^2 + \mu = 0$ satisfy the following properties :
\begin{enumerate}
	\item If $\mu < 0$, then the equation has a unique real solution. This solution is positive and greater than $3$.
	\item If $\mu = 0$, then there are two real solutions : $0$ and $3$.
	\item If $\mu \in ]0,4[$, then $(E_\mu)$ has three real solutions. Exactly one of them is in $]2,3[$ and the other two are in $]-2, 2[$.
	\item If $\mu = 4$, then there are two real solutions : $-1$ and $2$.
	\item If $\mu > 4$, then this equation has a unique real solution, which is negative. In that case $|t'_\mu| < 2$ if and only if $\mu < 20$.
		
\end{enumerate}
\end{Proposition}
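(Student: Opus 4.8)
The plan is to treat this as a routine study of the real cubic $g(X) = X^3 - 3X^2 + \mu$ via its derivative. First I would compute $g'(X) = 3X^2 - 6X = 3X(X-2)$, so that $g$ is strictly increasing on $(-\infty,0]$, strictly decreasing on $[0,2]$, and strictly increasing on $[2,+\infty)$, with $g(X)\to-\infty$ as $X\to-\infty$ and $g(X)\to+\infty$ as $X\to+\infty$. The local maximum is $g(0)=\mu$ and the local minimum is $g(2)=8-12+\mu=\mu-4$.

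From this monotonicity picture, counting the real roots (with multiplicity) reduces to reading off the signs of the two extremal values $g(0)=\mu$ and $g(2)=\mu-4$. There are three distinct real roots, one in each of $(-\infty,0)$, $(0,2)$, $(2,+\infty)$, exactly when $\mu>0$ and $\mu-4<0$, i.e.\ case (3); there is a double root precisely when one extremal value vanishes, i.e.\ $\mu=0$ (case (2)) or $\mu=4$ (case (4)); and there is a single simple real root otherwise, i.e.\ $\mu<0$ (case (1)) or $\mu>4$ (case (5)). The two degenerate cases are then handled by the explicit factorizations $g(X)=X^2(X-3)$ when $\mu=0$, giving roots $0$ (double) and $3$, and $g(X)=(X-2)^2(X+1)$ when $\mu=4$, giving roots $2$ (double) and $-1$; this settles (2) and (4).

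For the three remaining cases I would locate each root by testing $g$ at $X=-2,0,2,3$, where $g(-2)=\mu-20$, $g(0)=g(3)=\mu$ and $g(2)=\mu-4$. In case (1), $\mu<0$ makes $g$ negative at $0$, $2$ and $3$ while $g\to+\infty$, so the unique real root lies in $(3,+\infty)$ and is in particular positive. In case (3), $0<\mu<4$ gives $g(-2)=\mu-20<0<g(0)=\mu$, $g(2)=\mu-4<0$ and $g(3)=\mu>0$, which forces one root in $(-2,0)$, one in $(0,2)$ and one in $(2,3)$; hence exactly one root lies in $(2,3)$ and the other two lie in $(-2,2)$. In case (5), $\mu>4$ makes both extremal values $g(0)$ and $g(2)$ positive, so $g>0$ on $[0,+\infty)$ and the unique real root $t'_\mu$ is negative; since $g$ is strictly increasing on $(-\infty,0)$ and $g(-2)=\mu-20$, we get $t'_\mu>-2$ if and only if $g(-2)<0$, i.e.\ if and only if $\mu<20$, and since $t'_\mu<0$ this is exactly the assertion $|t'_\mu|<2 \iff \mu<20$.

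I do not expect any genuine obstacle here: the whole argument is an application of the intermediate value theorem to a cubic with fully understood monotonicity. The only points requiring a little care are keeping the signs of $g$ at the test points $-2$, $2$, $3$ correctly aligned with the interval containing $\mu$, and, for the last equivalence, using that $g$ is \emph{increasing} (not decreasing) to the left of $0$, so that $g(-2)<0$ translates into $t'_\mu>-2$ rather than its opposite.
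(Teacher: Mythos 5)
Your argument is correct and complete: the sign analysis of $g(0)=\mu$ and $g(2)=\mu-4$ at the critical points of $g(X)=X^3-3X^2+\mu$, the explicit factorizations $X^2(X-3)$ and $(X-2)^2(X+1)$ in the degenerate cases, and the test values $g(-2)=\mu-20$, $g(3)=\mu$ settle every assertion, including the equivalence $|t'_\mu|<2 \iff \mu<20$ in case (5). The paper states this Proposition without proof, treating it as a routine consequence of the discriminant computation $\Delta = 27\mu(4-\mu)$; your intermediate-value-theorem verification supplies exactly the missing details.
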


Now, one can refine the Theorem \ref{thm:sink1} in the case $\Um = (0,0,0,\mu)$ with $\mu \in \R$ and for real Markoff maps. 

\begin{Theorem}\label{thm:sinkreal} Let $\mu \in \R \setminus \{ 4 \}$. Let $t'_\mu$ be the largest (in absolute value) real root of the polynomial $X^3 - 3X^2 +\mu = 0$. Then for all $\phi \in {\bf \Phi}_{\Um}^\R$, if three regions $(X,Y,Z)$ meet at a sink, then :
$$\min \{ |x| , |y| , |z| \} \leq \max ( |t'_\mu|, 2)  $$
\end{Theorem}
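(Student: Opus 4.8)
The plan is to adapt the proof of Theorem~\ref{thm:sink1}, exploiting the extra rigidity that comes with $\phi$ being real. Suppose $\phi\in{\bf\Phi}_{\Um}^\R$ has a sink at $X_1,X_2,X_3$; write $x_i=\phi(X_i)\in\R$ and relabel so that $|x_1|\ge|x_2|\ge|x_3|$. If $|x_3|\le 2$ there is nothing to prove, so assume $|x_3|>2$; then no $x_i$ vanishes and we must show $|x_3|\le|t'_\mu|$. As in the proof of Theorem~\ref{thm:sink1} I would set $p=x_1/(x_2x_3)$, $q=x_2/(x_1x_3)$, $r=x_3/(x_1x_2)$, which are real, satisfy $|p|\ge|q|\ge|r|$ and $|pq|=1/x_3^2<1/4$, and for which the sink conditions read $p,q,r\le\tfrac12$ and the vertex equation reads $p+q+r-1=\mu pqr$. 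The decisive new fact is that $\operatorname{sign}(p)=\operatorname{sign}(q)=\operatorname{sign}(r)=\operatorname{sign}(x_1x_2x_3)$, so $p,q,r$ all share one sign; the goal becomes $|pq|\ge 1/\max((t'_\mu)^2,4)$ on this real, one-signed domain $\mathcal{D}^\R_\mu\subseteq\mathcal{D}_\mu$.

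First I would dispose of $\mu<4$. There $X^3-3X^2+\mu$ has its dominant root $t_\mu$ on the real axis, so $t_\mu=t'_\mu$ and $|t_\mu|\ge2$ by Lemma~\ref{lem:Retmu}; since $\mathcal{D}^\R_\mu\subseteq\mathcal{D}_\mu$, Lemma~\ref{lem:sink1} already gives $|pq|\ge|\tau_\mu|^2=1/(t'_\mu)^2$, which suffices. The real content is the case $\mu>4$, in which $t_\mu$ is non-real: $X^3-3X^2+\mu$ then has a unique real root $t'_\mu$, it satisfies $t'_\mu<-1$, and $|t_\mu|^2=(t'_\mu)^2-3t'_\mu>(t'_\mu)^2$, so Lemma~\ref{lem:sink1} is now strictly too weak. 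The first thing to notice is that $\mu>4$ forces the common sign of $p,q,r$ to be negative: writing $p=\tfrac12-a$, $q=\tfrac12-b$, $r=\tfrac12-c$ with $a,b,c\in[0,\tfrac12)$, the identity $4pqr-(p+q+r-1)=2(ab+bc+ca-2abc)$ has non-negative right-hand side, so $\mu=(p+q+r-1)/(pqr)\le4$ whenever $p,q,r\in(0,\tfrac12]$; hence no positive one-signed triple exists.

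I would then re-run the minimisation of Lemma~\ref{lem:sink1} over $\mathcal{D}^\R_\mu$. A minimum exists (the compactness argument of Lemma~\ref{lem:sink1} applies, the condition $p\le\tfrac12$ keeping things away from the poles). The rescaling perturbation $p\mapsto(1-\varepsilon)p$, $q\mapsto(1-\varepsilon)q$ — with $r$ re-solved from the vertex equation, which stays real and, for small $\varepsilon$, keeps the common sign — forces $|q|=|r|$ at the minimum; and because $q,r$ now have the same sign, this at once gives $q=r$, so the M\"obius-map argument of the complex case is not needed. Substituting $p=(1-2q)/(1-\mu q^2)$ reduces everything to minimising the single real variable function $F(q)=pq=q\,(1-2q)/(1-\mu q^2)$ over negative $q$. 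Unwinding the constraints $F(q)>0$, $q\le\tfrac12$, $p\le\tfrac12$, $|q|\le|p|$ identifies the admissible set as the interval $[\tau'_\mu,-1/\sqrt\mu)$, with $\tau'_\mu=1/t'_\mu$; the lower endpoint is exactly the constraint $|q|\le|p|$, which rewrites as $\mu q^3-3q+1\ge0$, i.e. $q\ge\tau'_\mu$. Finally one computes $F'(q)=\dfrac{\mu q^2-4q+1}{(1-\mu q^2)^2}$, whose numerator, being a quadratic with discriminant $16-4\mu<0$ and positive leading coefficient, is everywhere positive; so $F$ is strictly increasing on the admissible interval and attains its minimum at $\tau'_\mu$, where $\mu(\tau'_\mu)^3=3\tau'_\mu-1$ simplifies $F(\tau'_\mu)$ to $(\tau'_\mu)^2=1/(t'_\mu)^2$. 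Thus $|pq|=F(q)\ge1/(t'_\mu)^2$, i.e. $|x_3|\le|t'_\mu|\le\max(|t'_\mu|,2)$.

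The calculations above are all elementary, and the part requiring care rather than effort is the structural one: realising that real Markoff maps force $p,q,r$ to share a sign, which (i) trivialises the implication $|q|=|r|\Rightarrow q=r$ and (ii) through the identity above kills all positive one-signed triples when $\mu>4$, so that the entire non-real case reduces to monotonicity of $F$ on the single interval $[\tau'_\mu,-1/\sqrt\mu)$. The one genuinely fiddly point is unwinding the constraints into this interval (and checking that, for $\mu>4$, the degenerate configurations with $1-\mu pq=0$ or $1-\mu q^2=0$ do not lie on $\mathcal{D}^\R_\mu$). I note in passing that the argument in fact yields the sharper statement $\min\{|x|,|y|,|z|\}\le|t'_\mu|$; the ``$2$'' appears in the theorem only to cover the range $-2<t'_\mu<-1$ and to make the bound uniform in $\mu$.
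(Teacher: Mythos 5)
Your argument is correct, but it takes a genuinely different route from the paper's. The paper works directly in the $(x,y,z)$ coordinates: it defines the sink domain $\scc$ by the conditions $x/(yz)\leq \frac12$ etc., reduces to same-sign triples via the symmetries $f(x,y,z)=f(-x,-y,z)$ of $f=x^2+y^2+z^2-xyz$, observes that $\nabla f$ is coordinatewise non-positive on the same-sign part of $\scc$, and derives a contradiction by transporting $f$ along coordinatewise-monotone (jagged) paths from $(t'_\mu,t'_\mu,t'_\mu)$ or $(2,2,2)$ to a hypothetical triple with all entries larger than $\max(|t'_\mu|,2)$. You instead push the reciprocal substitution $p=x_1/(x_2x_3)$, etc.\ of Lemma \ref{lem:sink1} into the real setting, and the two structural observations you isolate — that $p,q,r$ automatically share the sign of $x_1x_2x_3$, and that the identity $4pqr-(p+q+r-1)=2(ab+bc+ca-2abc)\geq 0$ excludes all-positive triples when $\mu>4$ — do real work: they collapse the M\"obius-circle step of the complex proof and reduce everything to the monotonicity of the one-variable function $F(q)$ via the sign of $\mu q^2-4q+1$. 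What the paper's approach buys is that it never needs a minimizer to exist (the path argument is a direct contradiction); what yours buys is uniformity with the complex case of Theorem \ref{thm:sink1} and, as you note, a sharper conclusion in the range $4<\mu<20$.

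Two small caveats. First, your reliance on the existence of a minimizer of $|pq|$ on $\mathcal{D}^\R_\mu$ inherits exactly the compactness gap already present in the paper's Lemma \ref{lem:sink1}; your one-variable endgame could be made to avoid it entirely (once $q=r$ is forced you have an explicit monotone function on an explicit interval), but the reduction to $q=r$ still invokes the minimizer, so you should either justify that sublevel sets of $|pq|$ in $\mathcal{D}^\R_\mu$ are compact or restructure. Second, your closing remark that the argument ``in fact yields $\min\{|x|,|y|,|z|\}\leq|t'_\mu|$'' is not delivered by the proof as written, since you open with ``if $|x_3|\leq 2$ there is nothing to prove''; the sharper statement does follow, but only after replacing that case split by the weaker dichotomy $x_3=0$ versus $x_3\neq 0$, under which the rest of your analysis goes through unchanged.
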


\begin{proof} We use a different strategy for this Theorem and take advantage of the fact that we are dealing with real functions.

We consider the set

$$\scc = \left\{ (x,y,z) \in (\R\setminus \{0\} )^3 \ | \  \frac{x}{yz}\leq\frac 12 , \frac{y}{xz} \leq \frac 12, \frac{z}{xy} \leq \frac 12 \right\}$$

This set corresponds to triple of non-zero real numbers such that if a Markoff map has values $x,y,z$ around a vertex, then this vertex is a sink. Hence we will call $\scc$ the \emph{sink domain}.

First, we can see that for all $\mu \in \R$, the triple $(t'_\mu , t'_\mu , t'_\mu) \in \scc$. Indeed, if $\mu >4$, then $t'_\mu <0$ is negative and all the equations are trivially satisfied because $\frac{1}{t'_\mu} <0 < \frac 12$. On the other hand, if $\mu \leq 4$, then $t'_\mu \geq 2$ and hence $\frac{1}{t'_\mu} \leq \frac 12$. We can also see trivially that $(2,2,2) \in \scc$.

Let $f (x,y,z) = x^2 + y^2 + z^2 - xyz$. We compute the gradient of the function $f$, so we get 
$$\nabla f = \begin{pmatrix} 2x - yz \\ 2y -xz \\ 2z - xy \end{pmatrix}$$
Note that if $(x,y,z) \in \scc$ and $x,y,z$ are all of the same sign, then each coordinates of $\nabla f$ is negative at $(x,y,z)$. We have $\nabla f = (0,0,0)$ if and only if $x=y=z=2$.  

Now, let $(x,y,z) \in \scc $ such that $f(x,y,z) = \mu$, assume by contradiction that $|x|, |y|, |z| > \max (|t'_\mu|, 2)$. 

As $f(x,y,z) = f(-x, -y, z) = f(-x, y, -z)  = f(x, -y, -z)$, we can assume without loss of generality that $x,y,z$ are all of the same sign.

\begin{itemize}
	\item If $xyz < 0$, then we have  $\mu = f(x,y,z) = x^2 + y^2 + z^2 - xyz \geq 20 >4 $. This implies that $t_\mu < 0$, and hence $(t'_\mu, t'_\mu, t'_\mu)$ and $(x,y,z)$ are both in $]- \infty , 0 [^3$. Consider the straight path $\nu : [0,1] \rightarrow \R^3$ from such that $\nu(0) = (t'_\mu, t'_\mu, t'_\mu)$ and $\nu (1) = (x,y,z)$. This path is entirely contained in $\scc$ and hence $f\circ \nu$ is a strictly increasing function. This means that $f(x,y,z) > f(t'_\mu, t'_\mu, t'_\mu)  = \mu$, which gives a contradiction.
	\item If $xyz >0$, assume without loss of generality that $2<x\leq y\leq z$ then consider the jagged path $\nu: [0,1] \rightarrow \R^3$ defined by three straight path :
	$$(2,2,2) \rightarrow (x,x,x) \rightarrow (x,y,y) \rightarrow (x,y,z)$$
	This path is strictly increasing in each variable and stays in $\scc$. Hence $f \circ \nu$ is a strictly decreasing function. So $f(x,y,z) < f(2,2,2)=4$, which implies that $\mu < 4$ and $t'_\mu > 2$.
	
	Now, that same path $\nu$ also joins $(t'_\mu , t'_\mu, t'_\mu)$ to $  \rightarrow (x,y,z) $ . By the same argument, we have $f(x,y,z) < f(t'_\mu ,t'_\mu , t'_\mu) = \mu$, which gives a contradiction.
\end{itemize}

\end{proof}

\begin{Remark}
When $\mu > 20$ we have $|t'_\mu| < |t_\mu|$ and hence this theorem is stronger than the previous one in the case of real Markoff maps. For an explicit example, consider the case $\mu = 54$. The only real root of the equation $X^3-3X^2+54 = 0$ is $t'_\mu = -3$, however, a dominant complex root is given by $t_\mu = 3+3i$, and hence $|t'_\mu| < |t_\mu|$. 
\end{Remark}

	\subsection{General case}\label{sub:general_case}
	
	We now consider the general case of $\Um  \in \C^4$, where the previous arguments cannot work as we can see in the following example.

\begin{Example}
Let $\Um= (-50, 30, 50, 0)$. In that case, the triple $(-10, -10, 10)$ is a $\Um$--Markoff triple, and it's easy to check that this triple corresponds to a sink. However the largest root of 
$$X^3 - 3X^2 - (-50+30+50)X = 0$$
is $\frac 12 (3+\sqrt{129} ) \approx 7.17... < 10$.

Moreover, the $\Um$-Markoff triple $(t_\mu, t_\mu, t_\mu)$ is not a sink. 

\end{Example}
	
	This suggests that the naive generalization of previous results is false, and the optimal constant in the general case could be much more difficult to obtain. But nonetheless, we can adapt the proof of Theorem \ref{thm:sinkreal} and see that there are certain cases of geometrical significance that can be studied with similar methods. In particular, we will consider the following set of parameters :

$$U = [0, +\infty [^3 \times ]-\infty , 4] \subset \R^4$$
	
	\begin{Definition}
		Let $\Um \in U$. A Markoff map $\phi \in {\bf \Phi}_{\Um}$ is said to be \emph{positive} if $\rm{Im} (\phi) \subset \R_+$. We denote by ${\bf \Phi}_{\Um}^{\geq 0}$ the set of positive Markoff maps.
	\end{Definition}

	\begin{Theorem}\label{thm:sink4pos}
	Let $\Um=(\lambda_1,\lambda_2, \lambda_3, s)$ in $U$. For all $\phi \in {\bf \Phi}_{\Um}^{\geq 0}$, if three regions $X_1,X_2,X_3$ meet at a sink, then 
	$$\min \{ x_1 , x_2 , x_3 \} \leq T_{\Um} $$
	where $T_{\Um}$ is the largest positive real root of $X^3-3X^2 - (\lambda_1+\lambda_2+\lambda_3)X + s = 0$.
\end{Theorem}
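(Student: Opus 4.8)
The plan is to mimic the strategy of the proof of Theorem \ref{thm:sinkreal} (the "jagged path" argument), adapted to the presence of the linear terms $\lambda_i x_i$ and to the positivity hypothesis. Set $f(x_1,x_2,x_3) = x_1^2+x_2^2+x_3^2 - x_1x_2x_3 + \lambda_1 x_1 + \lambda_2 x_2 + \lambda_3 x_3$, so that a $\Um$-Markoff triple at a vertex satisfies $f(x_1,x_2,x_3) = s$. Define the (positive) sink domain
$$\scc^{\geq 0} = \left\{ (x_1,x_2,x_3) \in \R_{>0}^3 \ \Big| \ \tfrac{x_1}{x_2x_3}\leq \tfrac12,\ \tfrac{x_2}{x_1x_3}\leq \tfrac12,\ \tfrac{x_3}{x_1x_2}\leq \tfrac12 \right\},$$
translating the sink conditions $|x_i| \leq |x_jx_k - x_i - \lambda_i|$ — note that for $\phi$ positive and $\lambda_i \geq 0$ one has $x_jx_k - x_i - \lambda_i \geq x_jx_k - x_i$, so the honest sink inequality is already implied by $x_i \leq \tfrac12 x_jx_k$; this is the point where positivity of both $\phi$ and the $\lambda_i$ is used to get a clean, $\Um$-independent domain. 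First I would check that the reference triple $(T_\Um, T_\Um, T_\Um)$ lies in $\scc^{\geq 0}$: since $T_\Um > 0$ and $T_\Um$ is a root of $X^3 - 3X^2 - (\textstyle\sum\lambda_i)X + s$, one verifies $3T_\Um - T_\Um^{-1}(\textstyle\sum\lambda_i)\cdot T_\Um^{-1}\cdots \geq $ — more simply, from the cubic, $T_\Um^2 = 3T_\Um + \textstyle\sum\lambda_i - s/T_\Um$, and using $s \leq 4$ and $\lambda_i \geq 0$ one deduces $T_\Um \geq 2$, hence $T_\Um/(T_\Um^2) = 1/T_\Um \leq \tfrac12$. (If $s$ can be very negative this bound $T_\Um \geq 2$ should be re-examined, but the membership $1/T_\Um \le 1/2$ only needs $T_\Um \ge 2$, which I expect to hold throughout $U$.)

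Next, the gradient computation: $\nabla f = (2x_1 - x_2x_3 + \lambda_1,\ 2x_2 - x_1x_3 + \lambda_2,\ 2x_3 - x_1x_2 + \lambda_3)$. On $\scc^{\geq 0}$ we have $x_jx_k \geq 2x_i$, so each component is $\leq \lambda_i$, but that is not negative in general — so the monotonicity has to be arranged more carefully than in the $\lambda = 0$ case. The fix is to move along coordinate-increasing paths and track the sign of the directional derivative $\partial_{x_i} f = 2x_i - x_jx_k + \lambda_i$ along the specific jagged path. Arguing by contradiction, suppose $\min\{x_1,x_2,x_3\} > T_\Um$; relabel so that $T_\Um < x_1 \leq x_2 \leq x_3$. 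Consider the jagged path
$$(T_\Um, T_\Um, T_\Um) \ \to\ (x_1, T_\Um, T_\Um) \ \to\ (x_1, x_2, T_\Um) \ \to\ (x_1, x_2, x_3),$$
each leg increasing one coordinate. The whole path stays in $\scc^{\geq 0}$ (increasing $x_i$ only strengthens the inequalities $x_j \leq \tfrac12 x_kx_l$ for the other indices, and for the moving index $x_i$ one checks the inequality still holds using $x_i \le x_j \le x_k$ along the leg — this needs the ordering and the starting point $T_\Um \ge 2$). Along the $i$-th leg the relevant derivative is $2x_i - x_jx_k + \lambda_i$ with the other two coordinates fixed at values $\geq T_\Um \geq 2$ and, by the ordering, $\geq x_i$; so $x_jx_k \geq 2x_i$ giving derivative $\leq \lambda_i$ — again not obviously signed. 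Here is where the key inequality must come in: one shows that along the path $x_jx_k$ is in fact large enough that $x_jx_k - 2x_i \geq \lambda_i$, i.e. $x_jx_k \geq 2x_i + \lambda_i$; this is exactly $2\cdot(\text{sink condition})$-type slack, and I expect it to follow from $x_i \le \tfrac12 x_j x_k$ combined with $x_j x_k \ge 4$ (since $x_j, x_k \ge 2$) once $\lambda_i \le \tfrac12 x_j x_k$ — which holds as long as $x_j x_k$ dominates $\lambda_i$, true for the $(x_i)$ large by assumption but needing a short argument near the start of the path.

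So the main obstacle — and the step I would spend the most care on — is verifying that the jagged path stays in $\scc^{\geq 0}$ and that $f$ is strictly monotone (decreasing) along it, because unlike the $\lambda=0$ case the gradient sign is no longer automatic and genuinely uses the combination of the sink inequalities, the ordering $x_1 \le x_2 \le x_3$, and the bound $x_i \ge T_\Um \ge 2$. Once strict monotonicity is established, the contradiction is immediate: $f$ strictly decreasing along a path from $(T_\Um,T_\Um,T_\Um)$ to $(x_1,x_2,x_3)$ forces $s = f(x_1,x_2,x_3) < f(T_\Um,T_\Um,T_\Um) = T_\Um^3 - 3T_\Um^2 - \cdots + 3T_\Um^2 = s$ (using that $T_\Um$ solves the cubic, so $f(T_\Um,T_\Um,T_\Um) = 3T_\Um^2 - T_\Um^3 + (\textstyle\sum\lambda_i)T_\Um = -(T_\Um^3 - 3T_\Um^2 - (\textstyle\sum\lambda_i)T_\Um) = s$), a contradiction. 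Finally I would note optimality is not claimed in this theorem (only an upper bound), so no matching example is needed here — though the triple $(T_\Um,T_\Um,T_\Um)$ does realize a sink, which is what makes the bound sharp and underlies Theorem \ref{introthm:sink2}.
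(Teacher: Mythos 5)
Your overall strategy --- jagged coordinate-increasing path, monotonicity of $f$, and the contradiction $s=f(x_1,x_2,x_3)<f(T_\Um,T_\Um,T_\Um)=s$ --- is exactly the paper's, but there is a genuine gap in the middle, and it sits precisely at the step you flag as the one needing the most care. The problem is your choice of domain. You assert that for a positive map with $\lambda_i\geq 0$ the sink inequality $x_i\leq |x_jx_k-x_i-\lambda_i|$ ``is already implied by'' $x_i\leq\tfrac12 x_jx_k$; the implication goes the other way. For positive values the sink condition reads $x_jx_k-2x_i-\lambda_i\geq 0$, which is \emph{stronger} than $x_jx_k\geq 2x_i$. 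By dropping the $\lambda_i$ you enlarge the domain to one on which the gradient component $2x_i-x_jx_k+\lambda_i$ is only bounded above by $\lambda_i$, not by $0$, and your attempted repair --- recovering $x_jx_k\geq 2x_i+\lambda_i$ along the path from $x_jx_k\geq 4$ and ``$x_jx_k$ dominates $\lambda_i$'' --- does not work: $\lambda_i$ ranges over all of $[0,+\infty[$ in $U$, so near the start of the path $x_jx_k$ need not dominate it, and even granting $\lambda_i\leq\tfrac12x_jx_k$ together with $x_i\leq\tfrac12x_jx_k$ you only get $2x_i+\lambda_i\leq\tfrac32x_jx_k$, not $\leq x_jx_k$.

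The fix is simply not to weaken the sink condition when defining the domain. Set
$$\scc=\left\{(x_1,x_2,x_3)\in[2,+\infty[^3 \ \mid\ x_jx_k-2x_i-\lambda_i\geq 0 \ \mbox{ for all } \{i,j,k\}=\{1,2,3\}\right\},$$
which is what the paper does. On this set each gradient component equals $-(x_jx_k-2x_i-\lambda_i)\leq 0$ by definition, so $f$ is nonincreasing along any coordinate-increasing path in $\scc$ with no further argument. The price is that membership of the reference triple is no longer the trivial $1/T_\Um\leq\tfrac12$ you verify: one needs $T_\Um^2-2T_\Um-\lambda_i\geq 0$, which follows from the cubic via $T_\Um(T_\Um^2-2T_\Um-\lambda_i)=T_\Um^2+(\lambda_j+\lambda_k)T_\Um-s>0$, using $T_\Um>2$, $\lambda_j,\lambda_k\geq 0$ and $s\leq 4<T_\Um^2$. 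One also checks (a short monotonicity argument on each leg, using $T_\Um>2$ and the endpoint sink conditions) that the jagged path stays in $\scc$. The remaining ingredients of your write-up --- $T_\Um\geq 2$ from $P(2)=s-4-2\sum\lambda_i\leq 0$, and the identity $f(T_\Um,T_\Um,T_\Um)=s$ --- are correct as stated.
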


	\begin{proof}
	The proof is similar to the proof of Theorem \ref{thm:sinkreal}.
	
	Let $\phi \in {\bf \Phi}_{\Um}^{\geq 0}$ and let $X_1,X_2,X_3 \in \Omega$ be three regions meeting at a sink. The sink condition can be written :
$$|x_i| \leq |x_j x_k-x_i- \lambda_i| , \ \mbox{ with }  \{i,j,k \} = \{1, 2, 3 \} $$

As the Markoff map is positive, we have that both $x_i$ and $x_j x_k-x_i-\lambda_i$ are positive and hence the condition is equivalent to $x_j x_k -  2x_i- \lambda_i \geq 0$. This leads to the following definition of the \emph{Sink domain} as before : 
$$\scc' = \big\{ (x_i, x_j, x_k) \in [2, +\infty[ \, \big| \, x_j x_k -2x_i- \lambda_i \geq 0 , \mbox{ for all }  \{i,j,k \} = \{1, 2, 3 \} \big\}$$

Let $t = T_\mu$ be the largest real root of $ P(X) = X^3-3X^2 - (\lambda_1+\lambda_2+  \lambda_3)X + s $. As $P(2) = s-4 -2(\lambda_1+\lambda_2+\lambda_3) <0$, we know that $t> 2$. Moreover, we see that $(t, t, t) \in \scc'$ as $t(t^2-2t-p) = t^2 + (q+r)t - s >0$.

Now, consider the function $f(x,y,z) = x^2+y^2+z^2-xyz+\lambda_1 x+ \lambda_2 y+ \lambda_3 z$. The gradient of this map is 

$$\nabla f = \begin{pmatrix} 2x-yz+\lambda_1 \\ 2y-xz+\lambda_2 \\ 2z-xy+\lambda_3 \end{pmatrix}$$

So on $\scc'$, the coordinates of the gradient are negative. So if $\min (x_1,x_2,x_3) > t$, then as before we can consider a path $\nu$ from $(t,t,t)$ to $(x_1,x_2,x_3)$ that stays in $\scc'$ and is increasing in each variable. This implies that $f \circ \nu$ is striclty decreasing and hence $f(x_1,x_2,x_3) < f(t,t,t)=s$ which gives a contradiction.
\end{proof}

	 \section{Character Varieties of surface groups}\label{s:char}
	 
	 In this section, we recall the precise relationship between Markoff maps and representations of the fundamental group of the one holed torus, the four-holed sphere and the non-orientable surface of genus $3$. For more details, we refer to \cite{tan_gen, mal_ont}.
	 
	 Note that a natural generalisation of Markoff maps, called markoff quads (\cite{mal_ont2,hua_sim}) can be related to representations of a three-holed projective plane. We will not discuss this case in this article but we expect similar methods to work for finding trace systoles of these representations.

	\subsection{One-holed Torus}
		 
		 Let $T$ be a topological one-holed torus, and $\Gamma$ be its fundamental group. The group $\Gamma = \langle \a , \b \rangle$ is the free group of rank two, and $\a$ and $\b$ correspond to simple closed curves with geometric intersection number one. 

The set $\widehat{ \Omega}$ of free homotopy classes of essential unoriented simple closed curves on $T$ can be naturally identified with $\Q \cup \{ \infty \}$ by considering the ``slope''  of the curve (see \cite{tan_gen}). It is a well-known fact that the curve complex of the one-holed torus is isomorphic to the Farey triangulation $\mathcal{F}$ described in Section \ref{s:mar}. This means that we can naturally identify $\widehat{\Omega}$ with $\Omega$, so that each region in $\Omega$ correspond to a simple closed curve. As a consequence, two regions in $\Omega$ share an edge if and only if the corresponding curves intersect exactly once on $T$. Similarly, if three regions meet at a vertex, then there exists a generating set $(u , v )$ of $\Gamma$ such that the corresponding curves have representatves $u, v , uv \in \Gamma$.

For $k \in \C$, a representation $\rho : \Gamma \rightarrow \mathrm{SL} (2 , \C)$ is said to be a $k$-representation, if $\tr ([\a,\b]) = k$, where $[\a , \b ] = \a \b \a^{-1} \b^{-1}$ is the element corresponding to the boundary curve of $T$. Note that this element is independent of the chosen basis for $\Gamma$. The space of equivalence classes of $k$-representations is denoted $\mathcal{X}_k$ and is called the $k$-relative character variety. 

There is a natural one-to-one correspondance between $\mathcal{X}_k$ and ${\bf \Phi}_{\Um}$ with $\Um =(0,0,0, k+2)$ obtained by fixing a generating set $\a , \b$ for $\Gamma$. Indeed, from this generating set, one can identify $\Omega$ and $\widehat{\Omega}$, and hence a character $\rho : \Gamma \rightarrow \mathrm{SL} (2 , \C)$ gives rise to a map $\phi : \Omega \rightarrow \C$ defined by $\phi (X) = \tr (\rho (g))$ where $g \in \Gamma$ is the representative of the element in $ \widehat{\Omega}$ corresponding to $X$.

The  edge and vertex relations then follow from the classical trace identities in $\mathrm{SL} (2 , \C)$ :
	\begin{eqnarray}
		\tr A \tr B  =& & \tr AB + \tr AB^{-1} ,\\
		(\tr A)^2 + (\tr B)^2 + (\tr AB)^2 - \tr A \tr B \tr AB   =& &  \tr [A,B] +2 = k +2
	\end{eqnarray}

Conversely, any $\Um$--Markoff map $\phi$ gives rise to an equivalence class of representation in $\mathcal{X}_{\mu - 2}$, once given a choice of three adjacent regions $(X, Y, Z)$. Indeed, if we consider the $\mu$ Markoff triple $(x,y,z) = (\phi( X) , \phi ( Y) , \phi (Z))$. We know that $\mathcal{X}_{\mu -2 }$ is identified with 
$$\{ (x,y,z) \in \C^3 \ | \ x^2 + y^2 + z^2 - xyz -2 = \mu -2 \},$$
and hence the triple $(x,y,z)$ defines a unique $k$-character in $\mathcal{X}_k$.

	\subsection{Four-holed Sphere}
	
	Let $S$ be a topological four-holed sphere, and $\Gamma$ be its fundamental group. The group $\Gamma$ admits the following standard presentation
	$$\Gamma = \langle \a , \b , \g , \d \mid \a \b \g \d \rangle$$
	where $\a, \b , \g , \d$ correspond to homotopy classes of the four boundary components. Note that $\Gamma$ is isomorphic to the free group on three generators $\langle \a , \b, \g \rangle$. 
	
	As in the one-holed torus case, the set $\widehat{ \Omega}$ of free homotopy classes of essential unoriented simple closed curves on $T$ can be naturally identified with $\Q \cup \{ \infty \}$ by considering the ``slope''  of the curve, see \cite{mal_ont}.
	
	 For $\Ut = (a,b,c,d) \in \C^4$, a representation $\rho : \Gamma \rightarrow \mathrm{SL} (2 , \C)$ is said to be a $\Ut$-representation, when $\tr (\rho(\a)) = a$, $\tr (\rho (\b )) = b$, $\tr ( \rho ( \g)) = c $ and $\tr (\rho (\d)) = d$.
	  
	 The space of equivalence classes of $\Ut$-representation is denoted $\X_{\Ut}$ and is called the $\Ut$-relative character variety. We can consider the map
	 \begin{align*}
	 	\chi :  \X_{\Ut} & \longrightarrow \C^3 \\ [\rho] & \longmapsto \left( \tr (\rho (\a \b)) , \tr (\rho (\b \g)), \tr (\rho (\g \a)) \right)
	\end{align*}
	A classical result (see Goldman \cite{gol_tra}) on the character variety of the free group in three generators states that this map is injective and its image is given by :
	$$\X_{\Ut} = \{ (x,y,z) \in \C^3 \ | \ x^2 + y^2 + z^2 + xyz = px + qy + rz + s \} $$
	
	with $(p,q,r,s)= GT (a,b,c,d)$ where $GT : \C^4 \rightarrow \C^4$ is the map defined by :
	 
		$$\begin{pmatrix} a \\ b \\ c \\ d \end{pmatrix}  \longmapsto \begin{pmatrix} ab+cd \\ ad+bc \\ ac+bd \\ 4 - a^2 - b^2 - c^2 - d^2 - abcd \end{pmatrix} = \begin{pmatrix} p \\ q \\ r \\ s \end{pmatrix}$$
	 This map was studied by Goldman-Toledo \cite{gol_aff} who proved that it is onto and proper (see also \cite{cantat_dyn}). 
	 
	 This allows us to identify the relative character variety with the space of Markoff maps as follows
	 
	 \begin{Proposition}
	  Let $\Ut \in \C^4$.  A representation $\rho$ is in $\X_{\Ut}$ if and only if the triple $-\chi (\rho)$ is a $\Um$-Markoff triple with $\Um = GT (\Ut)$. 
	  \end{Proposition}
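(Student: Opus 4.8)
The plan is to deduce the Proposition directly from the classical Fricke description of the relative character variety recalled just above, the only new ingredient being the global sign change $(x_1,x_2,x_3)\mapsto(-x_1,-x_2,-x_3)$ already flagged in the Remark in Section~\ref{s:mar}: this change is precisely what turns the ``$+xyz$'' character-variety cubic into the ``$-x_1x_2x_3$'' vertex equation \eqref{eqn:vertex}. Concretely, I would first use the result of \cite{gol_tra} to replace the statement ``$[\rho]\in\mathfrak{X}_{\Ut}$'' by the single scalar identity $x^2+y^2+z^2+xyz=px+qy+rz+s$ for $(x,y,z)=\chi(\rho)$, where $(p,q,r,s)=GT(\Ut)$, and then substitute.

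For the forward implication, let $[\rho]\in\mathfrak{X}_{\Ut}$ and $(x,y,z)=\chi(\rho)$, so that $(x,y,z)$ lies on the cubic above with $(p,q,r,s)=GT(a,b,c,d)$. Setting $(x,y,z)=-(x_1,x_2,x_3)$ one has $xyz=-x_1x_2x_3$ while each linear term changes sign, so the cubic relation becomes
\[
x_1^2+x_2^2+x_3^2-x_1x_2x_3+px_1+qx_2+rx_3=s .
\]
Since $\Um=GT(\Ut)=(p,q,r,s)$ means precisely $\lambda_1=p$, $\lambda_2=q$, $\lambda_3=r$ and that the last coordinate of $\Um$ equals $s$, this is the vertex equation \eqref{eqn:vertex} for the triple $(x_1,x_2,x_3)=-\chi(\rho)$; hence $-\chi(\rho)$ is a $\Um$--Markoff triple.

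For the converse, if $-\chi(\rho)$ satisfies \eqref{eqn:vertex} with $\Um=GT(\Ut)$, running the substitution backwards shows that $\chi(\rho)$ satisfies the Fricke cubic attached to $(p,q,r,s)=GT(\Ut)$; by the surjectivity part of the cited result this cubic equals $\chi(\mathfrak{X}_{\Ut})$, and by the injectivity part $\chi$ is a bijection onto it, so $[\rho]\in\mathfrak{X}_{\Ut}$. Taken together, the two implications say that $[\rho]\mapsto-\chi([\rho])$ is a bijection from $\mathfrak{X}_{\Ut}$ onto the set of $\Um$--Markoff triples with $\Um=GT(\Ut)$; this is the clean way to read the statement, in view of the fact that $GT$ is not injective (for instance $\Ut$ and $-\Ut$ have the same image), so that a given $\Um$ can arise from several relative character varieties.

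I do not expect a genuine obstacle here: all the geometric weight is carried by the classical Fricke presentation of the character variety of $\Sigma_{0,4}$ (the trace coordinates, the single defining equation, and the $GT$ substitution), imported from \cite{gol_tra}, and what remains is the elementary sign change. The one point requiring care is the bookkeeping: one must make sure the labelling $\alpha\beta,\ \beta\gamma,\ \gamma\alpha\leftrightarrow$ colours $1,2,3$ used to identify $\widehat{\Omega}$ with $\Omega$ is the one fixed in Section~\ref{s:mar}, so that $\lambda_i$ multiplies the coordinate coming from $\Omega_i$, and that the four entries of $\Um$ are matched with the four outputs of $GT$ in the correct order. If one wanted to avoid citing \cite{gol_tra}, the forward direction can be checked by hand from the $\SL(2,\C)$ identities $\tr(AB)+\tr(AB^{-1})=\tr A\,\tr B$ together with the relation $\alpha\beta\gamma\delta=1$ — the same computation that underlies the elementary moves \eqref{eqn:elemoper} — but the cited statement makes this unnecessary.
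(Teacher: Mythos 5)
Your argument is correct and matches the paper's treatment exactly: the Proposition is stated there without an explicit proof, being an immediate consequence of the Goldman/Fricke description of $\chi(\mathfrak{X}_{\Ut})$ as the cubic $x^2+y^2+z^2+xyz=px+qy+rz+s$ with $(p,q,r,s)=GT(\Ut)$, combined with the sign change $(x,y,z)\mapsto(-x_1,-x_2,-x_3)$ already flagged in the Remark of Section 2 --- which is precisely the substitution you carry out. Your caveat about the non-injectivity of $GT$ (so that the ``only if'' direction is best read as saying $-\chi$ is a bijection from $\mathfrak{X}_{\Ut}$ onto the $\Um$--Markoff triples) is a correct and worthwhile clarification of how the statement should be understood.
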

	  
	  Note that there is a sign convention that is slightly different from previous work in \cite{mal_ont}, and this is due to the fact that we give a general definition that works for both the one-holed torus and the four-holed sphere. 
	 
	 \subsection{Closed surface of characteristic $-1$}\
	 
	 Let $N_3$ be a topological closed surface of characteristic $-1$. It is the non-orientable surface of genus $3$, namely the connected sum of three projective plane.
	 
	 \subsubsection{Curves on $N_3$}
	 
	 Recall that a closed curve on a non-orientable surface is said to be \emph{two-sided} if it admits a regular neighborhood which is orientable, else it is said to be \emph{one-sided}. We also say that a simple closed curve is \emph{orientable} (resp. \emph{non-orientable}) if the surface cut along that curve is orientable (resp. non-orientable). Note that on a non-orientable surface, a separating curve is necessarily $2$-sided and non-orientable. And if the genus of the non-orientable surface is odd, there are no orientable $2$-sided curves. 
	 
	 So, on the surface $N_3$, there are exactly four types of simple closed curves :
	\begin{enumerate}
		\item Separating curves. These are necessarily $2$-sided and non-essential, as they bound a Möbius band. 
		\item Non-separating $2$-sided curves. The surface obtained by cutting along such a curve is a 2-holed projective plane.
		\item Orientable $1$-sided curve. There is a unique such curve
		\item Non-orientable $1$-sided curves. A curve such that $S \setminus \gamma$ is non-orientable. 
	\end{enumerate}
	
	 The curve complex of $N_3$ has the following structure :
	 \begin{itemize}
	 	\item The unique orientable 1-sided curve is disjoint from all non-separating $2$-sided curves, but  intersect all non-orientable 1-sided curve.
	 	\item Two $2$-sided non-separating curves intersect at least once. And for each $2$-sided non-separating curve, there is a unique non-orientable curve that is disjoint from it. 
	 	\item Finally, the subcomplex formed by non-orientable $1$-sided curves is equivalent to the curve complex of the one-holed torus.
	\end{itemize} 
	
	 \subsubsection{Character variety of $N_3$}

	 The fundamental group of $N_3$ is given by the following presentation :
$$\Gamma_3 = \pi_1 (N_3) = \langle \a , \b , \g \mid \a^2 \b^2 \g^2 \rangle$$
where $\a, \b , \g$ are homotopy classes of disjoint simple non-orientable $1$-sided curves. The unique orientable $1$-sided curve is given by $\delta = \a \b \g$. 

The character variety of $\Gamma_3$ in $\mathrm{SL} (2, \C)$ is given by the following Theorem, which already appeared in the author's thesis (\cite{pal_these}) :

\begin{Theorem} The map
	\begin{align*}
		\X (\Gamma_3) & \longrightarrow \C^4 \\
		[ \rho ] & \longmapsto (\tr (\rho (\a)), \tr (\rho ( \b)), \tr (\rho (\g)) , \tr (\rho ( \d)) )
	\end{align*}
	is injective. Its image is the set
	$$\mathfrak{N} = \left\{ (a,b,c,d) \in \C^4 \, \big| \, a^2 + b^2 + c^2 - abc \frac d2 = 4 \right\}$$
\end{Theorem}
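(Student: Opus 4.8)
The plan is to pin everything down by reducing to the well-understood $\SLtwoC$--character variety $\X(F_3)$ of the free group on three generators, with $F_3=\langle\a,\b,\g\rangle$ mapping onto $\Gamma_3$. The engine of the proof is a single identity that I would establish first: for any $\rho\co\Gamma_3\to\SLtwoC$, setting $a=\tr(\rho(\a))$, $b=\tr(\rho(\b))$, $c=\tr(\rho(\g))$, $d=\tr(\rho(\d))$, the remaining standard trace coordinates are already determined, namely $\tr(\rho(\a\b))=\tfrac12 cd$, $\tr(\rho(\b\g))=\tfrac12 ad$, $\tr(\rho(\g\a))=\tfrac12 bd$.

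To prove $\tr(\rho(\a\b))=\tfrac12 cd$, I would rewrite the relator using the conjugacy hidden in it. With $w=(\a\b)^{-1}$ one computes $\d w\d=\a\b\g^2$ and $\a w\a^{-1}=\a\b^{-1}\a^{-2}$, so the equation $\d w\d=\a w\a^{-1}$ is \emph{equivalent} to $\a^2\b^2\g^2=1$. Therefore $\rho(\d)W\rho(\d)=\rho(\a)W\rho(\a)^{-1}$ with $W=\rho(w)$; taking traces gives $\tr(\rho(\d)^2 W)=\tr(W)$. Using Cayley--Hamilton $\rho(\d)^2=d\,\rho(\d)-I$, the fact that $\rho(\d)W$ is conjugate to $\rho(\g)$ (it is $(\a\b)\g(\a\b)^{-1}$ in $\Gamma_3$), so that $\tr(\rho(\d)W)=c$, and $\tr(W)=\tr(\rho(\a\b))$, one obtains $cd-\tr(\rho(\a\b))=\tr(\rho(\a\b))$, whence $\tr(\rho(\a\b))=\tfrac12 cd$. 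The other two identities follow by applying this one to $\rho\circ\sigma$ and $\rho\circ\sigma^2$, where $\sigma$ is the order-three automorphism of $\Gamma_3$ cyclically permuting $\a,\b,\g$ (well defined because $\b^2\g^2\a^2=1$ follows from the relator), noting that $\sigma(\d)$ and $\sigma^2(\d)$ are conjugate to $\d$.

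Containment $\X(\Gamma_3)\to\mathfrak{N}$ is then immediate: from $\a^2\b^2\g^2=1$ we have $\rho(\g)^2=(\rho(\a)^2\rho(\b)^2)^{-1}$, so $c^2-2=\tr(\rho(\a)^2\rho(\b)^2)$, and expanding the right-hand side via Cayley--Hamilton gives $c^2-2=ab\,\tr(\rho(\a\b))-a^2-b^2+2$; substituting $\tr(\rho(\a\b))=\tfrac12 cd$ yields precisely $a^2+b^2+c^2-\tfrac12 abcd=4$. For injectivity, note that viewing $\rho$ as a representation of $F_3$ through $F_3\twoheadrightarrow\Gamma_3$, its seven standard Fricke coordinates are $(a,b,c,\tfrac{cd}{2},\tfrac{ad}{2},\tfrac{bd}{2},d)$, a function of $(a,b,c,d)$ alone. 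Since these seven traces determine a point of $\X(F_3)$ (\cite{gol_tra}) and the natural map $\X(\Gamma_3)\to\X(F_3)$ is a closed embedding — because $\operatorname{Hom}(\Gamma_3,\SLtwoC)$ is a closed $\SLtwoC$-invariant subvariety of $\operatorname{Hom}(F_3,\SLtwoC)$ and $\SLtwoC$ is reductive — the character $[\rho]$ is determined by $(a,b,c,d)$. In particular $\X(\Gamma_3)$ is realized as a \emph{closed} subvariety $Z\subseteq\C^4$, contained in $\mathfrak{N}$.

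Finally, to see $Z=\mathfrak{N}$: the defining polynomial $a^2+b^2+c^2-\tfrac12 abcd-4$ is irreducible (it is linear and primitive in $d$), so $\mathfrak{N}$ is irreducible and it is enough to show that $Z$ contains all of the dense open locus $\mathfrak{N}\cap\{abc\neq0\}$. Given $(a,b,c,d)\in\mathfrak{N}$ with $abc\neq0$, choose $A,B\in\SLtwoC$ with $(\tr A,\tr B,\tr AB)=(a,b,\tfrac{cd}{2})$ (possible since $\X(F_2)\cong\C^3$) and set $C:=\tfrac1c\big((A^2B^2)^{-1}+I\big)$. Using the relation $\tfrac12 abcd=a^2+b^2+c^2-4$ one checks $\det C=1$ and $\tr C=c$, so by Cayley--Hamilton $C^2=cC-I=(A^2B^2)^{-1}$, hence $A^2B^2C^2=I$ and $(A,B,C)$ defines a representation $\rho$ of $\Gamma_3$; the containment already proved, applied to $\rho$, then forces $\tr(\rho(\d))=d$, so $(a,b,c,d)\in Z$. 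The main obstacle is the key identity $\tr(\rho(\a\b))=\tfrac12 cd$: the point is to recognize that the surface relator $\a^2\b^2\g^2$ is equivalent to the conjugacy relation $\d w\d=\a w\a^{-1}$ with $w=(\a\b)^{-1}$; once this is in hand, everything else is bookkeeping with classical facts about $\X(F_2)$ and $\X(F_3)$. A secondary subtlety — the behaviour of reducible representations and of the degenerate locus $\{abc=0\}$ — is dealt with by invoking reductivity and by phrasing surjectivity through the closedness of $Z$, rather than by arguing pointwise there.
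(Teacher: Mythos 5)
Your argument is correct, and the first half runs along the same track as the paper's: both proofs embed $\X(\Gamma_3)$ into the Fricke coordinates of $\X(F_3)$ and reduce everything to the single identity $\tr(\rho(\a\b))=\tfrac12\,cd$ (plus its cyclic companions), which the paper extracts from the rewriting $(\a\b)^{-1}=(\b\g)(\g\a)$ and the product trace identity, and which you extract from the conjugacy reformulation $\d w\d=\a w\a^{-1}$ of the relator together with Cayley--Hamilton; these are two routes to the same relation $x=cd-x$, and your use of the cyclic automorphism $\sigma$ to get $y=\tfrac{ad}2$, $z=\tfrac{bd}2$ is a clean way to avoid repeating the computation. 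Where you genuinely diverge is surjectivity: the paper simply defers to an external description of $\X(\Gamma_3)$ as an explicit closed algebraic set (Theorem 3.2 of the cited reference and the author's thesis), whereas you give a self-contained construction --- pick $(A,B)$ realizing $(a,b,\tfrac{cd}2)$ via $\X(F_2)\cong\C^3$, define $C=\tfrac1c\bigl((A^2B^2)^{-1}+I\bigr)$, verify $\det C=1$, $\tr C=c$, hence $C^2=(A^2B^2)^{-1}$, and recover $\tr(ABC)=d$ from the already-proved identity --- then sweep up the degenerate locus $\{abc=0\}$ by irreducibility of $\mathfrak{N}$ and closedness of the image (which does require the small argument that the image in $\C^7$ lies on the graph $x=\tfrac{cd}2$, $y=\tfrac{ad}2$, $z=\tfrac{bd}2$, so that the projection to $(a,b,c,d)$ is closed). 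I checked the computations ($\det(M+I)=1+\tr M+\det M=c^2$, $\tr(A^2B^2)=ab\tr(AB)-a^2-b^2+2=c^2-2$ on $\mathfrak{N}$) and they are right. Your version buys independence from the external reference at the cost of a little algebraic-geometry bookkeeping (reductivity for the closed embedding $\X(\Gamma_3)\hookrightarrow\X(F_3)$, density of $\{abc\neq0\}$); the paper's version is shorter but not self-contained.
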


\begin{proof} 
We know that $\X (\Gamma_3)$ is an algebraic subset of the character variety of the free group in three generators. Which means that we have an injective map $\X (\Gamma_3) \rightarrow \C^7$ whose image is the set 
$$
	\left\{  (a,b,c,d,x,y,z) \in \C^7 ,  \left| \begin{array}{l}a^2+b^2+c^2+d^2+x^2+y^2+z^2-abcd-4 \\ -(ab+cd)x-(ad+bc)y-(ac+bd)z +xyz =0  \end{array} \right.  \right\}$$

The relation in $\Gamma_3$ implies that $(\a \b)^{-1} = \b \g \g \a$ and hence any $(a,b,c,d,x,y,z) \in \X (N_3)$ satisfies $x = cd - x$. Similarly, we have $y = ad-y$ and $z = bd - z$. Equivalently, we can write $x = \frac{cd}{2} , y = \frac{ad}{2} , z = \frac{bd}{2}$, so we substitute the expression of $x,y,z$ in terms of $a,b,c,d$ in the equation defining $\X (F_3)$ we get :
\begin{align*}
& a^2+b^2+c^2+d^2+ \left(\frac{cd}{2}\right)^2 + \left(\frac{ad}{2}\right)^2 + \left(\frac{bd}{2}\right)^2 -abcd - 4 \\
& - (ab+cd)\left(\frac{cd}{2}\right) - (ad+bc)\left(\frac{ad}{2}\right) - (ac+bd)\left(\frac{bd}{2}\right)  + \left(\frac{cd}{2}\right)\left(\frac{ad}{2}\right)\left(\frac{bd}{2}\right)  = 0\\
\end{align*}
which is equivalent to $ a^2 + b^2 + c^2 - abc \frac d2 = 4$

This proves the injectivity of the map described in the theorem, and that its image is included in $\mathfrak{N}$. To get surjectivity, we can use Theorem 3.2 in \cite{gonzalez_ont} which describe $X(G)$ as an explicit closed algebraic set (see the author thesis for more details).
\end{proof}

 This allows us to parametrize directly $\X (\Gamma_3)$ as a set of Markoff maps, except in the $d = 0$ case.

\begin{Proposition}

An element $(a,b,c,d) \in \C^4$ with $d \neq 0$, is the character of a representation $\rho$ in $\X (\Gamma_3)$ if and only if $\left( \frac{cd}{2}, \frac{ad}{2},\frac{bd}{2} \right)$ is a $(0,0,0,d^2)$-Markoff map. 

An element $(a,b,c,0)$ is the character of a representation $\rho$ in $\X (\Gamma_3)$ if and only if $a^2+b^2+c^2=4$.
\end{Proposition}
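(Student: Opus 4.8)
The plan is to deduce this directly from the previous Theorem together with the bijective correspondence between $\Um$--Markoff triples and $\Um$--Markoff maps recalled in Section~\ref{s:mar}; the entire content is a single substitution.

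First recall that, by the Theorem, $(a,b,c,d)\in\C^4$ is the character of a representation in $\X(\Gamma_3)$ if and only if it lies in $\mathfrak{N}$, i.e. $a^2+b^2+c^2-abc\frac d2=4$. Set $(x,y,z)=\left(\frac{cd}{2},\frac{ad}{2},\frac{bd}{2}\right)$ and substitute it into the $(0,0,0,d^2)$--vertex equation $x^2+y^2+z^2-xyz=d^2$. One obtains
$$\frac{d^2}{4}\bigl(a^2+b^2+c^2\bigr)-\frac{abcd^3}{8}=d^2 .$$
When $d\neq 0$ we may divide by $d^2$ and multiply by $4$, obtaining $a^2+b^2+c^2-\frac{abcd}{2}=4$, which is exactly the equation cutting out $\mathfrak{N}$. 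Hence $(a,b,c,d)\in\mathfrak{N}$ if and only if $(x,y,z)$ satisfies the vertex equation with $\Um=(0,0,0,d^2)$. Since $\lambda_1=\lambda_2=\lambda_3=0$ the vertex equation is symmetric in its three variables, so the order in which the triple is assigned to the regions $X_1,X_2,X_3$ around a vertex is irrelevant, and by the correspondence of Section~\ref{s:mar} (a triple satisfying the vertex relation at one vertex extends uniquely over $\Omega$ via the edge relations) such a triple defines a unique $(0,0,0,d^2)$--Markoff map. This yields the first equivalence.

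For the remaining case $d=0$, the defining equation of $\mathfrak{N}$ degenerates to $a^2+b^2+c^2=4$, so by the Theorem $(a,b,c,0)$ is the character of a representation in $\X(\Gamma_3)$ precisely when $a^2+b^2+c^2=4$. This is also why this locus must be excluded from the Markoff-map description: there the triple $(x,y,z)=\left(\frac{cd}{2},\frac{ad}{2},\frac{bd}{2}\right)$ vanishes identically and one cannot recover $(a,b,c)$ from it.

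There is essentially no obstacle here: the only points to watch are the bookkeeping of the factors of $d$ and of $2$ in the substitution, and the observation that the hypothesis $d\neq 0$ is exactly what permits passing between the $d^2$--Markoff equation and the equation defining $\mathfrak{N}$.
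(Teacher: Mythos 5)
Your proof is correct and follows essentially the same route as the paper: the paper's own proof consists precisely of the change of variables $x=\frac{cd}{2}$, $y=\frac{ad}{2}$, $z=\frac{bd}{2}$ turning the equation defining $\mathfrak{N}$ into the $(0,0,0,d^2)$-vertex equation when $d\neq 0$. Your write-up is slightly more complete in that it spells out the $d=0$ degeneration and the passage from a vertex triple to a full Markoff map, both of which the paper leaves implicit.
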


\begin{proof}

When $d\neq 0$, the equation defining $\mathfrak{N}$ is equivalent to :
$$x^2 + y^2 + z^2 - xyz = d^2$$
with the change of variable $x = \frac{cd}{2} , y = \frac{ad}{2} , z = \frac{bd}{2}$.
 \end{proof}

	 \section{Trace Systoles} \label{s:sys}
	 
	 The goal of this section is to relate the sink constant of Section \ref{s:sink} with the trace systoles of representations of surface groups, and systoles of certain hyperbolic manifolds. This will allow us to prove Theorems \ref{introthm:sys1} and \ref{introthm:sys2}.

	 \subsection{From hyperbolic surfaces to representations}\

	 Let $S$ be an hyperbolic surface of finite type that can be closed or with geodesic boundaries, cusps and conical singularities, so that $\chi (S) <0 $. 
	 
	 Recall that if $S$ is an orientable surface of genus $g$, then $\chi (S) = 2 - 2g -b -s + \sum (\frac{\alpha_i}{2\pi} - 1 )$ where $b$ the number of geodesic boundaries, $s$ the number of cusps and $\alpha_i$ the angles of the conical singularities. When $S$ is a non-orientable surface of genus $k \geq 1$, we have $\chi(S) =2 - k -b -s+ \sum (\frac{\alpha_i}{2\pi} - 1 )$.  We denote by $\mathfrak{B}$ the so-called \emph{boundary data}, namely the number of cusps, the lengths of boundary components and the angles of conical singularities, if any.

	 The \emph{systole} of $S$ is the minimal length of an essential simple closed curve on $S$ and is denoted $\mathrm{sys} (S)$. This defines a function $\mathrm{sys} : \mathscr{T} (\Sigma, \mathfrak{B}) \rightarrow \R_+$ where $\mathscr{T}(\Sigma, \mathfrak{B})$ is the Teichmüller space of equivalence classes of hyperbolic structures on the topological surface $\Sigma$ with prescribed boundary data $\mathfrak{B}$.

The holonomy representation of such a structure gives rise to an homomorphism from $\pi_1 (\Sigma)$ into $\mathrm{PGL} (2 , \R)$, where $\Sigma$ is the topological surface corresponding to $S$. For consistency, we consider that conical singularities do not belong to $S$, so that a closed curve around such a singularity is a non-trivial element of the fundamental group (but is not an essential curve). There is a relation between the length of a closed geodesic on $S$ and the trace of its image by the holonomy representation. 

Note that in the orientable case, all curves are $2$-sided and the representation $\rho_S$ takes values in $\mathrm{PSL} (2 , \R)$, while in the non-orientable case, we have to distinguish whether the curve is $2$-sided or $1$-sided, and we know that the holonomy representation of an hyperbolic structure sends every $1$-sided curve to an element of $\mathrm{PGL}^- (2 , \R)$, the subset of orientation-reversing isometries of the hyperbolic plane. So we have

 	\begin{equation}\label{eqn:sys-tys}
 	l_S (\gamma) = \left\{ \begin{array}{ll} 2 \mathrm{arccosh} \left( \frac{| \tr (\rho_S (\gamma)) |}{2} \right) & \mbox{ if } \gamma \mbox{ is 2-sided} \\
				2 \mathrm{arcsinh}  \left( \frac{| \tr (\rho_S (\gamma)) |}{2} \right) & \mbox{ if } \gamma \mbox{ is 1-sided}\end{array} \right.
	 	\end{equation}

	 From these relation, we see that we can generalize the notion of systole to the entire space of representations $\mathrm{Hom} (\pi_1 (\Sigma), \rm{SL} (2 , \C))$. 
	 
	 \begin{Definition}Given a representation $\rho : \pi_1 (\Sigma) \rightarrow \rm{SL} (2  , \C)$, we define the trace systole of $\rho$ as :
	
	 	$$\rm{tys} (\rho) = \inf \left\{ |\tr (\rho (\gamma)) | \ , \ \gamma \in \pi_1 (\Sigma), \mbox{essential simple closed curve} \right\}$$
	 	\end{Definition}
	 
	  The definition of $\rm{tys}$ can be used for any representation into $G = \rm{PSL} (2 , \R)$ or  $\rm{PSL} (2 , \C)$, because the modulus $|\tr (\rho (\gamma))|$ is still well-defined in each of these groups. So we will use the same notation for such representations. We also note that as the trace is conjugation invariant, the function $\rm{tys}$ can be defined on the character variety $\mathfrak{X} (\pi_1 (\Sigma) , G ) = \rm{Hom} (\pi_1 (\Sigma) , G) / G$ which is the space of orbit closures for the action of $G$ by conjugation on representations.
	 
	 It is important that we restrict ourselves to simple curves in the definition of the trace systole to have an interesting function for representations that are not discrete. Indeed, if $\rho : \pi_1 (\Sigma) \rightarrow \rm{PSL} (2 , \R)$  is a representation that is not discrete,  then its image is dense in $\rm{PSL} (2 , \R)$ and hence $\inf \left\{ |\tr (\rho (\gamma)) | , \gamma \in \pi_1 (\Sigma) \right\} = 0$.  But the trace systole of such a representation is not necessarily $0$,  for example if $\rho$ is the holonomy of an hyperbolic structure on a torus with a conical singularity of irrational angle, then the representation is dense but the hyperbolic systole is well-defined and non-zero.

	\subsection{Maximum of the trace systole}\

	When $\Sigma$ is a closed surface, the function $\rm{tys}$ is bounded on the character variety and attains its maximum, that we denote by $\rm{Tys} (\Sigma)$ this maximum. When $\Sigma$ has boundary components, the fundamental group $\pi_1 (\Sigma)$ is a free group and the function $\rm{tys}$ is unbounded on the whole character variety. However, one can consider its restriction on relative character variety with prescribed traces on the boundary.
	
	If $\Sigma$ has $p$ boundary components, with $c_1 , \dots , c_p \in \pi_1 (\Sigma)$ representing loops around each boundary, and we let $\mathfrak{B} = (b_1 , \dots , b_p) \in \C^b$, we can consider 
	$$\mathfrak{X}_{\mathfrak{B}} (\Sigma) = \left\{ [\rho] \in \mathfrak{X} (\Sigma) , \forall i \in \{ 1 , \dots , p \} , \tr (\rho (c_i) = b_i \right\} $$
	
	So we can define $\mathrm{Tys}_{\mathfrak{B}} (\Sigma)$ as the maximum of $\rm{tys}$ on $\mathfrak{X}_{\mathfrak{B}} (\Sigma)$.

	 	\subsection{One holed torus}\

	A reformulation of Theorem \ref{thm:region} in terms of trace systole directly gives Theorem \ref{introthm:sys1}.(1)
	
	\begin{Theorem}\label{thm:tysT} Let $T$ be a one-holed torus, and $k \in \C \setminus \{ 2 \}$. We have 
	$$ \mathrm{Tys}_{k} (T) = |t_{k+2}|$$
	
	\end{Theorem}

	Recall that $t_a \in \C$ is the dominant root of $X^3 - 3X^2+a = 0$.

	\subsubsection{Hyperbolic one-holed torus}

	In the case of representations coming from holonomy representations of hyperbolic structures, we can use the previous Theorem to infer optimal systolic inequalities given in Theorem \ref{introthm:sys2}.(1),(2) and (3).

\begin{Theorem}\label{thm:sysT11} Let $T$ be a surface with an hyperbolic metric and $\rm sys$ the length of its systole.
	
	\begin{enumerate}
		\item If $T$ is an hyperbolic one-holed torus, with geodesic boundary of lenth $l$, then 
			$$\cosh \left( \frac{{\rm sys}}{2}\right)  \leq \cosh \left( \frac l6 \right) + \frac 12$$
		\item If $T$ is a once-punctured torus, then
			$$\cosh \left( \frac{{\rm sys}}{2}\right)  \leq  \frac 32$$
		\item If $T$ is a singular hyperbolic structure on the torus with a conical singularity of angle $\theta \in [0 , 2\pi [$, then
			$$\cosh \left( \frac{{\rm sys}}{2}\right) \leq \cos \left( \frac \theta6 \right) + \frac 12$$
	\end{enumerate}
\end{Theorem}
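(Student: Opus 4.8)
## Proof Strategy for Theorem \ref{thm:sysT11}

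The plan is to translate each geometric statement into a statement about the trace systole of the holonomy representation and then invoke the previously established bounds, in particular Theorem \ref{thm:region} (equivalently, the one-holed torus result $\mathrm{Tys}_k(T) = |t_{k+2}|$). The key point is that for a hyperbolic structure on the torus — with geodesic boundary, a cusp, or a conical singularity — the holonomy representation $\rho_X \colon \pi_1(T) \to \mathrm{PSL}(2,\R)$ is a $k$-representation where $k = \tr(\rho_X([\a,\b]))$ is determined by the boundary data: if the boundary is a geodesic of length $l$ then $k = -2\cosh(l/2)$; if it is a cusp then $k = -2$ (wait — we must be careful here, since $k=2$ is excluded, so one uses $k = -2$, the parabolic value); and if it is a conical point of angle $\theta$ then $k = -2\cos(\theta/2)$. (These are the standard normalizations for the commutator trace coming from the peripheral element; the sign is fixed by requiring the representation to lift appropriately and is the one consistent with the Markoff picture where $\mu = k+2$.)

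First I would record these three values of $k$ and compute $\mu = k+2$ in each case: $\mu = 2 - 2\cosh(l/2)$ for geodesic boundary, $\mu = 0$ for the cusp, and $\mu = 2 - 2\cos(\theta/2)$ for the conical singularity. In all three cases $\mu \le 0 < 4$ (with $\mu = 0$ exactly when the boundary degenerates to a cusp or $\theta$ degenerates appropriately), so Theorem \ref{thm:region} applies and gives a region $X$ with $|\phi(X)| \le |t_\mu|$; since $\mu$ is real and $\le 4$ we in fact have $t_\mu = t'_\mu \ge 2$ real (Proposition \ref{prop:realroots}, cases (1) and (2)), so $|\phi(X)| = t_\mu$. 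By the dictionary of Section \ref{s:char}, $\phi(X) = \tr(\rho_X(g))$ for a simple closed curve $g$, so $\mathrm{tys}(\rho_X) \le t_\mu$, and by the length-trace formula \eqref{eqn:sys-tys} this reads $2\cosh(\mathrm{sys}(X)/2) \le |\tr(\rho_X(g))| \le t_\mu$, i.e. $\cosh(\mathrm{sys}(X)/2) \le t_\mu/2$.

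The remaining step is purely algebraic: I must show that $t_\mu/2$ equals the claimed right-hand side in each case. For the cusp, $\mu = 0$ gives $t_0 = 3$ (the dominant root of $X^3 - 3X^2 = 0$), so $\cosh(\mathrm{sys}/2) \le 3/2$, which is (2). For the geodesic boundary, I would substitute $\mu = 2 - 2\cosh(l/2)$ into $X^3 - 3X^2 + \mu = 0$ and verify directly that $X = 2\cosh(l/6) + 1$ is a root: writing $u = \cosh(l/6)$, one has $\cosh(l/2) = 4u^3 - 3u$, so $\mu = 2 - 2(4u^3-3u) = 2 - 8u^3 + 6u$, and one checks $(2u+1)^3 - 3(2u+1)^2 + 2 - 8u^3 + 6u = 0$ by expansion; then confirm this root is the dominant one (it is $\ge 3$ since $u \ge 1$, while the other two roots lie in $[-2,2]$ by Proposition \ref{prop:realroots}(1) when $l>0$). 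This yields $\cosh(\mathrm{sys}/2) \le \cosh(l/6) + 1/2$, which is (1). The conical case is identical with $\cos$ in place of $\cosh$: set $v = \cos(\theta/6)$, use $\cos(\theta/2) = 4v^3 - 3v$, and check $X = 2v+1$ is the dominant root of $X^3 - 3X^2 + (2 - 8v^3 + 6v) = 0$, giving (3).

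The main obstacle — and the place where care is genuinely required — is the identification of the correct value and \emph{sign} of the boundary commutator trace $k$ for each geometric situation, and the verification that the relevant root of the cubic is the \emph{dominant} one so that Theorem \ref{thm:region} and the definition of $t_\mu$ line up. The triple-angle substitutions are the natural guess (they are forced by the fact that the cubic $X^3 - 3X^2 + \mu$ becomes, after $X \mapsto 2Y+1$, essentially $8Y^3 - 6Y + (\mu - 2) = 0$, i.e. $\cosh(3w) = (2-\mu)/2$ under $Y = \cosh w$), so once the boundary normalization is pinned down the rest is a short computation. I would also remark that optimality — i.e. that these inequalities are sharp — follows from the optimality part of Theorem \ref{thm:sink1}: the diagonal Markoff triple $(t_\mu, t_\mu, t_\mu)$ is realized by an actual hyperbolic structure (the maximally symmetric one), so equality is attained.
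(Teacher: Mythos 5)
Your proposal is correct and follows essentially the same route as the paper: read off $\mu=k+2$ from the commutator trace $\tr(\rho([\a,\b]))=-2\cosh(l/2)$, $-2$, or $-2\cos(\theta/2)$, bound the trace systole by $|t_\mu|$ via Theorem~\ref{thm:region}, and use the triple-angle identity to identify $t_\mu=2\cosh(l/6)+1$ (resp.\ $3$, resp.\ $2\cos(\theta/6)+1$) --- your root verification is in fact cleaner than the paper's displayed manipulation, which contains a sign slip. Two minor misstatements in your write-up do not affect the argument: in the conical case $\mu=2-2\cos(\theta/2)$ lies in $(0,4)$ rather than being $\le 0$, so dominance of the root $2\cos(\theta/6)+1\in(2,3)$ comes from case (3) of Proposition~\ref{prop:realroots}, and for $\mu<0$ the two non-dominant roots are complex conjugates (of modulus $\sqrt{t_\mu^2-3t_\mu}<t_\mu$) rather than real numbers in $[-2,2]$.
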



\begin{proof}
	Let $\rho : \pi_1 (T) \rightarrow \rm{PSL} (2, \R)$ be the holonomy representation of the hyperbolic structure on $T$.  Using Theorem \ref{thm:tysT} we have that $\rm{tys}(\rho) \leq |t_\mu|$ with $3 t_\mu^2 -  t_\mu^3 =  \mu $.
	
	On the other hand, if we denote $\pi_1 (T) = \langle \a, \b \rangle$, then the element $[\a,\b]$ corresponds to the boundary component of $T$ (or the conical singularity). Let $A = \rho (\a)$ and $B = \rho(\b)$, so we have $\tr ([ A,B] ) = \mu - 2$. Depending on the case we have :
	
	$$\tr ([ A, B])  = \left\{ \begin{array}{ll} - 2 \cosh \left( \frac l2 \right) & \mbox{ if } T \mbox{ is an hyperbolic one-holed torus} \\
						-2 & \mbox{ if } T \mbox{ is once punctured torus} \\
						- 2 \cos \left( \frac \theta2 \right) & \mbox{ if } T \mbox{ is a singular torus} \end{array}
						\right.
						$$

	In the first case, using the trigonometric identity $\cosh (3x) = 4 \cosh^3 (x) - 3 \cosh(x)$, and setting $x = \frac l6$ we see that the equation can be rewritten  
	$$\begin{array}{lrl}
		 &   t_\mu^3 - 3 t_\mu^2  +2 & =  2 \cosh (3x) \\
		\Leftrightarrow & 4 \left(\dfrac{t_\mu}{2} - \dfrac 12 \right)^3 - 3 \left( \dfrac{t_\mu}{2} - \dfrac 12 \right) & = \cosh (3x) \\
		  \Leftrightarrow & \dfrac{t_\mu}{2} - \dfrac 12 & = \cosh (x)
	\end{array}$$
	
	We know that $\rm{tys} (\rho) = 2 \cosh \left( \frac{\rm{sys} (T) }{2} \right)$. Hence we have that $\cosh \left( \frac{\rm{sys} (T) }{2} \right) \leq \cosh (x) + \frac 12$.

	The exact same computations apply in the third case, replacing $\cosh (3x)$ by $\cos (3x)$.  The second case simply corresponds to $\mu = 0$ and is obtained directly. 	

\end{proof}

		\subsubsection{Non-Fuchsian component for the one-holed torus}

We also get original results for representations of the fundamental group of the one-holed torus that do not correspond to hyperbolic structure (singular or not) on a one-holed torus. These corresponds to representations with relative Euler class $0$.

\begin{Theorem}\label{thm:sysT11NF} Let $\rho \in \mathcal{X}_k (T)$, with $k > 2$. 
	\begin{enumerate}
		\item If $k \in ]2 , 18[$, there exists a simple closed curve that is sent to an elliptic element.
		\item If $k \geq 18$, there exists a simple closed curve $\gamma \in \pi_1 (T)$ with $|\tr (\rho (\gamma)| \leq 2 \cosh \left( \frac{l}{6} \right) -1$, with $l = 2 \cosh^{-1} \left( \frac k2 \right)$.
	\end{enumerate}
\end{Theorem}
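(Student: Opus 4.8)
The plan is to translate the statement into the language of Markoff maps and then apply Theorem~\ref{thm:region} together with the sharper information on real roots in Proposition~\ref{prop:realroots}. Recall that under the identification of $\mathcal{X}_k(T)$ with ${\bf \Phi}_{\Um}$ for $\Um = (0,0,0,k+2)$, a representation $\rho$ has $\rm{tys}(\rho) = \inf_{X \in \Omega} |\phi(X)|$, where $\phi$ is the associated Markoff map. Since $k > 2$ we have $\mu = k+2 > 4$, so Theorem~\ref{thm:region} applies and yields a region $X$ with $|\phi(X)| \leq |t_\mu|$, where $t_\mu$ is the dominant root of $X^3 - 3X^2 + \mu = 0$. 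Thus $\rm{tys}(\rho) \leq |t_\mu|$, and it remains to convert this bound into the two claimed statements depending on the size of $k$.

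For part (1), the point is that when $2 < k < 18$, i.e. $4 < \mu < 20$, Proposition~\ref{prop:realroots}(5) tells us the unique real root $t'_\mu$ of $X^3 - 3X^2 + \mu = 0$ satisfies $|t'_\mu| < 2$. However, $\rho$ here is a genuine $\SLtwoC$-representation (not assumed real), so I cannot directly invoke the real refinement (Theorem~\ref{thm:sinkreal}); instead I would argue as follows. Either the flow of the Markoff map terminates at a sink — in which case Theorem~\ref{thm:sink1} gives a region with $|\phi(X)| \leq |t_\mu|$ — or it produces an infinite descending ray, giving a region with $|\phi(X)| < 2$. In the latter case we are done, since $|\tr(\rho(\gamma))| < 2$ forces $\rho(\gamma)$ to be elliptic. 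For the sink case one needs to improve $|t_\mu|$ down below $2$; here I expect one must use that, for representations with $k>2$ real, the relevant Markoff triples at a sink can be taken real (the relative character variety $\mathcal{X}_k(T)$ with $k>2$ real is a real algebraic variety, and the trace-reduction flow preserves reality), so that Theorem~\ref{thm:sinkreal} applies and gives $\min_i |\phi(X_i)| \leq \max(|t'_\mu|, 2) = 2$; combined with the strict sink inequality $|x_3| < |x_1 x_2 - x_3|$ one upgrades this to a strict inequality, yielding an elliptic element. This reality reduction is the main obstacle, and I would spell it out carefully.

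For part (2), when $k \geq 18$, i.e. $\mu \geq 20$, Proposition~\ref{prop:realroots}(5) gives $|t'_\mu| \geq 2$, so the relevant root really is $t'_\mu$ (negative real), and by the real refinement we obtain a region with $|\phi(X)| \leq |t'_\mu|$, hence a simple closed curve $\gamma$ with $|\tr(\rho(\gamma))| \leq |t'_\mu|$. It remains to identify $|t'_\mu|$ with $2\cosh(l/6) - 1$ where $l = 2\cosh^{-1}(k/2)$, i.e. $k = 2\cosh(l/2)$, so $\mu = k+2 = 2 + 2\cosh(l/2)$. Setting $x = l/6$ and using $\cosh(3x) = 4\cosh^3 x - 3\cosh x$ exactly as in the proof of Theorem~\ref{thm:sysT11}, the equation $t^3 - 3t^2 = -\mu = -(2 + 2\cosh(3x))$ transforms, via $t = -(2\cosh x - 1) + \tfrac12$... more precisely via the substitution $\tfrac{t}{2} - \tfrac12 = -\cosh x$, into the identity $4(\tfrac{t}{2} - \tfrac12)^3 - 3(\tfrac{t}{2} - \tfrac12) = -\cosh(3x)$; one checks the sign so that $t = 1 - 2\cosh x = -(2\cosh(l/6) - 1)$ is precisely the real root $t'_\mu$, whence $|t'_\mu| = 2\cosh(l/6) - 1$. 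This last part is a routine trigonometric manipulation mirroring the computation already carried out for Theorem~\ref{thm:sysT11}(1), so I would only indicate it briefly.

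Finally, I would remark that the threshold $k = 18$ ($\mu = 20$) is exactly where $|t'_\mu| = 2$, so the two cases meet continuously, and that part~(1) recovers in the Markoff language the statement that such non-Fuchsian representations of the one-holed torus always send some simple closed curve to a non-hyperbolic element, which is the one-holed-torus input for the Bowditch-type argument in Section~\ref{s:bow}.
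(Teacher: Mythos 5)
Your overall strategy is the paper's own: pass to the $(0,0,0,\mu)$-Markoff map with $\mu=k+2>4$, run the trace-reduction flow so that either an infinite descending ray yields a region with $|\phi(X)|<2$ or the flow terminates at a sink, and in the sink case invoke Theorem \ref{thm:sinkreal} together with Proposition \ref{prop:realroots}. Your part (2) is correct and complete: the substitution $t=1-2\cosh(l/6)$ with $\mu=2+2\cosh(l/2)$ and the identity $\cosh(3x)=4\cosh^3x-3\cosh x$ does exhibit the real root $t'_\mu$, so $|t'_\mu|=2\cosh(l/6)-1$; this is exactly the computation the paper leaves implicit by referring back to Theorem \ref{thm:sysT11}.

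The genuine gap is the ``reality reduction'' in part (1). It is not true that the sink triples of a complex Markoff map with $\mu$ real ``can be taken real'': the variety $\{x^2+y^2+z^2-xyz=\mu\}\subset\C^3$ has many non-real points for real $\mu$, and the flow preserves reality only if the initial triple is already real. For a genuinely complex representation the only available sink bound is $|t_\mu|$ from Theorem \ref{thm:sink1}, and by Lemma \ref{lem:Retmu} one has $\Re(t_\mu)\ge 2$, so $|t_\mu|\ge 2$ for every $\mu$ --- in particular for $\mu\in\,]4,20[$ the dominant root is non-real with $|t_\mu|>2$ and $(t_\mu,t_\mu,t_\mu)$ is itself a sink, so no elliptic element can be extracted this way. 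The resolution is not a reduction but a hypothesis: the theorem sits in the subsection on the Euler-class-$0$ (non-Fuchsian) component, so $\rho$ is a representation into $\mathrm{PSL}(2,\R)$, every trace is real, the Markoff map lies in ${\bf \Phi}_{\Um}^\R$ from the outset, and Theorem \ref{thm:sinkreal} applies directly. You should state this assumption explicitly rather than try to manufacture it. Your side remark that Theorem \ref{thm:sinkreal} only gives $\min_i|x_i|\le\max(|t'_\mu|,2)=2$ for $\mu\in\,]4,20[$, hence a non-hyperbolic rather than an elliptic element without an extra strictness argument, is a fair criticism that the paper itself glosses over; your proposed fix via the strict sink inequality (or by noting that equality in the gradient argument of Theorem \ref{thm:sinkreal} forces $(x,y,z)=(2,2,2)$, impossible for $\mu\neq4$) is the right way to close it.
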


\begin{proof}A representation in $\mathcal{X}_k (T)$ with $k>2$, corresponds to a $\mu$-Markoff map with $\mu >4$. Using the same reasoning as in Theorem \ref{thm:region}, we deduce that this Markoff map either has a sink or an infinite descending ray. If there is an infinite descending ray, then there exists a curve $\gamma$ such that $|\tr (\rho (\gamma)) | < 2$.

Otherwise, we use  Theorem \ref{thm:sinkreal} and Proposition \ref{prop:realroots} to get the following :
	\begin{itemize}
		\item If $k \in ]2 , 18[$, then $\mu = k+2 \in ]4 , 20 [$, and hence there exists a simple closed curve $\gamma$ such that $|\tr (\rho (\gamma))| \leq |t_\mu| < 2$. So this curve $\gamma$ is sent to an elliptic element.
		\item If $k \geq 18$, then there exists a curve $\gamma$ such that $|\tr (\rho (\gamma))| \leq |t_\mu| $. A computation similar to the one in the proof of Theorem \ref{thm:sysT11} gives the desired inequality.
	\end{itemize} 
	\end{proof}

	 \subsection{Four-holed sphere}

	\subsubsection{Markoff maps coming from hyperbolic structures}\
	
	Let $S$ be a four-holed sphere and let $\Ut = (a_1,a_2,a_3,a_4) \in [0, +\infty[^4$.
	
	When $a_i \geq 2$ for all $i \in \{1, 2, 3, 4 \}$, we can endow $S$ with an hyperbolic structure with geodesic boundaries or cusps such that the length of the boundaries are given by $l_i= l (a_i)$ for $i = 1, 2 , 3, 4$ (if $l_i = 0$, then it's a cusp). The equivalence class of the holonomy representation of such a structure is an element of $\mathfrak{X}_{\Ut}$.

	We saw in Section \ref{s:char} that this representation corresponds to a $\Um$-Markoff map with $\Um = GT(\Ut)$. As $a_i \geq 2$, we have naturally that $GT(a_1, a_2, a_3, a_4) \in U$.  Hence, such a Markoff map takes values in $[2 , +\infty[$, as all simple closed curves are sent to hyperbolic elements. So, we have that any Markoff map constructed from the holonomy representation of an hyperbolic structure has real positive image.
	
		A similar reasoning can be made if we replace one or several boundaries of the sphere by conical singularities of angle $0 < \theta_i < \pi$. This corresponds to the case where $a_i \in [0, 2[$ and in that case we have $\theta_i = 2 \cos^{-1} \left( \frac{a_i}{2} \right)$. One can still endow $S$ with an incomplete hyperbolic structure with geodesic boundaries, cusps and conical singularities. The holonomy representation of such a structure is not discrete and faithful, but all the simple closed curves are sent to hyperbolic elements, and hence the corresponding Markoff map remains positive. 
		
		So we can infer Theorem \ref{introthm:sys2}.(4) as a direct consequence of Theorem \ref{thm:sink4pos}. Moreover, we can also consider other type of boundaries (conical singularities and cusps) to obtain the following :

	\begin{Theorem}
	Let $S$ be an hyperbolic structure on a sphere with $b$ geodesic boundaries, $c$ cusps and $d$ conical singularities such that $b+c+d = 4$, and let $\gamma_1 , \dots , \gamma_4$ be the representatives of curves around them. 
	
	For $i \in \{ 1 , \dots , 4 \}$ we set	
	$$a_i = \begin{cases} 2 \cosh (\frac{l_i}{2}) & \mbox{ if } \gamma_i \mbox{ corresponds to a geodesic boundary of length } l_i \\
	2 & \mbox{ if } \gamma_i \mbox{ corresponds to a cusp}
	\\ 2 \cos (\frac{\theta_i}{2}) & \mbox{ if } \gamma_i \mbox{ is a conical singularity of angle } \theta_i \end{cases}$$
	Finally, let $(\lambda_1 , \lambda_2 , \lambda_3 , s) = GT ( a_1 , a_2 , a_3 , a_4)$. Then  
	
	$$\mathrm{sys} (S) \leq 2 \cosh^{-1} (t/2)$$
	where $t$ is the solution of $t^3-3t^2-(\lambda_1+\lambda_2+\lambda_3)t-s = 0$.
\end{Theorem}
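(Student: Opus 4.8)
The plan is to restate the assertion in terms of Markoff maps and deduce it from Theorem \ref{thm:sink4pos}. First I would observe that the holonomy representation $\rho_X$ is a point of the relative character variety $\mathfrak{X}_{\Ut}$ with $\Ut=(a_1,a_2,a_3,a_4)$, and that every $a_i\geq 0$ under our normalisation (for the cone points one uses $\theta_i<\pi$, so that $a_i=2\cos(\theta_i/2)\in\,]0,2[$). By Section \ref{s:char}, the triple $-\chi(\rho_X)$ is then a $\Um$--Markoff triple with $\Um=GT(\Ut)=(\lambda_1,\lambda_2,\lambda_3,s)$; since $\lambda_1=a_1a_2+a_3a_4$, $\lambda_2=a_1a_4+a_2a_3$, $\lambda_3=a_1a_3+a_2a_4$ are nonnegative and $s=4-\sum_i a_i^2-a_1a_2a_3a_4\leq 4$, the parameter $\Um$ lies in $U$. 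Moreover, since every essential simple closed curve on $X$ is sent by $\rho_X$ to a hyperbolic element, the associated Markoff map $\phi$ takes values in $[2,+\infty[$ (as observed in the discussion preceding the theorem), so $\phi\in{\bf \Phi}_{\Um}^{\geq 0}$. Finally, the length spectrum of a hyperbolic (cone) surface of finite type is discrete, so for every $C$ the set $\{X\in\Omega\ :\ \phi(X)\leq C\}$ is finite; that is, $\phi$ is a proper function on $\Omega$.

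Next I would produce a sink for $\phi$ so as to apply Theorem \ref{thm:sink4pos}. Consider $F\co V(\Sigma)\to\R$, $F(v)=\phi(X_1)+\phi(X_2)+\phi(X_3)$ where $X_1,X_2,X_3$ are the three regions at $v$. Each pair of regions is carried by at most two vertices of $\Sigma$ (the two Farey triangles sharing the corresponding Farey edge, if there is one), so properness of $\phi$ forces $F(v)\to+\infty$ as $v$ leaves every finite subset of $V(\Sigma)$; hence $F$ attains its minimum at some vertex $v^*$. I claim $v^*$ is a sink. Crossing any of the three edges at $v^*$ replaces exactly one region, say $X_3$, by a region $X_3'$, and the edge relation \eqref{eqn:edge} gives $\phi(X_3)+\phi(X_3')=\phi(X_1)\phi(X_2)-\lambda_3$; minimality of $F$ forces $\phi(X_3')\geq\phi(X_3)$, and since $\phi\geq 0$ this reads $|\phi(X_3')|\geq|\phi(X_3)|$, so the arrow on that edge points toward $v^*$. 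The same holds for the other two edges, so $v^*$ is a sink, and Theorem \ref{thm:sink4pos} gives $\min_i\phi(X_i^*)\leq T_\Um$, where $T_\Um$ is the largest positive real root of $X^3-3X^2-(\lambda_1+\lambda_2+\lambda_3)X+s=0$. Since $\mathrm{tys}(\rho_X)=\min_{X\in\Omega}\phi(X)$, this yields $\mathrm{tys}(\rho_X)\leq T_\Um$.

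It remains to pass from traces to lengths. As $S$ is orientable every simple closed curve is $2$-sided, so by \eqref{eqn:sys-tys} the length of a closed geodesic $\gamma$ equals $2\cosh^{-1}(|\tr\rho_X(\gamma)|/2)$; taking the infimum over essential (non-peripheral) simple closed curves and using monotonicity of $\cosh^{-1}$ on $[1,+\infty[$ gives $\mathrm{sys}(S)=2\cosh^{-1}(\mathrm{tys}(\rho_X)/2)\leq 2\cosh^{-1}(T_\Um/2)$, where one checks $T_\Um\geq 2$ because $P(2)=s-4-2(\lambda_1+\lambda_2+\lambda_3)<0$, and $\mathrm{tys}(\rho_X)\geq 2$ since all such curves are hyperbolic. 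This is the claimed inequality, $t$ being the constant $T_\Um$ of Theorem \ref{thm:sink4pos}. I expect the main obstacle to be the second step: one must rule out that the flow determined by $\phi$ escapes to infinity without reaching a sink, which is precisely where discreteness of the length spectrum (properness of $\phi$) enters; alternatively one can appeal to the structural results on infinite descending rays in \cite{tan_gen,mal_ont}. The remaining two steps are routine given Section \ref{s:char} and the trace--length dictionary.
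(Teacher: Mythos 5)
Your reduction is correct and is essentially the route the paper takes: identify the holonomy character with a positive $\Um$--Markoff map for $\Um = GT(\Ut)\in U$, locate a sink, apply Theorem \ref{thm:sink4pos}, and translate back through the trace--length formula. The one place where you go beyond what the paper actually writes is the existence of a sink: the paper simply declares the statement a ``direct consequence'' of Theorem \ref{thm:sink4pos}, implicitly relying on the descending-flow dichotomy used in Theorem \ref{thm:region} (either the flow terminates at a sink, or there is an infinite descending ray forcing some region to take a value $<2$, which is impossible for a map with values in $[2,+\infty[$). Your alternative --- minimising $F(v)=\phi(X_1)+\phi(X_2)+\phi(X_3)$ over vertices and using the edge relation to show the minimiser is a sink --- is a clean and correct substitute, and your verification that the arrow convention matches (the arrow on $X_1\cap X_2$ points toward $v^*$ precisely when $\phi(X_3')\geq\phi(X_3)$, which minimality of $F$ gives) is right. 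It does, however, lean on properness of $\phi$, i.e.\ discreteness of the simple length spectrum; this is standard for complete hyperbolic surfaces but deserves a word of justification in the presence of conical singularities (angles $<\pi$ suffice), whereas the descending-ray route of \cite{tan_gen,mal_ont} sidesteps properness entirely and is what the paper's logic tacitly invokes. Two minor remarks: the polynomial in the statement carries a sign typo ($-s$ should read $+s$, consistently with Theorem \ref{thm:sink4pos} and Theorem \ref{introthm:sink2}), and you correctly work with the $+s$ version; and your check that $T_{\Um}\geq 2$ reproduces the computation $P(2)=s-4-2(\lambda_1+\lambda_2+\lambda_3)<0$ already present in the proof of Theorem \ref{thm:sink4pos}.
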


	\subsubsection{Quasi-Fuchsian representations}\

	We can consider Quasi-Fuchsian representations of a four-punctured sphere, where each boundary is sent to a parabolic element. In that case we can assume without loss of generalities that $a=b=c=d=2$. The Markoff equation becomes :
	$$x^2+y^2+z^2-xyz +8x + 8y + 8z = -28$$
	We define an auxillary map $\hat \phi : \Omega \rightarrow \C$ by $ \hat \phi (X) = \phi (X) + 2$. This gives a map with properties that are slightly different from the initial Markoff map. If we denote $\hat x = \hat \phi (X)$ and so on,  the vertex relation and the edge relation become :
	\begin{equation} (\hat x + \hat y +\hat z)^2  = \hat x \hat y \hat z \end{equation}
	\begin{equation}\dfrac{\hat x + \hat y + \hat z}{\hat x \hat y} + \dfrac{\hat x + \hat y +\hat z'}{\hat x \hat y} = 1\end{equation}
	
	This new map is not a Markoff map, but we can still consider the orientation of the edges in $E (\mathcal{T})$ given by the modulus of this map. Hence we can still consider sinks for the map $\hat \phi$ in this context. 
	
	\begin{Lemma}
		Let $X, Y, Z$ be three regions meeting at a vertex and assume that is is a sink for the map $\hat \phi$. Then 
		$$ \min \{ |\hat x |, | \hat y | , | \hat z |\} \leq 9 $$
	\end{Lemma}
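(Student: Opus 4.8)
The plan is to reduce the statement to Bowditch's computation of the sink constant for the classical Markoff equation, which by Theorem~\ref{thm:sink1} applied with $\mu=0$ gives $M(0,0,0,0)=|t_0|=3$. The key observation is that the ``hatted'' vertex relation $(\hat x+\hat y+\hat z)^2=\hat x\hat y\hat z$ becomes, after extracting square roots, the classical Markoff equation $a^2+b^2+c^2=abc$, and that under the same substitution the $\hat\phi$--sink conditions become exactly the classical ones.

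First I would dispose of the degenerate case: if one of $\hat x,\hat y,\hat z$ vanishes, the vertex relation forces $\hat x+\hat y+\hat z=0$, and then the minimum is $0\le 9$. So assume $\hat x,\hat y,\hat z$ are all nonzero and choose square roots $a,b,c$ with $a^2=\hat x$, $b^2=\hat y$, $c^2=\hat z$. The vertex relation gives $(a^2+b^2+c^2)^2=(abc)^2$, hence $a^2+b^2+c^2=\eta\,abc$ with $\eta\in\{\pm1\}$; replacing $c$ by $-c$ if necessary (which changes neither $\hat z$ nor $|c|$) we may assume $a^2+b^2+c^2=abc$, so $(a,b,c)$ is a $(0,0,0,0)$--Markoff triple and hence extends to a $(0,0,0,0)$--Markoff map $\psi$ taking the values $a,b,c$ on the regions $X,Y,Z$ around $v$. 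Next I would translate the sink conditions. From the edge relation, which reads $\hat z+\hat z'=\hat x\hat y-2(\hat x+\hat y)$, together with the vertex relation, one obtains the clean identity $\hat z'=(\hat x+\hat y)^2/\hat z$ (and the analogous formulas for $\hat x'$, $\hat y'$). Hence the arrow on the edge $X\cap Y$ points towards $v$ if and only if $|\hat z|\le|\hat z'|$, i.e.\ $|c|^2\le|a^2+b^2|$; and since the Markoff equation gives $a^2+b^2=abc-c^2=c(ab-c)$, this is equivalent to $|c|\le|ab-c|$, which is precisely the classical sink condition $|\psi(Z)|\le|\psi(X)\psi(Y)-\psi(Z)|$. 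The other two arrows translate in the same way into $|a|\le|bc-a|$ and $|b|\le|ca-b|$. Thus $v$ is a sink for $\psi$, so Theorem~\ref{thm:sink1} (equivalently, Bowditch \cite{bow_mar}) yields $\min\{|a|,|b|,|c|\}\le 3$, whence
$$\min\{|\hat x|,|\hat y|,|\hat z|\}=\bigl(\min\{|a|,|b|,|c|\}\bigr)^2\le 9 .$$
The bound is sharp, realized by $\hat x=\hat y=\hat z=9$, which satisfies the vertex relation ($27^2=9^3$) and the sink conditions ($9\le 18$).

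I do not expect a genuine obstacle here: the whole proof is the observation that the substitution $\hat x=a^2$ conjugates the ``hatted'' Markoff structure attached to the four-punctured sphere with parabolic ends onto the $\mu=0$ Markoff structure, combined with the elementary fact that $\min$ commutes with squaring of moduli. The only points that require care are the (harmless) choice of branches and sign for the square roots, and the verification that the arrow conditions match \emph{exactly} rather than approximately — and that rests entirely on the identity $\hat z'=(\hat x+\hat y)^2/\hat z$, which is a one-line consequence of the edge and vertex relations.
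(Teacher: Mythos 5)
Your proof is correct, and it takes a genuinely different route from the one in the paper. The paper argues directly on the hatted quantities: it sets $p,q,r$ to be the real parts of $(\hat x+\hat y+\hat z)/(\hat y\hat z)$ and its two companions, observes that the vertex relation forces $p+q+r=1$ while the sink conditions force $p,q,r\le\frac12$, deduces $pq\ge\frac19$ for the two largest, and concludes since the product of the two corresponding complex numbers is exactly $1/\hat z$. Your argument instead uncovers the structural reason for the answer $9=3^2$: the substitution $\hat x=a^2$ (i.e.\ $x+2=\tr(A)^2$, the classical relation between type-preserving characters of the four-punctured sphere and of the once-punctured torus) carries the hatted vertex and edge relations onto the $\Um=(0,0,0,0)$ Markoff structure, and the identity $\hat z'=(\hat x+\hat y)^2/\hat z$ — which is indeed a one-line consequence of the two relations and which the paper's computation also implicitly rests on — shows that the arrow assignments match exactly, so Bowditch's sink constant $|t_0|=3$ squares to give the bound. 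All the delicate points (the sign normalization $a^2+b^2+c^2=abc$, the degenerate case $\hat z=0$, the equivalence $|c|^2\le|a^2+b^2|\Leftrightarrow|c|\le|ab-c|$ via $a^2+b^2=c(ab-c)$) are handled correctly. What each approach buys: the paper's is self-contained and shorter to write down; yours explains where $9$ comes from, makes the sharpness at $(\hat x,\hat y,\hat z)=(9,9,9)$ (i.e.\ the Markoff triple $(3,3,3)$) transparent, and suggests that other identities between relative character varieties could be exploited in the same way.
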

	
	\begin{proof} 
	
	Let $p = \Re \left( \dfrac{\hat x + \hat y + \hat z}{\hat z \hat y} \right), q = \Re \left( \dfrac{\hat x + \hat y + \hat z}{\hat x \hat z} \right), r = \Re \left( \dfrac{\hat x + \hat y + \hat z}{\hat x \hat y} \right)$, and assume without loss of generalities that $p \geq q \geq r$. At a sink, we have $p,q,r \leq \frac 12$. The vertex equation states that $p+q+r = 1$ and hence we know that $p \geq \frac 13$. Similarly, we get that $q \geq \frac 12 (1-p)$. So we have 
	$$pq \geq \frac 12 p (1-p) \geq \frac 12 \frac 13 \frac 23 = \frac 19$$
	
	Now we have that 
	$$\frac{1}{| \hat z|} =\left| \left( \dfrac{\hat x + \hat y + \hat z}{\hat z \hat y} \right)  \left( \dfrac{\hat x + \hat y + \hat z}{\hat x \hat z} \right) \right| \geq pq \geq \frac 19$$
	Which means that $|\hat z | \leq 9$.
	\end{proof}

	From this we can deduce the systolic inequality of Theorem \ref{introthm:sys2}.(5).
	
	\begin{Proposition}
		Let $\rho$ denote a Quasi-Fuchsian representation for a four-punctured sphere and $X_\rho$ the corresponding hyperbolic $3$-manifold, then
		\begin{equation} \rm{sys} (X_\rho ) \leq 2 \rm{arccosh} \left( \frac 72 \right) \end{equation}
		In particular the maximum of the systole function over the moduli space of all hyperbolic four-punctured sphere is $2 \rm{arccosh} \left( \frac 72 \right)$.
	\end{Proposition}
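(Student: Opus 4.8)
The plan is to read the systolic bound off the preceding Lemma, first on the Fuchsian locus and then in general, and finally to check sharpness. Consider first the holonomy $\rho$ of a complete hyperbolic four-punctured sphere $S$. Its Markoff map $\phi$ is positive, so the auxiliary map $\hat\phi=\phi+2$ takes real values $\ge 4$ on every region, and if $\gamma_X$ denotes the simple closed curve dual to the region $X$, then $\hat\phi(X)=|\tr(\rho(\gamma_X))|+2=2\cosh\!\big(\ell(\gamma_X)/2\big)+2$ by \eqref{eqn:sys-tys}; equivalently
$$\cosh\!\left(\frac{\ell(\gamma_X)}{2}\right)=\frac{\hat\phi(X)}{2}-1 .$$
So it suffices to produce a region $X$ with $\hat\phi(X)\le 9$.

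To produce such a region I would run the trace-reduction flow of Theorem \ref{thm:region} along the tree $\Sigma$, but using the orientation induced by $\hat\phi$. Since $\phi$ is positive this orientation coincides with the one induced by $\phi$, and since the holonomy of a finite-area complete hyperbolic structure gives a proper Markoff map, a Bowditch-type argument (as in the proof of Theorem \ref{thm:region}) rules out an infinite descending ray, so the flow must terminate at a sink $v$. The three regions meeting at $v$ form a sink for $\hat\phi$, so the Lemma above provides one of them, say $X$, with $\hat\phi(X)\le 9$; combined with the displayed identity this gives $\cosh(\mathrm{sys}(S)/2)\le 7/2$, which is the assertion for hyperbolic four-punctured spheres. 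For a genuine quasi-Fuchsian $3$-manifold $X_\rho$, the statement then follows from the Fuchsian case via the standard comparison $\mathrm{sys}(X_\rho)\le\mathrm{sys}(S_\pm)$ between the systole of $X_\rho$ and the systoles of the two components $S_\pm$ of its conformal boundary, each of which is a hyperbolic four-punctured sphere.

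Finally, to see that the bound is attained, I would exhibit the most symmetric hyperbolic four-punctured sphere, whose holonomy corresponds to the triple $-\chi(\rho)=(7,7,7)$: its three simple closed curves separating the cusps in pairs all have length $\ell$ with $\cosh(\ell/2)=7/2$, and the associated map has $\hat\phi(X_i)=9$ at the central vertex $v_0$. One checks directly that $v_0$ is a sink and that $\hat\phi\ge 9$ on every region — equivalently that every simple closed curve satisfies $|\tr\rho(\cdot)|\ge 7$ — so its systole equals $2\,\mathrm{arccosh}(7/2)$, which is therefore the maximum of the systole over the moduli space. I expect the main difficulty to lie in the step asserting that the $\hat\phi$-flow terminates at a sink: $\hat\phi$ is not itself a Markoff map, so the properness and no-descending-ray inputs of Bowditch and of Theorem \ref{thm:region} must be transferred to it, which is exactly why it is convenient to stay on the Fuchsian locus, where $\hat\phi$ and $\phi$ define the same orientation; the comparison $\mathrm{sys}(X_\rho)\le\mathrm{sys}(S_\pm)$ needed for the full $3$-dimensional statement is a more routine point.
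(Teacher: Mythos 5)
Your Fuchsian half is essentially the paper's argument: apply the sink Lemma to $\hat\phi=\phi+2$, note $\hat\phi(X)=2\cosh(\ell(\gamma_X)/2)+2$ on the Fuchsian locus, and conclude $\cosh(\mathrm{sys}/2)\le 7/2$; your concern about the flow terminating at a sink is legitimate but is resolved by the Bowditch/Tan--Wong--Zhang properness (BQ) conditions, which in fact hold for \emph{all} quasi-Fuchsian representations, not just Fuchsian ones, so the sink Lemma could be applied directly to the complex map. The sharpness discussion via the $(7,7,7)$ triple also matches the paper (the paper is terser, merely checking it is a sink; your additional claim that $\hat\phi\ge 9$ on every region follows from the standard monotone growth of a positive Markoff map away from a sink and is what is actually needed for the ``maximum over moduli space'' clause).

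The genuine gap is your reduction of the quasi-Fuchsian case to the Fuchsian one. You invoke $\mathrm{sys}(X_\rho)\le\mathrm{sys}(S_\pm)$ with $S_\pm$ the components of the \emph{conformal} boundary; with constant $1$ this is not a standard fact --- Bers' inequality only gives $\ell_{X_\rho}(\gamma)\le 2\,\ell_{S_\pm}(\gamma)$, and the comparison between the conformal boundary and the manifold loses a Lipschitz constant strictly greater than $1$. The statement that is true with constant $1$ uses the \emph{convex core} boundary: each component of $\partial C(X_\rho)$ is a pleated surface, hence intrinsically a complete hyperbolic four-punctured sphere, and its $\pi_1$-isomorphic, $1$-Lipschitz inclusion into $X_\rho$ gives $\ell_{X_\rho}(\gamma)\le\ell_{\partial C}(\gamma)$ for every essential simple closed curve; substituting this for the conformal boundary repairs your argument. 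Be aware, though, that this imports exactly the three-dimensional hyperbolic geometry the paper advertises avoiding: the paper instead applies the complex sink Lemma directly to the Markoff map of the quasi-Fuchsian representation, producing a simple closed curve with $|2+2\cosh(L/2)|\le 9$ for its complex translation length $L=l+i\theta$, and then converts this to a bound on the real length $l$. That conversion step is delicate (since $|\cosh(L/4)|\le\cosh(l/4)$, the naive estimate runs in the wrong direction), which is a genuine reason to prefer a reduction to the real/Fuchsian case as you do --- but then the reduction itself must be the convex-core one, not the conformal-boundary one.
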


	 \begin{proof}

	 the previous Lemma implies that there exists a simple closed curve such that $|2 + 2 \cosh (\frac L2 ) | < 9$. As $x = 2 \cosh \left( \frac L2 \right)$ with $L = l+i\theta$ the complex translation length of $\rho (X)$, we can infer that 
	$$\left| 2+2\cosh \left( \frac L2 \right) \right| = 4 \left| \cosh^2 \left( \frac L4 \right) \right| < 4 \left| \cosh^2 \left( \frac l4 \right) \right|.$$
	Finally, we obtain :
	$$|l| \leq 4 \rm{arccosh} \left( \frac 32 \right) = 2 \rm{arccosh} \left( \frac 72 \right) $$
	
	To prove equality, we consider the Markoff triple $(7,7,7)$, which is a $(8,8,8,-28)$-Markoff triple, which is a sink in the corresponding Markoff map. 
	\end{proof}

\subsection{Closed non-orientable surface of genus 3}\

We end this section with results concerning the trace systole of representations of $\pi_1 (N_3)$ that will allow us to deduce Theorem \ref{introthm:sys2}.(6).


		\begin{Theorem}\label{thm:tysN3}
			We have :
			
			$$ \mathrm{Tys} (N_3) = \sqrt{3+\sqrt{17}}$$
		
			In addition, for any $\rho : \pi_1 (N_3) \rightarrow \rm{SL} (2, \C)$, there exists a $1$-sided simple closed curve such that $|\tr (\rho (\gamma))| \leq \mathrm{Tys} (N_3)$.
		\end{Theorem}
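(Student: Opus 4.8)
The plan is to translate the statement about $\mathrm{Tys}(N_3)$ into a statement about sink constants of $(0,0,0,\mu)$–Markoff maps, using the parametrization of $\X(\Gamma_3)$ developed in Section \ref{s:char}, and then apply Theorem \ref{thm:sink1} (or its reformulation Theorem \ref{thm:region}) together with the structure of the curve complex of $N_3$. First I would restrict attention to representations with $d = \tr(\rho(\delta)) \neq 0$; the case $d = 0$, where $\X(\Gamma_3)$ is cut out by $a^2+b^2+c^2 = 4$, must be handled separately but is easy since then at least one of $|a|,|b|,|c|$ is small (in fact $\leq 2$), giving a $1$-sided curve with $|\tr| \leq 2 \leq \sqrt{3+\sqrt{17}}$. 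For $d \neq 0$, the Proposition in Section \ref{s:char} identifies $(\tfrac{cd}{2}, \tfrac{ad}{2}, \tfrac{bd}{2})$ with a $(0,0,0,d^2)$–Markoff triple; the subcomplex of non-orientable $1$-sided curves is the Farey graph, so the associated Markoff map $\phi$ has its regions indexed by exactly these $1$-sided curves, and $\phi(X) = \tfrac{d}{2}\tr(\rho(\gamma_X))$ for the corresponding curve $\gamma_X$.

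Next I would apply Theorem \ref{thm:region} with $\mu = d^2$: provided $d^2 \neq 4$, there is a region $X$ with $|\phi(X)| \leq |t_{d^2}|$, hence a $1$-sided simple closed curve $\gamma$ with $|\tr(\rho(\gamma))| \leq \tfrac{2}{|d|}|t_{d^2}|$. So I would study the function $d \mapsto \tfrac{2}{|d|}|t_{d^2}|$ and show its supremum over $d \in \C$ equals $\sqrt{3+\sqrt{17}}$. Writing $t = t_{d^2}$ with $t^3 - 3t^2 + d^2 = 0$, i.e. $d^2 = t^2(3-t)$, the quantity to bound is $\tfrac{4|t|^2}{|d|^2} = \tfrac{4|t|^2}{|t|^2|3-t|} = \tfrac{4}{|3-t|}$, valid on the locus where $t$ is the dominant root (recall $\Re(t) \geq 2$ from Lemma \ref{lem:Retmu}). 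Maximizing $\tfrac{4}{|3-t|}$ means minimizing $|3-t|$ subject to $t$ being dominant among the three roots of $X^3 - 3X^2 + d^2$; the constraint from the other roots being dominated forces $t$ to stay on or outside a certain curve, and the extremal $t$ should be where two roots have equal modulus. A short computation (the extremal case is $t$ real, $t = \tfrac{1+\sqrt{17}}{2}$, coming from a double-root-in-modulus configuration, so that $|3-t| = \tfrac{5-\sqrt{17}}{2}$ and $\tfrac{4}{|3-t|} = \tfrac{8}{5-\sqrt{17}} = \tfrac{8(5+\sqrt{17})}{8} = 5+\sqrt{17} = 3+\sqrt{17}+2$; hence $\tfrac{2}{|d|}|t_{d^2}| = \sqrt{3+\sqrt{17}}$) pins down the value. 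Conversely, to see this bound is attained, I would exhibit an explicit representation: take the Markoff map $\phi \equiv t$ constant equal to this extremal $t$ on the three regions around a sink, reconstruct $a,b,c,d$ via $d^2 = t^2(3-t)$ and $a=b=c = \tfrac{2t}{d}$, check it lies in $\mathfrak{N}$, and verify every $1$-sided curve then has $|\tr| \geq \sqrt{3+\sqrt{17}}$ using the sink structure and the bound $|\tr| \geq 2$ on other regions (since $\phi \equiv t$ with $|t| \geq 2$ propagates by the edge relation to give all regions modulus $\geq 2$, as in the optimality argument in the proof of Theorem \ref{thm:sink1}); one also checks the finitely many other curve types ($2$-sided and the orientable $1$-sided $\delta$) do not give a smaller trace, which is automatic here since the theorem only claims the infimum over \emph{all} simple closed curves equals the value realized by a $1$-sided one.

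The main obstacle I expect is the optimization step: carefully characterizing, among all $d \in \C$, exactly when $t_{d^2}$ is the dominant root and locating the minimum of $|3 - t_{d^2}|$ on that locus. One must be careful that the naive minimization of $4/|3-t|$ over all roots of cubics of the form $X^3-3X^2+\mu$ is constrained — $t$ must actually be dominant, not just a root — and the extremum occurs precisely at the boundary of the dominance region, where two of the three roots are equal in modulus; ruling out that a complex (non-real) configuration does better requires the kind of Möbius/circle argument already used in the proof of Lemma \ref{lem:sink1}. The attainment direction also requires verifying that the constant Markoff map built from the extremal $t$ genuinely corresponds to a point of $\X(\Gamma_3)$ and that no other type of simple closed curve on $N_3$ undercuts the bound, which uses the explicit description of the curve complex of $N_3$ from Section \ref{s:char}.
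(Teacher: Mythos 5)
Your reduction to the Markoff-map picture and the identity $\tfrac{4|t_{d^2}|^2}{|d|^2} = \tfrac{4}{|3-t_{d^2}|}$ (from $d^2 = t^2(3-t)$) are fine, but the central optimization step fails: the supremum of $d \mapsto \tfrac{2}{|d|}|t_{d^2}|$ over $d \in \C\setminus\{0\}$ is not $\sqrt{3+\sqrt{17}}$ --- it is infinite. Indeed $t=3$ is itself the dominant root of $X^3-3X^2$ (roots $0,0,3$), so for $\mu=d^2$ near $0$ the dominant root is near $3$ and $|3-t_{d^2}|$ is arbitrarily small; equivalently $\tfrac{2}{|d|}|t_{d^2}|\approx \tfrac{6}{|d|}\to\infty$ as $d\to 0$. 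So minimizing $|3-t|$ over the dominance locus gives $0$, not $\tfrac{5-\sqrt{17}}{2}$: your claimed extremal point $t=\tfrac{1+\sqrt{17}}{2}$ corresponds to $\mu=7-\sqrt{17}$, which is an interior point of the dominance region (the other two roots are real of moduli roughly $1.3$ and $0.9$, nowhere near $|t|$), not a boundary configuration. Moreover, even at that point your arithmetic gives $\tfrac{4|t|^2}{|d|^2}=5+\sqrt{17}$, i.e.\ $\tfrac{2|t|}{|d|}=\sqrt{5+\sqrt{17}}\neq\sqrt{3+\sqrt{17}}$; the step ``$5+\sqrt{17}=3+\sqrt{17}+2$, hence $\sqrt{3+\sqrt{17}}$'' is a non sequitur.

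The missing idea is that the orientable $1$-sided curve $\delta$ is itself an admissible witness, with $|\tr(\rho(\delta))|=|d|$, so the quantity to bound is $\min\bigl(|d|,\tfrac{2}{|d|}|t_{d^2}|\bigr)$ rather than $\tfrac{2}{|d|}|t_{d^2}|$ alone. When $|d|\le\sqrt{3+\sqrt{17}}$ you are done via $\delta$; this also disposes of $d=0$, where your claim that $a^2+b^2+c^2=4$ forces some $|a|,|b|,|c|\le 2$ is false over $\C$ (take $a=b=10$, $c=14i$). When $|d|^2\ge 3+\sqrt{17}$ one gets $|3-t|\ge\tfrac{\sqrt{17}-3}{2}$ for \emph{every} root $t$, with no dominance analysis, from $|d|^2=|t|^2|3-t|\le(3+|3-t|)^2|3-t|$ and the monotonicity of $s\mapsto(3+s)^2s$ (which equals $3+\sqrt{17}$ exactly at $s=\tfrac{\sqrt{17}-3}{2}$); hence $\tfrac{4}{|3-t|}\le\tfrac{8}{\sqrt{17}-3}=3+\sqrt{17}$. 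This is essentially the paper's route (it only treats $|d|>\sqrt{3+\sqrt{17}}$, leaving the complementary case to $\delta$), and it locates the true extremal configuration at $t=\tfrac{3+\sqrt{17}}{2}$, $\mu=-(3+\sqrt{17})$, $d=i\sqrt{3+\sqrt{17}}$ --- precisely where $|d|$ and $\tfrac{2|t_{d^2}|}{|d|}$ balance. Your attainment example inherits the same defect: building $a=b=c=\tfrac{2t}{d}$ from $t=\tfrac{1+\sqrt{17}}{2}$ gives $|d|=\sqrt{7-\sqrt{17}}\approx 1.70$, so $\delta$ already has trace well below $\sqrt{3+\sqrt{17}}$ and that representation does not realize the claimed maximum.
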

	
	\begin{proof}
		Let $\rho$ be such a representation. Recall that if $d  \neq 0$, then $(\frac{cd}{2},  \frac{ad}{2} , \frac{bd}{2})$ is a $(0,0,0,d^2)$-Markoff triple and hence corresponds to a $d^2$-Markoff map, denoted $\phi$.
		
		Assume that $|d| > \sqrt{3+\sqrt{17}}$. From Theorem \ref{thm:sink1}, there exists an element $X \in \Omega$ such that $|\phi (X)| \leq | t_{d^2} |$. 
		
		The maximal value of $t_{\mu}$ for $\mu \in \C$ with $|\mu| = r$ occurs for $\mu = -r$. This means that $|t_{d^2} | \leq |t_{-|d|^2}|$. Moreover, as the function $\mu \rightarrow t_\mu$ is decreasing on $\R_{<0}$, we get that $|t_{d^2} | < t_{-3-\sqrt{17}}$.
		
		A simple computation gives that $t_{-3-\sqrt{17}} = \dfrac{3+\sqrt{17}}{2}$. And hence we have 
		$$|\phi(X)| < \dfrac{3+\sqrt{17}}{2}.$$
		 So there exists a $1$-sided simple closed curve $\gamma$ such that $\phi (X) = \tr (\rho (\gamma)) \frac{d}{2}$. Which means that 
		$$\tr (\rho (\gamma)) \leq \dfrac{3+\sqrt{17}}{2} \dfrac{2}{\sqrt{3+\sqrt{17}}} < \sqrt{3+\sqrt{17}}$$
		and this ends the proof of the inequality.
		
		The representation given by the character $(it,it,it,it) \in \mathfrak{N}$ with $t = \sqrt{3+\sqrt{17}}$ satisfies $\rm{tys} (\rho) = t$, which proves that the constant $t$ is optimal.
	\end{proof}


	When one restricts to quasi-fuchsian representation of $N_3$, we can recover Theorem \ref{introthm:sys2}.(6).
	
	\begin{Proposition}
		Let $\rho$ be a Quasi-Fuchsian representation for the surface $N_3$, and $X_\rho$ the corresponding hyperbolic $3$-manifold. Then 
		$$\rm{sys} (X_\rho) \leq \cosh^{-1} \left( \dfrac{5+\sqrt{17}}{2} \right) $$
	\end{Proposition}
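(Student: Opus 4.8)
The plan is to transfer the trace-systole bound $\mathrm{Tys}(N_3) = \sqrt{3+\sqrt{17}}$ from Theorem \ref{thm:tysN3} into a length bound for the hyperbolic $3$-manifold $X_\rho$, using the fact that one-sided curves on $N_3$ are sent by the holonomy to orientation-reversing isometries, so that their complex length is governed by $\mathrm{arcsinh}$ rather than $\mathrm{arccosh}$. Concretely, Theorem \ref{thm:tysN3} guarantees a \emph{one-sided} simple closed curve $\gamma$ with $|\tr(\rho(\gamma))| \leq \sqrt{3+\sqrt{17}}$, and for such a curve the relevant relation is $l_{X_\rho}(\gamma) = 2\,\mathrm{arcsinh}\!\left(\tfrac{|\tr(\rho(\gamma))|}{2}\right)$ (or its quasi-Fuchsian analogue in terms of complex translation length, as in the four-punctured sphere argument above). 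So the first step is to observe $\mathrm{sys}(X_\rho) \leq l_{X_\rho}(\gamma) \leq 2\,\mathrm{arcsinh}\!\left(\tfrac{\sqrt{3+\sqrt{17}}}{2}\right)$.

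The second step is the elementary identity $2\,\mathrm{arcsinh}(u) = \mathrm{arccosh}(1+2u^2)$, valid for $u \geq 0$, which follows from $\cosh(2\,\mathrm{arcsinh}(u)) = 1 + 2\sinh^2(\mathrm{arcsinh}(u)) = 1 + 2u^2$. Applying this with $u = \tfrac{1}{2}\sqrt{3+\sqrt{17}}$, so that $2u^2 = \tfrac{3+\sqrt{17}}{2}$, gives $2\,\mathrm{arcsinh}(u) = \mathrm{arccosh}\!\left(1 + \tfrac{3+\sqrt{17}}{2}\right) = \mathrm{arccosh}\!\left(\tfrac{5+\sqrt{17}}{2}\right)$, which is exactly the claimed bound. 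For the quasi-Fuchsian (rather than Fuchsian) case one should first reduce, as in the four-punctured sphere proposition, to the real translation length: writing the complex translation length as $L = l + i\theta$ and using $|\sinh(L/2)|^2 \geq \sinh^2(l/2)$ (equivalently $|\cosh(L/2)|^2 \geq \cosh^2(l/2)$, monotonicity in the real part), one gets $|\tr(\rho(\gamma))| = 2|\sinh(L/2)| \geq 2\sinh(l/2)$, hence $\sinh(l/2) \leq \tfrac{1}{2}\sqrt{3+\sqrt{17}}$ and the same conclusion follows.

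The only genuine subtlety — and the step I would be most careful about — is making sure the sign/reality conventions in the trace–length dictionary are applied correctly for a one-sided curve in a quasi-Fuchsian (complex) holonomy: one must confirm that the element $\rho(\gamma) \in \mathrm{SL}(2,\C)$ lifting an orientation-reversing isometry does contribute to the $3$-manifold's geodesic spectrum via $\mathrm{arcsinh}$ of (half of) its trace, i.e. that $\gamma^2$ (which is two-sided, with $\tr(\rho(\gamma^2)) = \tr(\rho(\gamma))^2 - 2$) corresponds to a closed geodesic of length $l_{X_\rho}(\gamma)$ traversed appropriately. Granting this — it is the standard picture for twisted $I$-bundles — the computation above closes the argument, and equality is attained by the representation with character $(it,it,it,it)$, $t = \sqrt{3+\sqrt{17}}$, already exhibited in the proof of Theorem \ref{thm:tysN3}.
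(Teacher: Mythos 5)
Your argument is correct and takes essentially the same route as the paper: both extract from Theorem \ref{thm:tysN3} a one-sided simple closed curve with $|\tr(\rho(\gamma))|^2 \leq 3+\sqrt{17}$ and convert via $4\sinh^2(l/2)=2\cosh(l)-2$ (your identity $2\,\mathrm{arcsinh}(u)=\mathrm{arccosh}(1+2u^2)$ is the same computation), down to the same sharpness example $(it,it,it,it)$. One inessential slip: your parenthetical claim that this is ``equivalently $|\cosh(L/2)|^2 \geq \cosh^2(l/2)$'' has the inequality reversed, since $|\cosh(l/2+i\theta/2)|^2 = \cosh^2(l/2)-\sin^2(\theta/2)$; but the inequality you actually use, $|\sinh(L/2)|^2 = \sinh^2(l/2)+\sin^2(\theta/2) \geq \sinh^2(l/2)$, is correct and is all that is needed.
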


	\begin{proof}
	From the previous theorem, there exists a $1$-sided simple closed cuve such that $|\tr (\rho (\gamma))|^2 \leq  3 + \sqrt{17}$. As we have $|\tr (\rho(\gamma)) |^2 = \left| 2 \sinh \left( \frac{l_\gamma (X)}{2} \right) \right| ^2 = | 2 \cosh (l_\gamma (X) ) -2 |$ we get that 
	$$\cosh (l_\gamma (X)) \leq \dfrac{5+\sqrt{17}}{2}$$
	
	\end{proof}

Note that this inequality was already determined by Gendulphe  \cite{gen_pay} in the case of hyperbolic surfaces.

\section{Trace systole in non-Fuchsian components of $\Sigma_2$}\label{s:bow}

In this last section, we apply our results to show that a representation of the genus 2 surface with euler class $\pm 1$ sends a simple closed curve on a non-hyperbolic element. This result was already proven by Marche and Wolff \cite{mar-wol}, but their proof is using the explicit value of the Bers constant in genus 2 and results on domination of non-Fuchsian representation by Fuchsian ones, combined with some computations in hyperbolic geometry. The proof we give is completely independent and we hope that it can be generalized in higher genus.

	\begin{Theorem}
		Let $\rho : \pi_1 (\Sigma_2) \rightarrow \mathrm{PSL} (2 , \R)$ be a representation with Euler class $\pm 1$. Then there exists a simple closed curve $\gamma \in \pi_1 (\Sigma_2)$ such that $|\tr (\rho (\gamma)) | \leq 2$.
	\end{Theorem}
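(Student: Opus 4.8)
The plan is to argue by contradiction. Suppose that every essential simple closed curve $\gamma$ on $\Sigma_2$ satisfies $|\tr(\rho(\gamma))| > 2$, i.e. that $\rho$ sends every simple closed curve to a hyperbolic element; I will derive a contradiction with $e(\rho) = \pm 1$. By the Milnor--Wood inequality $|e(\rho)| \le |\chi(\Sigma_2)| = 2$, and after replacing $\rho$ by its conjugate under an orientation-reversing isometry (which leaves every trace unchanged but negates the Euler class) we may assume $e(\rho) = 1$. Fix a separating simple closed curve $c$ cutting $\Sigma_2$ into two one-holed tori $T_1, T_2$, and write $\pi_1(T_i) = \langle \a_i , \b_i \rangle$ with $[\a_i , \b_i]$ freely homotopic to $c$. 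Since $\rho(c) = \rho([\a_1,\b_1])$ is hyperbolic, the relative Euler numbers $e_i := e(\rho|_{T_i})$ are well defined, satisfy $|e_i| \le 1$, and are additive under the gluing along a curve with hyperbolic holonomy, so $e_1 + e_2 = e(\rho) = 1$. Hence, after relabelling, $e_1 = 1$ and $e_2 = 0$.

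The next step uses Goldman's maximality theorem: a representation of a one-holed torus with hyperbolic boundary and relative Euler number $\pm 1$ is the holonomy of a complete hyperbolic structure with geodesic boundary. Thus $\rho|_{T_1}$ is Fuchsian, $T_1$ carries a hyperbolic metric whose geodesic boundary $c$ has some length $\ell > 0$, and $\tr(\rho(c)) = -2\cosh(\ell/2) < -2$. Consequently $\rho|_{T_2}$ lies in the relative character variety $\mathcal{X}_k(T_2)$ with $k = \tr(\rho(c)) < -2$ and has relative Euler number $0$. Now transport this to the Markoff picture: via the correspondence of Section~\ref{s:char}, $\rho|_{T_2}$ determines a real $(0,0,0,\mu)$-Markoff map $\phi$ with $\mu = k+2 < 0$, and the standing assumption forces $|\phi(X)| > 2$ for every region $X$. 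Running the trace-reduction flow of Theorem~\ref{thm:region} on $\phi$, either $\phi$ has an infinite descending ray --- in which case some region has $|\phi(X)| < 2$, i.e. some simple closed curve of $T_2 \subset \Sigma_2$ has trace of modulus $< 2$, a contradiction --- or $\phi$ has a sink; in the sink case one combines Theorem~\ref{thm:sinkreal} and Proposition~\ref{prop:realroots} with the structure of $\mathcal{X}_k(T_2)$ for $k<-2$ (and with Theorem~\ref{thm:sysT11NF}, which pins down exactly which values of $k$ support non-Fuchsian representations of the one-holed torus) to conclude that $\rho|_{T_2}$ is itself forced to be Fuchsian, so $e_2 = \pm 1$, contradicting $e_2 = 0$. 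Either way we reach a contradiction, so $\rho(c)$ cannot be hyperbolic and $\gamma = c$ is the required curve.

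I expect the delicate point to be precisely this last step: showing that a representation of the one-holed torus with $\tr(\partial) < -2$ (equivalently, a real Markoff map with $\mu < 0$ all of whose values have modulus $>2$) is necessarily Fuchsian, with relative Euler number $\pm1$. This is where the trace-systole results for the one-holed torus are genuinely used, and a parallel route --- cutting $\Sigma_2$ along two disjoint non-separating curves into a four-holed sphere and observing that $e(\rho)=1$ prevents the four boundary traces from being simultaneously large --- is where the results on $\Sigma_{0,4}$ (Theorem~\ref{introthm:sink2}) enter. The remaining ingredients (the Milnor--Wood inequality, additivity of the relative Euler number across a curve with hyperbolic holonomy, and Goldman's description of the maximal components) are standard and serve only to set up the reduction to the one-holed torus and four-holed sphere.
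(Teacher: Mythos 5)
Your reduction to a separating curve is clean up to the point where you restrict $\rho$ to the two one-holed tori and conclude $e_1=1$, $e_2=0$ and $k=\tr(\rho(c))<-2$; the problem is precisely the step you flag as delicate, and it does not go through with the tools you cite. For $k<-2$ the Markoff parameter is $\mu=k+2<0$, and Proposition \ref{prop:realroots}(1) says the unique real root $t'_\mu$ of $X^3-3X^2+\mu=0$ is \emph{greater than} $3$; Theorem \ref{thm:sinkreal} therefore only bounds the minimum at a sink by $|t'_\mu|>3$, which is perfectly compatible with every simple closed curve of $T_2$ having trace of modulus $>2$ (Fuchsian holonomies of hyperbolic one-holed tori realize exactly this). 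Theorem \ref{thm:sysT11NF} is stated and proved only for $k>2$, so it says nothing about $\mathcal{X}_k(T_2)$ with $k<-2$. Consequently neither the descending-ray alternative nor the sink alternative produces a contradiction, and the assertion that $\rho|_{T_2}$ is ``forced to be Fuchsian'' is unsupported by anything in the paper. What you would actually need is Goldman's theorem that every real character of the one-holed torus with boundary trace $<-2$ is Fuchsian (hence has relative Euler class $\pm1$); that statement is true but external to this paper, is not among the results you invoke, and if you import it the whole argument collapses to a short consequence of Goldman's classification rather than a proof by the trace-systole methods --- which is exactly the kind of input the paper is written to avoid.

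For comparison, the paper's proof does \emph{not} derive a contradiction from a separating curve. It runs a trace-reduction algorithm over pants decompositions, uses Theorem \ref{thm:sysT11NF} (legitimately, since an Euler-class-$0$ restriction to a one-holed torus with hyperbolic boundary and no short simple curve forces $k>2$, in fact $k\ge 18$) only to show that the two shortest pants curves are \emph{non-separating}, and then does the real work on the four-holed sphere obtained by cutting along those two curves: the Euler class forces boundary traces of the form $(a,a,d,-d)$, and an explicit analysis of the sink inequalities on the relative character variety of $\Sigma_{0,4}$ shows the vertex equation cannot be satisfied, with a separate limiting argument when the reduction does not terminate. Your one-sentence gesture at the four-holed sphere route does not carry out any of this, so the proposal is incomplete at its central step.
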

	
	\begin{proof} Assume by contradiction that there is a representation such that each simple closed curve is sent to an hyperbolic element in $\mathrm{PSL} (2 , \R)$.

	Recall that for each curve $\gamma$ in a pants decomposition $\mathcal{P}$ of $\Sigma_2$, we can consider the subsurface $\Sigma_{\gamma, \mathcal{P}}$ obtained by gluing back the one or two pants containing $\gamma$ along $\gamma$. This subsurface is either a four-holed sphere (if two distinct pants contain $\gamma$) or a one-holed torus (if $\gamma$ only appears in one pants).
	
	Assume first that there exists a pants decomposition $\mathcal{P}$ of $\Sigma_2$ with the following property : Each curve $\gamma$ of $\mathcal{P}$ realizes the trace systole of the restriction of the representation $\rho$ to the subsurface $\Sigma_{\gamma, \mathcal{P}}$ with the boundary data determined by the other curves of the pants decomposition. We call a pants decomposition satisfying this property a \emph{locally minimal} pants decomposition.
	
%
%
%
	
	Let $\a$ and $\d$ be the two curves of minimal length of this pants decomposition. If $\mathcal{P}$ contains a separating curve $\gamma$, then this curves separates $\Sigma_2$ into two one-holed torus $T_1$ and $T_2$ glued along $\gamma$. Let $\rho_1$ and $\rho_2$ be the restriction of the representation $\rho$ to each of these subsurface. By additivity of the Euler class, we have that one of these representations has Euler class $0$, for example $\rho_1$. By hypothesis, $\rm{tys} (\rho_1) \geq 2$, which means that $\tr (\rho (\g)) \geq 18$ by Theorem \ref{thm:sysT11NF}. But for $|\tr (\rho (\g))| \geq 18$, we know for both representations $\rho_1$ and $\rho_2$, there exists a curve whose trace is smaller than $|\tr (\rho (\g))|$ and hence $\gamma$ cannot be equal to $\alpha$ or $\beta$, and hence the curves $\a$ and $\d$ are necessarily non-separating.

We consider the four-holed sphere $S$, obtained by cutting $\Sigma_2$ along $\a$ and $\d$, and denote by $\rho'$ the representation of $\rho$ restricted to $S$. As the relative Euler class of the representation $\rho'$ restricted to $S$ is $-1$, we can assume without loss of generality that the boundary traces of this representation are given by $(a,a,d,-d)$, with $a = |\tr (\rho (\a))|$ and $d = |\tr (\rho (\d))|$. As the third curve of the pants decomposition is sent to an hyperbolic element and is the systole of the representation $\rho'$ we can consider a triple of simple closed curve $(X, Y , Z)$ in $S$, such that the corresponding vertex is a sink of the associated Markoff map, and such that $X$ is the separating curve. Without loss of generalities, up to changing signs of the generators, we can assume that $2 < a < d < z < |y|$.

The curve $X$ separates $\Sigma_2$ into two one-holed torus and the induced representations have Euler class $-1$ and $0$. Which means that $|x| > \max (a^3-3a^2+2 , d^3+3d^2-2) = \lambda$. And $(x,y,z)$ is a sink so that $|y| < |xz+y|$ and $|x| < |yz+x-(a^2-d^2)|$.

We are going to prove that $x^2+y^2+z^2+xyz-(a^2-d^2)x-(a^2-2)(d^2-2) \neq 0$ which will give a contradiction. To do so, we consider $z$ fixed and define the function :
$$f(x,y) = x^2+y^2+z^2+xyz-(a^2-d^2)x-(a^2-2)(d^2-2)$$

If $xy >0$, then $x> a^2-4$, and $d > 2$, so we have $x-(a^2-d^2) >0$. And similarly $x> (d^2-2)$ and $yz > a^2 > a^2-2$ so $xyz > (a^2-2)(d^2-2))$ . Combining these arguments we have: 
$$f(x,y) = (x^2 - (a^2 - d^2)x) +(xyz - (a^2-2)(d^2-2))) + y^2 + z^2 > 8$$
And this gives a contradiction in the first case.

So we can assume that $xy < 0$. In that case, the sink inequalities become $|2y|<|xz|$ and $|2x|<|yz-(a^2-d^2)|$. Without loss of generality, up to changing signs of the generators, we restrict our study of the function $f(x,y)$ on the domain defined by $y>0$, and the inequalities given by $2y < -xz$ and $-2x < yz - (a^2 -d^2)$. This is a convex domain whose boundary is a union of lines, and the partial derivatives of $f$ are given by :
$$\frac{\partial f }{\partial x} = 2x + yz - (a^2-d^2) >0 , \quad  \frac{\partial f }{\partial y} = 2y + xz  <0 $$

So it suffices to prove that $f(x,y)< 0 $ on the lower right corner of the domain. This corner has coordinates 
	$$(x_0 , y_0) = \left( \lambda , \max \left( z , \frac{2\lambda+(a^2-d^2)}{z}\right) \right) $$ 

We distinguish two cases depending on the value of the maximum in the second coordinate :

Case 1 : If $y_0 = z$, in which case we have $z^2 > 2 \lambda + (a^2-d^2)$.

\begin{align*}
	f(\lambda , z) & = \lambda^2+2z^2-\lambda z^2+(a^2-d^2)\lambda - (a^2-2)(d^2-2) \\
		& = -(\lambda - 2)(z^2-\lambda) + 2 \lambda + (a^2-d^2)\lambda - (a^2-2)(d^2-2) \\
		& < - (\lambda - 2) (\lambda + (a^2-d^2)) + 2\lambda + (a^2-d^2)\lambda - (a^2-2)(d^2-2) \\
		& < -(\lambda-4)+2(a^2-d^2)-(a^2-2)(d^2-2) < 0
\end{align*}

Case 2 : If $y_0 = \frac{2\lambda+(a^2-d^2)}{z}$, in which case we have $z^2 < 2 \lambda + (a^2-d^2)$

\begin{align*}
	f\left(\lambda , \frac{2\lambda+(a^2-d^2)}{z} \right)   
		 =& - \lambda^2 + z^2+ \left( \frac{2\lambda+(a^2-d^2)}{z}\right)^2 - (a^2-2)(d^2-2)\\
		 <& - \lambda^2+2\lambda + (a^2-d^2) + (2\lambda + (a^2-d^2)) \frac{2\lambda +(a^2-d^2)}{z^2} \\
		 	& - (a^2-2)(d^2-2) \\
		 <& - \lambda^2 +2 (2\lambda+(a^2-d^2))-  (a^2-2)(d^2-2) \\
		 <& - \lambda^2+4\lambda + 2 (a^2-d^2)-(a^2-2)(d^2-2) \\
		 <& \ 0
\end{align*}

So in both cases, we get the desired contradiction that $f(x,y) \neq 0$.

\medskip
%

To finish the proof, we need to consider the case where there does not exist a locally minimal pants decomposition. In that case, we consider any decomposition $\mathcal{P}_0$ and construct a sequence of pants decomposition $(\mathcal{P}_n)$ with the following property : Each pants decomposition $\mathcal{P}_{n+1}$ is constructed from $\mathcal{P}_n$ by changing one of the curve of $\mathcal{P}_n$ for a curve with a smaller trace (which can always be done as at least one of the curve does not realize the trace systole of the subsurface it defines), and keeping the two other curves of $\mathcal{P}_n$ unchanged.

 In that case, the sequences of absolute values of traces of the curves in the pants decomposition are decreasing and are bounded below by $0$, so they converge. So we can infer that for any $\varepsilon > 0$ there is exists $n$ such that the pants decomposition $\mathcal{P}_n$ has the property that each pants curve is within $\varepsilon$ of the trace systole of its corresponding subsurface.

For $\varepsilon$ sufficiently small, this is sufficient to infer that :
\begin{itemize}
	\item The two shortest curves $\a$ and $\d$ of that pants decomposition are non-separating. 
	\item In the four-holed sphere obtained by cutting along $\a$ and $\d$, we have a sink $(X,Y,Z)$ such that $X$ corresponds to a separating curve, and $2 < a < d < z+\varepsilon < |y| + \varepsilon$. 
\end{itemize}
So we can reproduce the same computations as before with an additional $\varepsilon$, and we can see that when $\varepsilon$ is small enough, the final strict inequalities still hold.

\end{proof}

	\bibliographystyle{plain}
\bibliography{sample2}

\end{document}